\newcommand\e{\varepsilon}
\def\GL{\text{GL}}
\def\supp{\text{supp}}
\def\cc{\mathbb{C}}
\def\hh{\mathbb{H}}
\def\rr{\mathbb{R}}
\def\R{\mathbb{R}}
\def\zz{\mathbb{Z}}
\def\fa{\mathfrak{a}}
\def\mcA{\mathcal A}
\def\mcB{\mathcal B}
\def\mcC{\mathcal C}
\def\mcD{\mathcal{D}}
\def\mcE{\mathcal{E}}
\def\mcF{\mathcal{F}}
\def\ff{\mathfrak{f}}
\def\fg{\mathfrak{g}}
\def\fh{\mathfrak{h}}
\def\mcH{\mathcal{H}}
\def\mcL{\mathcal L}
\def\mcM{\mathcal M}
\def\mfm{\mathfrak{m}}
\def\fn{\mathfrak{n}}
\def\mcN{\mathcal{N}}
\def\fo{\mathfrak{o}}
\def\fp{\mathfrak{p}}
\def\fq{\mathfrak{q}}
\def\mcS{\mathcal{S}}
\def\mcT{\mathcal T}
\def\mcW{\mathcal{W}}
\newcommand{\fc}{\mathfrak{c}}
\newcommand\nonsense[1]{
\ifthenelse{
\equal{#1}{0}
}
{\frac{1}{N(Q)}\sum_q \Psi\left(\frac qQ\right)}
{\frac{1}{N(Q)}\sum_q \Psi\left(\frac qQ\right) \frac{1}{(\log q)^{#1}}}
}
\newcommand\nonsenseqe[1]{
\ifthenelse{
\equal{#1}{0}
}
{\frac{Q^\e}{N(Q)}\sum_q \Psi\left(\frac qQ\right)}
{\frac{Q^\e}{N(Q)}\sum_q \Psi\left(\frac qQ\right) \frac{1}{(\log q)^{#1}}}
}
\newcommand\sumprimesdistinct[2]{
\sum_{\substack{p_{#1},\ldots, p_{#2} \\ p_i \neq p_j}}
}
\newcommand\sumprimesdistinctdos[2]{
\sum_{\substack{p_{#1},\ldots, p_{#2} \\ p_\alpha \neq p_\beta}}
}
\newcommand\esum{\sum_{\substack{e\mid L_2^\infty\\ e<E}}}
\def\nonsensen{\nonsense{n}}
\newcommand\phinonsense[1]{\frac{\log p_{#1}}{\sqrt{p_{#1}}}\widehat\Phi_{#1}\left(\frac{\log p_{#1}}{\log q}\right) }
\newcommand\phinonsenselambda[1]{\frac{\log p_{#1}\ \lambda_f(p_{#1})}{\sqrt{p_{#1}}}\widehat\Phi_{#1}\left(\frac{\log p_{#1}}{\log q}\right) }
\newcommand\sumoverpdistinct{\sum_{\substack{p_1, \ldots, p_n \nmid q\\ p_i\neq p_j}}}
\newcommand\princnonsense[2]{
\frac{1}{\varphi\left(\frac{(b,N/b)}{( #1 ,(b,N/b))}\right)}
        \frac{1}{L(1+2it,\ol{{ #2 }^2}{\chi}_0)}
}
\newcommand\conjdnonsense[2]{
\sum_{\substack{d\mid #1\\ (d,N/b)=1}}d^{2it} \conjchinonsense{#1}{#2}{d}}
\newcommand\conjdnonsenseprinc[2]{
\sum_{\substack{d\mid m\\ (d,N/b)=1}}\frac{d^{2it}}{\varphi\left(\frac{(b,N/b)}{(m,(b,N/b))}\right)}
\frac{
\mu\left(\frac{(b,N/b)}{(m,(b,N/b))}\right)\varphi{((b,N/b))}
}
{L(1-2it,{#2}_\circ^2 \chi_0)}
}
\newcommand\conjchinonsense[3]{
\ifthenelse{
\equal{#3}{1}
}
{\frac{1}{\varphi\left(\frac{(b,N/b)}{( #1 ,(b,N/b))}\right)}
        \sum_{#2 \bmod \frac{(b,N/b)}{( #1 ,(b,N/b))}}\frac{ \ol{#2} (-\frac{#1}{( #1 ,(b,N/b))}\ol{b_0} a)\ol{\tau( \ol{#2}) }}{L(1-2it, {{ #2 }^2}{\chi}_0)}}
{\frac{1}{\varphi\left(\frac{(b,N/b)}{( #1 ,(b,N/b))}\right)}
        \sum_{#2 \bmod \frac{(b,N/b)}{( #1 ,(b,N/b))}}\frac{ \ol{#2} (-\frac{#1}{( #1 ,(b,N/b))} \ol{(b_0{#3}^2)} a)\ol{\tau( \ol{#2})}}{L(1-2it, {{ #2 }^2}{\chi}_0)}}
}
\newcommand\dnonsense[2]{
\sum_{\substack{d\mid #1\\ (d,N/b)=1}}d^{-2it} \chinonsense{#1}{#2}{d}}
\newcommand\conjdnonsenseabs[2]{
\sum_{\substack{d\mid #1\\ (d,N/b)=1}}\conjchinonsenseabs{#1}{#2}
}
\newcommand\conjchinonsenseabs[2]{
\frac{1}{\varphi\left(\frac{(b,N/b)}{(#1,(b,N/b))}\right)}\sum_{#2\bmod \frac{(b,N/b)}{(#1,(b,N/b))}} \frac{|\tau{(#2)}|} {|L(1-2it, \ol{{#2}^2}\chi_0)|}
}
\newcommand\chinonsense[3]{
\ifthenelse{
\equal{#3}{1}
}
{\frac{1}{\varphi\left(\frac{(b,N/b)}{( #1 ,(b,N/b))}\right)}
        \sum_{#2 \bmod \frac{(b,N/b)}{( #1 ,(b,N/b))}}\frac{ #2 (-\frac{#1}{( #1 ,(b,N/b))}\ol{b_0} a)\tau(\ol {#2}) } {L(1+2it, \ol{{ #2 }^2}{\chi}_0)}}
{\frac{1}{\varphi\left(\frac{(b,N/b)}{( #1 ,(b,N/b))}\right)}
        \sum_{#2 \bmod \frac{(b,N/b)}{( #1 ,(b,N/b))}}\frac{ #2 (-\frac{#1}{( #1 ,(b,N/b))} \ol{b_0{#3}^2} a)\tau(\ol {#2})} {L(1+2it,\ol{{ #2 }^2}{\chi}_0)}}
}
\newcommand\hkq[0]{
\sum_{f \in \mcH_k(q)}\mkern -17mu^h\ 
}
\newcommand\bkd[0]{
\sum_{f \in B_k(d)}\mkern -17mu^h\ 
}
\newcommand\amod[0]{
\sum_{a\bmod (b,N/b)}\mkern -31mu^*\mkern25mu
}
\newcommand\bsum[0]{
\sum_{\substack{b\mid N\\ \text{\ref{eqn:8primecondition f}}}}
}
\newcommand\chinonsenseabs[3]{
\ifthenelse{
\equal{#3}{1}
}
{\frac{1}{\varphi\left(\frac{(b,N/b)}{( #1 ,(b,N/b))}\right)}
        \sum_{#2 \bmod \frac{(b,N/b)}{( #1 ,(b,N/b))}}\frac{ \left|\tau(\ol {#2})\right| }{\left|L(\ol{{ #2 }^2}{\chi}_0, 1+2it)\right|}}
{\frac{1}{\varphi\left(\frac{(b,N/b)}{( #1 ,(b,N/b))}\right)}
        \sum_{#2 \bmod \frac{(b,N/b)}{( #1 ,(b,N/b))}}\frac{ \left|\tau(\ol {#2})\right|}{\left|L(\ol{{ #2 }^2}{\chi}_0, 1+2it)\right|}}
}
\newcommand\chinonsensenonprinc[3]{
\ifthenelse{
\equal{#3}{1}
}
{\frac{1}{\varphi\left(\frac{(b,N/b)}{( #1 ,(b,N/b))}\right)}
        \sum_{\substack{#2 \bmod \frac{(b,N/b)}{( #1 ,(b,N/b))}\\ #2 \neq #2_0}}\frac{ #2 (-\frac{#1}{( #1 ,(b,N/b))}\ol{b_0} a)\tau(\ol {#2}) }{L(\ol{{ #2 }^2}{\chi}_0, 1+2it)}}
{\frac{1}{\varphi\left(\frac{(b,N/b)}{( #1 ,(b,N/b))}\right)}
        \sum_{\substack{#2 \bmod \frac{(b,N/b)}{( #1 ,(b,N/b))}\\ \\ #2 \neq #2_0}}\frac{ #2 (-\frac{#1}{( #1 ,(b,N/b))} \ol{b_0{#3}^2} a)\tau(\ol {#2})}{L(\ol{{ #2 }^2}{\chi}_0, 1+2it)}}
}
\newcommand\eightKloos[1]{S\left(\frac{ #1 }{(#1 ,(b,N/b))},0;b_0\right)}
\newcommand\eightlog[1]{\prod_{#1} \frac{\log p_{#1}}{\sqrt{p_{#1}}}  V\left(\frac{p_{#1}}{P_{#1}}\right)\mathrm{e}\left(v_{#1}\frac{p_{#1}}{P_{#1}}\right)}
\let\ol\overline
\def\lodts{\ldots}
\theoremstyle{plain}\newtheorem{thm}{Theorem}[section]
\theoremstyle{plain}\newtheorem*{thm*}{Theorem}
\theoremstyle{plain}\newtheorem{cor}[thm]{Corollary}
\theoremstyle{plain}
\theoremstyle{plain}\newtheorem{lemma}[thm]{Lemma}
\theoremstyle{plain}\newtheorem{prop}[thm]{Proposition}
\theoremstyle{plain}\newtheorem{remark}[thm]{Remark}
\theoremstyle{plain}\newtheorem{defn}[thm]{Definition}
\let\oldtocsection=\tocsection
\let\oldtocsubsection=\tocsubsection
\renewcommand{\tocsection}[2]{\hspace{0em}\oldtocsection{#1}{#2}}
\renewcommand{\tocsubsection}[2]{\hspace{2em}\oldtocsubsection{#1}{#2}}
\edef\orig@output{\the\output}
\renewcommand*\@maketitle{%
  \normalfont\normalsize
  \@adminfootnotes
  \@mkboth{\@nx\shortauthors}{\@nx\shorttitle}%
  \global\topskip42\p@\relax 
  \@settitle
  \ifx\@empty\authors \else \@setauthors \fi
  \ifx\@empty\@date \else {\vskip 1em \vtop{\centering\large\@date\@@par}}\fi
  \ifx\@empty\@dedicatory
  \else
    \baselineskip18\p@
    \vtop{\centering{\footnotesize\itshape\@dedicatory\@@par}%
      \global\dimen@i\prevdepth}\prevdepth\dimen@i
  \fi
  \@setabstract
  \normalsize
  \if@titlepage
    \newpage
  \else
    \dimen@34\p@ \advance\dimen@-\baselineskip
    \vskip\dimen@\relax
  \fi
} 
\title{On the density of low lying zeros of a large family of automorphic \texorpdfstring{$L$}{}-functions}
\author{Timothy Cheek}
\address{Department of Mathematics, University of Wisconsin}
\email{\href{mailto:tcheek@wisc.edu}{tcheek@wisc.edu}}
\author{Pico Gilman}
\address{Department of Mathematics, Massachusetts Institute of Technology}
\email{\href{mailto:pico@mit.edu}{pico@mit.edu}}
\author{Kareem Jaber}
\address{Department of Mathematics, Princeton University}
\email{\href{mailto:kj5388@princeton.edu}{kj5388@princeton.edu}}
\author{Steven J. Miller}
\address{Department of Mathematics, Williams College}
\email{\href{mailto:sjm1@williams.edu}{sjm1@williams.edu}}
\author{Marie-H\'el\`ene Tom\'e}
\address{Department of Pure Mathematics and Mathematical Statistics, University of Cambridge}
\email{\href{mailto:mht47@cam.ac.uk}{mht47@cam.ac.uk}}
\date{} 
\numberwithin{equation}{section}
 \def\@textbottom{\vskip \z@ \@plus 3pt}
 \let\@texttop\relax
\begin{document}
\begingroup
\allowdisplaybreaks

\subjclass[2020]{11F11, 11M41 (primary), 11M50 (secondary)}
\keywords{Low-lying zeros, $n$-level densities, $n$-th centered moments, Katz-Sarnak Conjectures, Orthogonal family, $GL(2)$ $L$-functions.} 

\begin{abstract}
Under the generalized Riemann Hypothesis (GRH), Baluyot, Chandee, and Li nearly doubled the range in which the density of low lying zeros predicted by Katz and Sarnak is known to hold for a large family of automorphic $L$-functions. Generalizing their results, we prove the Katz-Sarnak density predictions hold for the $n$-th centered moments for test functions whose Fourier transform is compactly supported in $(-\sigma, \sigma)$ for $\sigma = \min\left\{3/2(n-1), 4/(2n-\mathbf{1}_{2\nmid n})\right\}$. For $n=3$, our results improve the previously best known $\sigma=2/3$ to $\sigma=3/4$. We also prove the two-level density agrees with Katz-Sarnak when $\sigma_1 = 3/2$ and $\sigma_2 = 5/6$, respectively, extending the previous best-known sum of supports $\sigma_1 + \sigma_2 = 2$. This work is the first evidence of a new phenomenon: by taking different test functions, we extend the range in which the Katz-Sarnak density predictions are known to hold. Our techniques have applications to understanding related quantities containing sums over multiple primes.
\end{abstract}

\maketitle

\vspace{-1cm}
\tableofcontents

\section{Introduction}
The Katz-Sarnak philosophy \cite{katz1, katz2} predicts that the distribution of zeros near the central point of a family of $L$-functions ordered by their analytic conductor agrees with the distribution of eigenvalues near $1$ of a classical compact group depending on the symmetry type of the family. Katz and Sarnak \cite{katz1} introduced the one-level density to study these arithmetically important zeros near the central point using smooth test functions.

Iwaniec, Luo, and Sarnak \cite{ILS} studied the one-level density of the family of automorphic $L$-functions associated to forms $f$ lying in $\mcH_k(q)$, an orthogonal basis of Hecke eigenforms for the space of cuspidal newforms of weight $k$, level $q$, and trivial nebentypus. Assuming the generalized Riemann Hypothesis (GRH), they established agreement with the density predicted by Katz and Sarnak for test functions whose Fourier transform is supported in $(-2, 2)$ as $q \rightarrow \infty$ through squarefree integers. Later, \cite{barrett} removed the squarefree condition. Recently, Baluyot, Chandee, and Li \cite{BCL} studied a larger family associated to forms in $\mcH_k(q)$ averaged over all $q \asymp Q$, i.e., $q \ll Q$ and $Q \ll q$. For this family, \cite{BCL} developed novel techniques, allowing them to prove agreement with the density predicted by Katz and Sarnak as $Q \rightarrow \infty$ for test functions whose Fourier transform is supported in $(-4,4)$. This result almost doubles the previously best known support.

In addition to extending the support, more information about low lying zeros can be gained by studying higher level densities. While the two-level density is tractable, e.g., see \cite{MillerEC}, when $n>2$, the combinatorics involved require knowing the distribution of the sign of the functional equation, which is beyond current theory. To this end, Hughes and Rudnick \cite{hughesrudnick} introduced a related quantity, the $n$-th centered moment of the one-level density. Like the $n$-level density, this statistic studies $n$-tuples of zeros, yielding better bounds on the order of vanishing at the central point. Assuming GRH for $L(s,f)$ and Dirichlet $L$-functions, Hughes and Miller \cite{Hughes_2007} showed agreement between the Katz-Sarnak density predictions and the $n$-th centered moment of the one-level density for the family of $L$-functions associated to $f \in \mcH_k(q)$ for weight $k \geq n/2$ and test functions whose Fourier transform is suppported in $\sigma = 2/n$ as $q \rightarrow \infty$ through squarefree integers. When splitting this family by sign, \cite{Hughes_2007} obtained support $\sigma = \min\{1/(n-1), 2/n\}$. Subsequent work by \cite{10yearsmall} removed the $k\geq n/2$ condition and extended the support to $\sigma = 2/n$ when splitting by sign. Previous work shows that $\sigma = 2/n$ is a natural boundary. A natural question to ask is whether the methods of \cite{BCL} can be generalized to extend the support for the two-level density and the higher centered moments.

We study the family of $L$-functions associated with $f \in \mcH_k(q)$ averaged over $q \asymp Q$ and show that this is indeed the case. More precisely, we show the Katz-Sarnak philosophy holds for the $n$-th centered moment of the one-level density as $Q \rightarrow \infty$ for test functions whose Fourier transform is supported in $(-\sigma,\sigma)$ with $\sigma = \min\{3/2(n-1), 4/(2n-\mathbf{1}_{2\nmid n})\}$. Moreover, we show the two-level density of the low lying zeros for this family agrees with the predictions of Katz and Sarnak for test functions whose Fourier transform is supported in $\sigma_1 = 3/2$ and $\sigma_2 = 5/6$, respectively, which sum to support $7/3 > 2$, the previously best known result. Thus, we extend the range in which the Katz-Sarnak density conjecture is known to hold. In generalizing the methods of Baluyot, Chandee, and Li, we encounter several technical obstructions. Handling the spectral terms resulting from the Kuznetsov trace formula requires distinct primes, and we use a combinatorial argument to reduce our analysis to this case. To facilitate our application of the Kuznetsov trace formula, we must add back the primes dividing the level of our forms, i.e., $O(q^{(n-1)\sigma}\log(q))$ many terms. Consequently, when $n = 1$, bounding the contribution from these terms is trivial and follows from a direct application of the Petersson trace formula; however, when $n \geq 2$, this step creates a material technical obstruction. For test functions with suitably restricted support, we are able to add back these primes but fundamentally different techniques would need to be developed to remove this restriction. It is only to handle the terms arising from this step that we require $\sigma \leq 3/2(n-1)$; hence by adding back the primes dividing the level using another approach, the techniques we develop would allow one to prove that the density predicted by Katz and Sarnak holds in the extended range $\sigma = 4/(2n-\mathbf{1}_{2\nmid n})$. Extending the support beyond this point would require handling the novel terms that arise from Eisenstein series coefficients. Here, terms emerging only when $n \geq 2$ introduce competing sign influences that thwart a key contour shift. As we cannot move the contour past the critical line, this is a natural limit for these techniques.

To make our results precise, let $f(z) = \sum_{n=1}^\infty \psi_f(n)(4\pi n)^{k/2}\mathrm{e}(nz) \in S_k(q)$ be a holomorphic cuspform with trivial nebentypus, even integral weight $k \geq 4$, and level $q$. If $f$ is an eigenform of all the Hecke operators and the Atkin-Lehner involutions $|_kW(q)$ and $|_kW(Q_p)$ for all the primes $p \mid q$, then we say that $f$ is a newform and if, in addition, $f$ is normalized so that $\psi_f(1) = 1$, then we say that $f$ is primitive (see \cite[\S 2.5]{Ono}). The space $S_k^{\mathrm{new}}(q)$ of newforms has an orthogonal basis $\mcH_k(q)$ of primitive Hecke eigenforms; hence if we let $\lambda_f(n)n^{(k-1)/2}$ be the Hecke eigenvalue of $f$, then for $f \in \mcH_k(q)$ we have that $\lambda_f(n) = \sqrt{n}\psi_f(n)$. For each $f \in \mcH_k(q),$ the associated $L$-function $L(s,f)$ is defined for $\Re(s) > 1$ by
\begin{align}
    L(s,f) \ \coloneqq \ \sum_{n = 1}^\infty \frac{\lambda_f(n)}{n^{s}} \
    &= \ \prod_p \left(1-\frac{\lambda_f(p)}{p^{s}} + \frac{\chi_0(p)}{p^{2s}}\right)^{-1} \nonumber \\
    &= \ \prod_p \left(1-\frac{\alpha_f(p)}{p^s}\right)^{-1}\left(1 - \frac{\beta_f(p)}{p^s}\right)^{-1}\label{eqn:Lfunc},
\end{align}
where $\chi_0$ is the principal character mod $q$. Since $f$ is a newform, $L(s,f)$ can be completed and analytically continued to the entire complex plane. Further, it has a functional equation with root number $\epsilon_f = \pm1.$ Under GRH for $L(s,f)$, the nontrivial zeros of $L(s,f)$ are all of the form $\rho_f = 1/2 + i \gamma_f$ with $\gamma_f \in \rr$, and they occur in complex conjugate pairs. We let $\gamma_f(j)$ be an enumeration of the imaginary part of the nontrivial zeros of $L(s,f)$ counted with multiplicity where $\gamma_f(j) = -\gamma_f(-j)$, and $\gamma_f(0)$ appears if and only if $\epsilon_f = -1$. See \cite{IK} for a general reference on automorphic forms.

Throughout, we let $\Phi$ be an even Schwartz function whose Fourier transform
    \begin{align}
        \widehat\Phi(x)\ =\ \int_{-\infty}^\infty \Phi(t) \mathrm e(-xt)\,dt\  =\ \int_{-\infty}^\infty \Phi(t) \mathrm e^{-2\pi ixt}\,dt
    \end{align}
is compactly supported. For the family of $L$-functions associated to forms $f \in \mcH_k(q)$, the number of zeros up to some fixed constant $T_0$ is $\Theta(\log q)$. As such, Katz and Sarnak defined the one-level density as (see \cite[p.~405]{katz1}) 
\begin{align}
    \mathscr{OD}(f; \Phi) \ \coloneqq \ \sum_{j} \Phi\Bigg( \frac{\gamma_f(j)}{2\pi}\log q \Bigg).\label{defn:oneleveldensity}
\end{align}
Due to the scaling by $\log q/2\pi$ and the decay of $\Phi$, the contribution of the nontrivial zeros of $L(s,f)$ whose distance from the central point $s = 1/2$ is $O(1/\log q)$ are weighted most heavily. Each $L$-function has only a small number of zeros within this range, and thus in order to evaluate the asymptotic behavior of such sums, we must introduce additional averaging over the family. For the orthogonal family we study, the Katz-Sarnak density conjecture predicts that
    \begin{align}
        \lim_{q \rightarrow \infty} \frac{1}{|\mcH_k(q)|} \sum_{f \in \mcH_k(q)} \mathscr{OD}(f; \Phi)\ =\ \int_{-\infty}^\infty \Phi(x)\left(1 + \frac{1}{2}\delta_0(x)\right)\,dx,
    \end{align}
where $W(O)(x) = 1 + \frac{1}{2}\delta_0(x)$ is the distribution for the scaling limit of orthogonal groups $O(N)$ calculated by Katz-Sarnak \cite[p. 409]{katz1} and $\delta_0(x)$ is the Dirac delta distribution at $x = 0$.

While proving the Katz-Sarnak conjecture is far beyond current techniques, results for certain families have been obtained by restricting the support and introducing additional averaging. To interface with spectral theory, it is useful to define the following harmonic average of a set of numbers $\eta_f$ over $f \in \mcH_k(q)$:
\begin{align}
    \hkq \eta_f\ \coloneqq\  \frac{\Gamma(k-1)}{(4\pi)^{k-1}} \sum_{f\in \mcH_k(q)} \frac{\eta_f}{||f||^2},\label{harmonicaverage}
\end{align}
where $||f||^2 \coloneqq \int_{\Gamma_0(q) \backslash \mathbb{H}} |f(z)|^2y^{k-2}\,dx\,dy$ denotes the squared Petersson norm of $f$. As in \cite{BCL}, we introduce additional averaging over $q \asymp Q$. Let $\Psi$ be a non-negative, smooth function compactly supported on $(0,\infty)$ with $\widehat\Psi(0) > 0$ and define
\begin{align}
    \langle \mathscr{OD}(f; \Phi) \rangle_* \ \coloneqq \ \lim_{Q\to\infty}
    \nonsense{0}\hkq \mathscr{OD}(f; \Phi),
\end{align}
where
\begin{align}
    N(Q)\ \coloneqq \ \sum_q \Psi\left(\frac qQ\right) \hkq 1.
\end{align}
By the work of \cite{Martin}, since $\widehat\Psi(0) \neq 0$, we have that $N(Q) \asymp Q$. Baluyot, Chandee, and Li in \cite[Theorem 1.1]{BCL} proved that for $\widehat\Phi$ compactly supported in $(-4,4)$,
\begin{align}
    \langle \mathscr{OD}(f; \Phi) \rangle_*\ =\ \int_{-\infty}^\infty \Phi(x)\left(1 + \frac{1}{2}\delta_0(x)\right)\,dx.
\end{align}
For $n \geq 2$, let $\Phi_i$ be an even Schwartz function whose Fourier transform is compactly supported in $(-\sigma_i, \sigma_i)$ for $1\leq i \leq n$. Then, we define the $n$-th centered moment of the one-level density to be
    \begin{align}
        \mathscr{M}(\Phi_1,\ldots,\Phi_n) \ \coloneqq \ \left\langle \prod_{i=1}^n \Bigg[\mathscr{OD}(f,\Phi_i) - \langle \mathscr{OD}(f,\Phi_i) \rangle_* \Bigg]\right\rangle_*.\label{defn:nthcenteredmoment}
    \end{align}
Our main result is as follows.
\begin{thm}\label{main thm}
    Assume the Riemann hypothesis, GRH for $L(s, f)$, $L(s, \mathrm{sym}^2 f)$, and Dirichlet $L$-functions, and the notation and hypotheses as above. Additionally, suppose that $\{\sigma_i \colon \ 1 \leq i \leq n\}$ is such that for any $1 \leq i \leq n$
    \begin{align}\label{cond:smallsigma}
        \sum_{j \neq i} \sigma_j\ \leq\ \frac{3}{2}
    \end{align}
    and for any partition of $\{1,2,\ldots, n\}$ into two sets $\mathfrak{A},\mathfrak{B}$, we have that either 
    \begin{align}\label{cond:bipart}
        \sum_{i \in \mathfrak{A}}\sigma_i+3\sum_{j\in \mathfrak{B}}\sigma_j\ \leq\ 4\hspace{1cm}\text{ or}\hspace{1cm} 3\sum_{i \in \mathfrak{A}}\sigma_i + \sum_{j \in \mathfrak{B}}\sigma_j\ \leq\ 4.
    \end{align} 
    For instance, one may take $\sigma_i = \sigma = \min\left\{\frac{3}{2(n-1)}, \frac{4}{2n-\mathbf{1}_{2\nmid n}}\right\}$ for each $i$. Then
    \begin{align}
        \mathscr{M}(\Phi_1,\ldots,\Phi_n) \ = \ \frac{\mathbf{1}_{2\mid n}}{(n/2)!} \sum_{\tau\in S_n} \prod_{i=1}^{n/2} \int_{-\infty}^\infty |u|\widehat\Phi_{\tau(2i-1)}(u)\widehat\Phi_{\tau(2i)}(u)\,du.
    \end{align}
\end{thm}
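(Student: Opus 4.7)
The plan is to begin with the explicit formula for $\log L(s,f)$, which, combined with the decay of $\Phi_i$, rewrites
\begin{align*}
\mathscr{OD}(f;\Phi_i)\ =\ \widehat\Phi_i(0)\ +\ \tfrac12\Phi_i(0)\ -\ \frac{2}{\log q}\sum_{p}\frac{\lambda_f(p)\log p}{\sqrt p}\,\widehat\Phi_i\!\left(\frac{\log p}{\log q}\right)\ +\ E_i(f),
\end{align*}
where $E_i(f)$ collects the $\lambda_f(p^2)$ contributions (handled by GRH for $L(s,\mathrm{sym}^2 f)$) and $O((\log q)^{-1})$ tails. Since $\langle\mathscr{OD}(f;\Phi_i)\rangle_*$ subtracts off the constant pieces, expanding the product in $\mathscr{M}(\Phi_1,\ldots,\Phi_n)$ leaves a multilinear combination of averages of $\prod_{i=1}^n\sum_{p_i}\phi_i(p_i)\lambda_f(p_i)$ with weights $\phi_i(p):=-2(\log p)(\sqrt p\log q)^{-1}\widehat\Phi_i(\log p/\log q)$, plus errors involving the $E_i$. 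The right-hand side of the theorem is precisely the pair-contraction of a mean-zero Gaussian, so the target is to show that only pair partitions of $\{1,\ldots,n\}$ survive the limit.

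I next split the $n$-fold prime sum by the partition $\mathcal{P}=\{B_1,\ldots,B_r\}$ encoding which indices share a common prime. After this reduction the primes $q_1,\ldots,q_r$ are mutually distinct, and Hecke multiplicativity rewrites $\prod_i\lambda_f(p_i)=\prod_j\lambda_f(q_j)^{|B_j|}$ as a linear combination of products $\prod_j\lambda_f(q_j^{m_j})$ with $0\leq m_j\leq |B_j|$ of the same parity as $|B_j|$. The source of the main term is the identity $\lambda_f(p)^2=\lambda_f(p^2)+\chi_0(p)$: selecting the $\chi_0(p)$ term from every pair block collapses each pair $\{i,i'\}\subset B_j$ into $\sum_p\phi_i(p)\phi_{i'}(p)$, which by partial summation and the prime number theorem tends to $\int_{-\infty}^\infty|u|\widehat\Phi_i(u)\widehat\Phi_{i'}(u)\,du$, matching the claimed formula. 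The constant piece from a block of size $\geq 4$ vanishes trivially because $\sum_p\prod_{i\in B}\phi_i(p)\ll(\log q)^{-(|B|-2)/2}\to 0$, so the remaining task is to show that every other contribution (partitions containing an odd block, and the non-constant expansions within any partition) is $o(1)$; this forces the odd-$n$ vanishing.

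To control those contributions, I apply the Petersson trace formula to $\hkq\prod_j\lambda_f(q_j^{m_j})$; whenever a nontrivial $\lambda_f$ is present one has $\prod_j q_j^{m_j}\neq 1$, so the diagonal vanishes and only a Kloosterman off-diagonal remains. Following \cite{BCL}, the Kloosterman sum is treated by interchanging the $q\asymp Q$ average with the prime sums, opening the Kloosterman sum via additive characters, and applying the Kuznetsov formula in reverse, converting the result into a spectral average over the holomorphic, Maass, and Eisenstein bases of level dividing $Q$. Each spectral piece is bounded using Weil-type estimates together with the assumed GRH for $L(s,f)$, $L(s,\mathrm{sym}^2 f)$, and Dirichlet $L$-functions. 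Blocks of size $\geq 3$ are strictly easier thanks to the additional $p^{-1/2}$ decay; the genuinely new work lies in singleton-containing partitions, where the argument extends the one-level analysis of \cite{BCL} to a simultaneous sum over $r\geq 2$ distinct primes.

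The main obstacle is twofold, matching the two hypotheses of the theorem. First, the Kuznetsov reduction requires every $q_j$ coprime to the level $q$, so the primes dividing $q$ must be re-inserted by hand. In the one-level setting ($n=1$) this reintroduces a single prime sum of length $O(q^{\sigma}\log q)$ handled trivially; in the $n$-th moment it becomes a sum over up to $n-1$ auxiliary prime variables of combined length $\asymp q^{(n-1)\sigma}\log q$, and balancing this cost against the $p^{-1/2}$ decay from the Petersson term forces the restriction (\ref{cond:smallsigma}). Second, once $n\geq 2$ the Eisenstein series contribution produces new pieces whose signs vary across bipartitions of the spectral parameters, blocking the clean contour shift past $\Re(s)=1/2$ used in \cite{BCL}; the bipartition condition (\ref{cond:bipart}) is exactly what is needed to bound each such piece and yields the extended range $\sigma\leq 4/(2n-\mathbf{1}_{2\nmid n})$. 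Once both obstacles are cleared, assembling the surviving pair-partition main terms and evaluating each $\sum_p\phi_i(p)\phi_{i'}(p)$ asymptotically by the prime number theorem recovers the claimed Gaussian-moment formula.
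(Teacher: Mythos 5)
Your high-level plan matches the paper's: expand via the explicit formula, organize the $n$-fold prime sum by the partition encoding which indices share a prime, extract the main term from the $\chi_0(p)$ piece of $\lambda_f(p)^2 = \lambda_f(p^2) + \chi_0(p)$ in pair blocks, and dispose of the rest via Petersson followed by Kuznetsov. You also correctly identify the two obstacles --- the re-insertion of primes dividing the level forcing (\ref{cond:smallsigma}), and the sign conflicts in the Eisenstein contour shift forcing (\ref{cond:bipart}).

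However, there is a genuine gap in how you propose to bound the non-constant pieces. You write that you will apply the Petersson trace formula directly to $\hkq\prod_j \lambda_f(q_j^{m_j})$, feed the resulting Kloosterman off-diagonal into Kuznetsov, and bound the spectral terms. This does not work when some $m_j \geq 2$: Hecke multiplicativity collapses $\prod_j \lambda_f(q_j^{m_j})$ into $\lambda_f\bigl(\prod_j q_j^{m_j}\bigr)$, and the argument $\prod_j q_j^{m_j}$ is no longer squarefree. The paper's treatment of the Eisenstein contribution (Lemma \ref{lemma:their8.2}, from Kiral--Young) explicitly requires the entry to be squarefree; the discrete/holomorphic analysis of Lemma \ref{boundsonmel} likewise relies on all primes appearing to the first power so that Hecke multiplicativity and Lemma \ref{lemma:mostbeautifullemmaintheworld} apply. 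Extending the entire Kuznetsov machinery to accommodate square (or higher) prime factors is a nontrivial undertaking that your proposal does not address and the paper does not attempt.

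What the paper does instead is a delicate combinatorial reduction (Section \ref{sec3} and Appendix \ref{app:combo}) that makes the Kuznetsov step touch \emph{only} the all-distinct configuration (Case \ref{case5ofcombo}). The $\lambda_f(p^2)$ sums from pair blocks are bounded directly via GRH for $L(s,\mathrm{sym}^2 f)$ (Lemma \ref{lemma:boundingpsq}), with no trace formula at all. Mixed partitions --- where some indices are paired or more and others are singletons --- are handled by induction on $n$, separating the singleton prime sums from the rest via Cauchy--Schwarz (the $n=3$ base case) and a generalized H\"older inequality (Lemma \ref{holder}, Lemmas \ref{lemma:boundingwhenindep}--\ref{lemma:removingdistinctness}), then appealing to the lower-moment results. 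This inductive, H\"older-driven separation is the essential device that keeps the Kuznetsov input squarefree, and it is entirely absent from your plan. Without it, your ``singleton-containing partitions'' step silently requires a non-squarefree generalization of Lemma \ref{lemma:their8.2} that you neither prove nor cite.
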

As a corollary, we obtain the following result toward the Katz-Sarnak density conjecture.
\begin{cor}\label{cor:twolevel}
     Assume the Riemann hypothesis, GRH for $L(s, f)$, $L(s, \mathrm{sym}^2 f)$, and Dirichlet $L$-functions, and the notation and hypotheses as above. Let $\sigma_1=3/2$ and $\sigma_2=5/6$. Then the two-level density satisfies
        \begin{align}
            \left\langle \mathop{\sum\sum}_{j_1\neq \pm j_2} \Phi_1\bigg(\gamma_f(j_1)\bigg)\Phi_2\bigg(\gamma_f(j_2)\bigg) \right\rangle_* \ &= \ 2\int_{-\infty}^\infty |u|\widehat\Phi_1(u)\widehat\Phi_2(u)\,du + \prod_{i=1}^2 \left( \frac12\Phi_i(0)+\widehat\Phi_i(0)\right) \nonumber \\
            &\hspace{.5cm} - \Phi_1\Phi_2(0)-2\widehat{\Phi_1\Phi_2}(0) + {\mathcal{ODD}}\ \Phi_1\Phi_2(0),
        \end{align}
    where $\mathcal{ODD}~\coloneqq~\langle (1-\epsilon_f)/2 \rangle_*$ denotes the proportion of forms with odd functional equation, agreeing with the Katz-Sarnak density conjecture.
\end{cor}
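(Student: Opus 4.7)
The plan is to deduce the two-level density from Theorem \ref{main thm} applied in the case $n=2$ by a standard inclusion-exclusion that relates the restricted double sum over zeros to a product of one-level densities.

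First, I would expand the unrestricted double sum
\begin{align}
\mathscr{OD}(f;\Phi_1)\mathscr{OD}(f;\Phi_2)\ =\ \sum_{j_1,j_2}\Phi_1(\gamma_f(j_1))\Phi_2(\gamma_f(j_2))
\end{align}
(with the implicit scaling by $\log q/2\pi$) and split it according to whether $j_1=\pm j_2$. The diagonal $j_1=j_2$ contributes $\mathscr{OD}(f;\Phi_1\Phi_2)$, and using the fact that $\Phi_2$ is even together with $\gamma_f(-j)=-\gamma_f(j)$, the anti-diagonal $j_1=-j_2$ also contributes $\mathscr{OD}(f;\Phi_1\Phi_2)$. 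These two diagonals intersect only at $j_1=j_2=0$, which occurs exactly when $\epsilon_f=-1$ and contributes $\Phi_1\Phi_2(0)$. Inclusion-exclusion then yields the identity
\begin{align}
\mathop{\sum\sum}_{j_1\neq\pm j_2}\Phi_1(\gamma_f(j_1))\Phi_2(\gamma_f(j_2))\ =\ \mathscr{OD}(f;\Phi_1)\mathscr{OD}(f;\Phi_2)-2\mathscr{OD}(f;\Phi_1\Phi_2)+\mathbf{1}_{\epsilon_f=-1}\Phi_1\Phi_2(0).
\end{align}

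Next, I would take the $\langle\,\cdot\,\rangle_*$ average of both sides. On the right, I rewrite the first term using
\begin{align}
\langle\mathscr{OD}(f;\Phi_1)\mathscr{OD}(f;\Phi_2)\rangle_*\ =\ \mathscr{M}(\Phi_1,\Phi_2)+\langle\mathscr{OD}(f;\Phi_1)\rangle_*\langle\mathscr{OD}(f;\Phi_2)\rangle_*,
\end{align}
and apply Theorem \ref{main thm} with $n=2$ to evaluate $\mathscr{M}(\Phi_1,\Phi_2)=2\int_{-\infty}^{\infty}|u|\widehat\Phi_1(u)\widehat\Phi_2(u)\,du$. For the remaining pieces I invoke the one-level density formula of Baluyot-Chandee-Li, valid for test functions with Fourier support in $(-4,4)$, to obtain $\langle\mathscr{OD}(f;\Phi)\rangle_*=\widehat\Phi(0)+\tfrac{1}{2}\Phi(0)$, and by definition $\langle\mathbf{1}_{\epsilon_f=-1}\rangle_*=\mathcal{ODD}$. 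Combining these expressions reproduces the claimed formula.

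The bulk of the work is a bookkeeping check that the hypotheses of the inputs are met at $(\sigma_1,\sigma_2)=(3/2,5/6)$. For Theorem \ref{main thm}, condition \eqref{cond:smallsigma} requires $\sigma_2\leq 3/2$ and $\sigma_1\leq 3/2$, both satisfied with equality allowed, while \eqref{cond:bipart} gives $\sigma_1+3\sigma_2=3/2+5/2=4$ and $3\sigma_1+\sigma_2=9/2+5/6>4$, so the appropriate inequality of the disjunction holds for each bipartition. For the one-level density applied to $\Phi_1\Phi_2$, note $\widehat{\Phi_1\Phi_2}=\widehat\Phi_1*\widehat\Phi_2$ is supported in $(-\sigma_1-\sigma_2,\sigma_1+\sigma_2)=(-7/3,7/3)\subset(-4,4)$, so the result of Baluyot-Chandee-Li applies. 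The remaining obstacle is purely notational: one must verify the correct handling of the $j=0$ zero that appears if and only if $\epsilon_f=-1$, which is what produces the $\mathcal{ODD}\,\Phi_1\Phi_2(0)$ term and distinguishes the formula from what one would naively get by ignoring the functional equation.
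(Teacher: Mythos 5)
Your proof follows essentially the same route as the paper. The key inclusion–exclusion identity
\begin{align}
\mathop{\sum\sum}_{j_1\neq\pm j_2}\Phi_1(\gamma_f(j_1))\Phi_2(\gamma_f(j_2))\ =\ \mathscr{OD}(f;\Phi_1)\mathscr{OD}(f;\Phi_2)-2\,\mathscr{OD}(f;\Phi_1\Phi_2)+\frac{1-\epsilon_f}{2}\Phi_1(0)\Phi_2(0)
\end{align}
is exactly the identity the paper quotes from the analogue in Miller's work, and the downstream bookkeeping (pass to the covariance $\mathscr{M}(\Phi_1,\Phi_2)$, evaluate it by the $n=2$ case of the main theorem, and handle the one-level pieces, including $\Phi_1\Phi_2$, via Baluyot--Chandee--Li) is what the paper does, just stated more tersely. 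The one place you are slightly more casual than the paper is in interchanging the $Q\to\infty$ limit with the expansion of the product inside $\langle\cdot\rangle_*$; the paper addresses this by invoking the argument of Appendix B of Hughes--Miller together with the explicit-formula reduction \eqref{eqn3.1}, so if you write this up you should cite that step rather than silently assuming the variance identity $\mathscr{M}(\Phi_1,\Phi_2)=\langle\mathscr{OD}(f;\Phi_1)\mathscr{OD}(f;\Phi_2)\rangle_*-\langle\mathscr{OD}(f;\Phi_1)\rangle_*\langle\mathscr{OD}(f;\Phi_2)\rangle_*$ holds for the starred averages. Your hypothesis check is sound; just be careful to state it as verifying condition \eqref{cond:bipart} for both partitions $A=\{1\},B=\{2\}$ and $A=\{2\},B=\{1\}$, noting that in each case one side of the disjunction is satisfied with equality (the two inequalities you listed are the two sides of the disjunction for a single partition, not the two partitions).
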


\begin{remark}
   The novelty of this result lies in that the sum of supports can only be extended beyond the known range by taking the test functions to be different. To the authors knowledge, this is the first time choosing different test functions improves a density result. 
\end{remark}

We now sketch the proof of Theorem \ref{main thm}. An explicit formula (Lemma \ref{lemma:their2.5}) allows us to convert the above sum over zeros into a sum over primes, i.e., to the limit in Theorem \ref{onylallpairedoralldifferent}. We then use a delicate combinatorial argument coupled with induction on $n$ to reduce to the case where all $n$ primes are distinct. Within this combinatorial argument, whenever the primes are not perfectly paired, we either have at least $3$ primes equal, in which case we extract decay from these terms and reduce to a smaller case; or if some of the primes are distinct and the remaining ones appear exactly twice, then we use H\"older's inequality to split the sums over distinct primes from the other prime sums and appeal to a lower case. We require the primes to be distinct so that bounding the spectral terms resulting from the Kuznetsov trace formula is tractable.

The work of \cite{ng} allows us to convert a harmonic average over $\mcH_k(q)$ to a weighted sum of averages over an orthonormal basis of $S_k(d)$ for all $d \mid q$. Next, we apply the Petersson trace formula to remove the dependence of our prime sums on $q$. Then, we apply the Kuznetsov trace formula and individually bound the resulting spectral terms. We next generalize the method of \cite[\S 7]{BCL} to higher centered moments to bound the holomorphic and discrete parts of the spectrum. Our calculations in this section rely on choosing the orthonormal bases for the space of holomorphic cuspforms and the space of Maass cuspforms developed in \cite{Blomer}.

In contrast, the methods developed in \cite[\S 8]{BCL} to handle the contribution from the Eisenstein series part of the spectrum are more difficult to generalize. Handling novel terms that arise in the explicit calculations of the Fourier coefficients of Eisenstein series \cite{Kiral} when there are multiple primes requires detailed analysis. We split the primes into sets $\fp,\fo,\fq$ based on divisibility conditions. We observe that these conditions imply the primes in $\fp$ are all relatively small and hence easy to bound. The primes in $\fo$ produce novel terms arising only when $n \geq 3$. While they are also small, bounding them is more difficult and requires greater insight. The primes in $\fq$ do not divide the level and their contribution is a natural limit for the support we obtain. In the case $n = 1$, these terms are bounded using a contour shift. In the general setting, one cannot uniformly reduce the size of $\prod P_\alpha^{1/2\pm it}$ using the complex part of the exponent. New terms that emerge from the Fourier coefficients of Eisenstein series force us to handle all possible sign combinations on these exponents. Further reducing the size of the exponents would require moving the contour beyond the critical line, hence these competing signs are a clear limit for extending the method of \cite{BCL}. We leverage the lack of perfect sign pairings in these exponents to achieve an improvement for $n$ odd.

In Section \ref{section2}, we introduce our notation, the necessary background on automorphic forms, and several preliminary results. Then, in Section \ref{sec3}, we detail the combinatorial argument which allows us to reduce to bounding a sum over distinct primes. Next, in Section \ref{section4}, we apply the Petersson trace formula, truncate in the less important parameters arising from the conversion from $\mcH_k(q)$ to a basis for $S_k(d)$ for $d \mid q$, and add back the primes dividing the level. We then apply the Kuznetsov trace formula and bound the resulting sums via Propositions \ref{Proposition 6.2} and \ref{prop:their6.3}, whose proofs occur in Sections \ref{sec: DISC} and \ref{sec:EisensteinSeries}, respectively. In Section \ref{sec:EisensteinSeries}, we handle the continuous part of the spectrum, spliting into the case of principal and non-principal characters.

\section{Notation and Preliminary results}\label{section2}
Throughout, $p_i$ always denotes a prime and $\fp, \fo, \fq, \ff, \fh$ and $\fg$ denote sets of primes satisfying certain conditions, which we specify when introducing these sets. For $f:\rr\to\cc$ we define

\[ \sum_\fp \prod_\fp f(\fp) \ \coloneqq \ \sum_{\substack{(p_1,\ldots, p_\ell) \in \fp^\ell \\ p_i\neq p_j}} \prod_{i=1}^\ell f(p_i)  \ = \ \mathop{\sum \cdots \sum}_{\substack{p_1, \ldots, p_{\ell} \\ p_i\neq p_j \\  \text{Conditions}}} \prod_{i=1}^{\ell} f(p_i).\]
All implied constants and Big Oh's depend only on $n$, $k$, $\e$, and the test functions $\Phi_i$ unless otherwise stated. We let $\e$ denote an arbitrary positive number and allow its value to change from line to line. 

\subsection{Automorphic Forms I}
Here, we introduce the spectral theory relevant to the Kuznetsov trace formula and follow the notations as in \cite{BHM, Blomer}. Specifically, we introduce the three parts of the spectrum which arise from an application of the Kuznetsov trace formula and which we handle in the paper.

\emph{Holomorphic forms.} For any holomorphic cuspform $f$ of level $q$, weight $k$, and trivial nebentypus, we write its Fourier expansion as 
\begin{align}\label{eqn:FourierNormalization}
    f(z)\ =\ \sum_{n\geq 1} \psi_f(n)(4\pi n)^{k/2}\mathrm{e}(nz)
\end{align}
and define $\psi_f(n)$ to be the $n$-th Fourier coefficient of $f$.

If $f \in S_k(q)$ is an eigenfunction of all the Hecke operators $T_n$ for $(n,q)=1$, then we call $f$ a Hecke eigenform and write $\lambda_f(n)$ for the $n$-th Hecke eigenvalue of $f$. Since the operators $T_n$ are self-adjoint, for $(n,q) =1$ the eigenvalues $\lambda_f(n)$ are real. If $f$ is additionally a newform, then $f$ is an eigenform of $T_n$ for \textit{all} $n$ and $\lambda_f(p)^2 = p^{-1}$ for $p \mid q$ \cite[(2.24)]{ILS}.

When $f \in S_k(q)$ and $(n,q) = 1$, we have that
\begin{align}\label{eqn:FourierCoeffsHeckeEvalsRel}
    \lambda_f(n)\psi_f(1)\ =\ \sqrt{n}\psi_f(n) \quad \text{and} \quad
    \lambda_f(n)\ \ll_f\ \tau(n)\ \ll\ n^\e.
\end{align}
If $f$ is a newform, then the above holds for all $n$, we have the sharper bound $|\lambda_f(n)| \leq \tau(n)$, and the Hecke eigenvalues are multiplicative \cite[p.~68]{ILS}:
\begin{align}\label{eqn:Heckemult}
    \lambda_f(m)\lambda_f(n)\ =\ \sum_{\substack{d\mid (m,n)\\ (d,q)=1}}\lambda_f\left(\frac{mn}{d^2}\right).
\end{align}

\emph{Maass forms.} Let $f$ be a Maass form on $\Gamma_0(q)$ with spectral parameter $t\coloneqq \kappa_f\in \rr\cup[-i\theta,i\theta]$ (the best known value is $\theta=7/64$ due to Kim-Sarnak \cite{theta}). We write its Fourier expansion as
\begin{align}
    f(z)\ =\ \sum_{n\neq 0}\rho_f(n)W_{0,it}(4\pi|n|y)\mathrm{e}(nx)
\end{align}
for $z=x+iy$, where $W_{0,it}(y)=(y/\pi)^{\frac{1}{2}}K_{it}(y/2)$ is the Whittaker function and $K_{it}$ is the modified Bessel function of the second kind. We define $\rho_f(n)$ to be the $n$-th Fourier coefficient of $f$.
 
As in the holomorphic case, each Maass form is an eigenfunction of the Hecke operators $T_n$ for $(n,q) = 1$. When $(n,q) = 1$ or $f$ is a newform, we have
\begin{align}
    \lambda_f(n)\rho_f(1)\ =\ \sqrt{n}\rho_f(n).
\end{align}
We always have the bound
\begin{align}
    \lambda_f(n)\ \ll\ \tau(n)n^\theta\ \ll\ n^{\theta+\e}.
\end{align}

\emph{Eisenstein series.} Let $\fc$ be a cusp of $\Gamma_0(q)$. We write the Fourier expansion of the (real-analytic) Eisenstein series at $1/2+it$ as
\begin{align}
    E_\fc\left(z; \frac{1}{2}+it\right)\ =\ \delta_{\fc=\infty}y^{\frac{1}{2}+it}+\varphi_\fc(0,t)y^{\frac{1}{2}-it}+\sum_{n\neq 0}\varphi_\fc(n,t)W_{0,it}(4\pi |n|y)\mathrm{e}(nx)
\end{align}
for $z=x+iy$. We define $\varphi_\fc(n,t)$ to be the $n$-th Fourier coefficient of $E_\fc(z,1/2+it)$.

\emph{The Kuznetsov trace formula}. We use the version stated in \cite[Lemma 10]{Blomer}. 
\begin{lemma}\label{lemma:kuznetsov}
For $\phi:(0,\infty)\rightarrow \cc$ smooth and compactly supported, and $m,n,q\in \zz^+$, we have that
\begin{align}
    \sum_{\substack{c\geq 1\\ c\equiv 0\bmod q}}\frac{S(m,n;c)}{c}\phi\left(4\pi \frac{\sqrt{mn}}{c}\right)\ &=\ \sum_{\substack{\ell\geq 2\text{ even}\\1\leq j\leq \theta_\ell(q)}}(\ell-1)!\sqrt{mn} \ \ol{\psi_{j,\ell}}(m)\psi_{j,\ell}(n)\phi_h(\ell)\nonumber\\ 
    &\hspace{.5cm}+ \sum_{j=1}^\infty \frac{\ol{\rho_j(m)}\rho_j(n)\sqrt{mn}}{\cosh(\pi \kappa_j)}\phi_+(\kappa_j)\nonumber\\
    &\hspace{.5cm}+ \frac{1}{4\pi}\sum_{\fc}\int_{-\infty}^\infty \frac{\sqrt{mn}}{\cosh(\pi t)}\ol{\varphi_{\fc}(m,t)}\varphi_\fc(n,t)\phi_+(t)dt,
\end{align}
where $\theta_\ell(q)$ is the dimension of $S_\ell(q)$, the $j$-sum is over an orthonormal basis $\{u_j\}_{j=1}^\infty$ for the space of Maass cuspforms of level $q$, $\kappa_j$ is the spectral parameter of $u_j$, and the Bessel transforms $\phi_h$ and $\phi_+$ are defined by
\begin{align}
    \phi_h(\ell)\ \coloneqq\ 4i^k\int_0^\infty J_{\ell-1}(\xi)\phi(\xi)\frac{d\xi}{\xi}
\end{align}
and
\begin{align}
    \phi_+(r)\ \coloneqq\ \frac{2\pi i}{\sinh(\pi r)}\int_0^\infty (J_{2ir}(\xi)-J_{-2ir}(\xi))\phi(\xi)\frac{d\xi}{\xi}.
\end{align}
Here and throughout, $J_{\ell-1}$ is the Bessel function of the first kind.
\end{lemma}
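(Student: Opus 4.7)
The Kuznetsov trace formula is classical, and I would prove it by computing the Petersson inner product of two Poincar\'e series in two different ways and then applying Bessel/Hankel inversion.

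First, for $m \geq 1$ and a parameter $s$ with $\Re(s) > 1$, introduce the level-$q$ weight-zero Poincar\'e series
\[ P_m(z, s) \ = \ \sum_{\gamma \in \Gamma_\infty \backslash \Gamma_0(q)} \Im(\gamma z)^s \mathrm{e}(m \gamma z). \]
Using the Bruhat decomposition of $\Gamma_0(q)$, I would compute the Fourier expansion of $P_m(z, s)$ at the cusp at infinity: the identity double coset contributes the diagonal term $y^s \mathrm{e}(m z)$, while the non-trivial cosets yield a sum over moduli $c \equiv 0 \pmod q$ of Kloosterman sums $S(m, n; c)$ weighted by Bessel-type integrals of $y^s$. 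This calculation furnishes the geometric link between Kloosterman sums and Fourier coefficients.

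Next, compute $\langle P_m(\cdot, s), P_n(\cdot, s')\rangle$ in two ways. Geometrically, unfold $P_m$ against the Fourier expansion of $P_n$: the integral collapses to a diagonal contribution proportional to a Kronecker delta in $m, n$ plus a sum over $c$ of $S(m, n; c)/c$ weighted by a kernel $k_{s,s'}(4\pi\sqrt{mn}/c)$ depending only on $s, s'$. Spectrally, Parseval's identity on $L^2(\Gamma_0(q)\backslash \hh)$ decomposes this same inner product as a sum over the Maass cuspidal basis $\{u_j\}$ plus an integral over the Eisenstein continuous spectrum at each cusp $\fc$. Each term pulls out $\ol{\rho_j(m)}\rho_j(n)$ (respectively $\ol{\varphi_\fc(m, t)}\varphi_\fc(n, t)$) together with a Gamma factor from the Mellin transform of the Whittaker function $W_{0, i\kappa}$ against $y^{s+s'-1}$; simplifying via the reflection formula yields the $\cosh(\pi \kappa_j)^{-1}$ and $\cosh(\pi t)^{-1}$ denominators. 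The holomorphic contribution enters through the parallel calculation using weight-$\ell$ Poincar\'e series and Petersson's trace formula in each even weight $\ell \geq 2$.

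Finally, to replace the auxiliary kernel $k_{s, s'}$ by an arbitrary smooth, compactly supported $\phi$ on $(0, \infty)$, I would invoke the Kontorovich--Lebedev inversion: expanding $\phi$ in Bessel functions $J_{2ir}$ over $r \in \rr$ produces the continuous spectral weight $\phi_+(r)$, while a complementary Neumann-type expansion over even integers $\ell$ produces $\phi_h(\ell)$. Equating the geometric and spectral expressions then yields the stated identity. The main obstacle is justifying the interchange of summation, integration, and analytic continuation, which requires crossing the continuous-spectrum pole of the Eisenstein series at $s = 1$ and a priori bounds on $\rho_j(m)$ and $\varphi_\fc(m, t)$ ensuring absolute convergence; tracking constants, signs, and $i^k$ factors through the Lebedev--Hankel inversion to recover precisely $\phi_+(r) = \tfrac{2\pi i}{\sinh(\pi r)}\int_0^\infty (J_{2ir}(\xi) - J_{-2ir}(\xi))\phi(\xi)\,d\xi/\xi$ and $\phi_h(\ell) = 4 i^k \int_0^\infty J_{\ell-1}(\xi)\phi(\xi)\,d\xi/\xi$ is the delicate bookkeeping step.
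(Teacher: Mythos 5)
The paper does not prove this lemma at all: it is quoted verbatim as Lemma~10 of \cite{Blomer}, and the line preceding the statement in the source simply says ``We use the version stated in Lemma 10 of \cite{Blomer}.'' So your proposal is not an alternative to the paper's argument so much as a substitute for a citation that the authors were content to leave as a citation.

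That said, your sketch is a correct outline of the classical derivation of the Bruggeman--Kuznetsov formula. Forming the weight-zero Poincar\'e series $P_m(z,s)$, extracting Kloosterman sums from the Bruhat/double-coset decomposition of the Fourier expansion, computing $\langle P_m(\cdot,s),P_n(\cdot,s')\rangle$ by unfolding on the geometric side and by the spectral decomposition of $L^2(\Gamma_0(q)\backslash\hh)$ (Maass cusp forms plus Eisenstein continuous spectrum at each cusp) on the other, simplifying the Gamma ratios via the reflection formula to produce the $\cosh(\pi\kappa_j)^{-1}$ and $\cosh(\pi t)^{-1}$ weights, and then passing from the special kernel $k_{s,s'}$ to an arbitrary smooth compactly supported $\phi$ by Sears--Titchmarsh/Kontorovich--Lebedev inversion (whose discrete part over even $\ell$ produces $\phi_h$) -- all of this is the standard route, and you correctly identify the analytic pitfalls (Poincar\'e series not in $L^2$ near $\Re(s)=1$, absolute convergence to justify interchanging sums and integrals, the pole of the Eisenstein series at $s=1$, and normalization bookkeeping in the inversion). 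Two small cautions if you ever execute this in detail: first, the holomorphic contribution is most cleanly seen as the discrete Neumann-series half of the Sears--Titchmarsh inversion rather than as an independent ``parallel Petersson calculation'' spliced in afterward; treating it as a separate Petersson-over-all-weights computation is possible but requires a careful argument that the two halves of the inversion reassemble $\phi$ exactly. Second, the power of $i$ appearing in $\phi_h(\ell)$ should be tied to the summation variable $\ell$ in the standard normalization (e.g., Iwaniec's Theorem 9.3 has $i^\ell$); the $i^k$ in the statement is a fixed, application-specific normalization inherited from \cite{Blomer}, so if you derive the formula from scratch you need to reconcile your constants with theirs rather than with the textbook version.
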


To handle the Bessel transforms resulting from applying the Kuznetsov trace formula, we recall the following useful bounds.
\begin{lemma}[Adapted from Lemma 3.2, Lemma 3.3, and Lemma 8.8 of \cite{BCL}]\label{lemma:their3.3}
Suppose that $W$ is a smooth function that is compactly supported on $(0,\infty)$. For real $X>0$ and $u,\xi\in \rr$, let 
\begin{align}
    h_u(\xi)\ =\ J_{k-1}(\xi)W\left(\frac{\xi}{X}\right)\mathrm{e}(u\xi).
\end{align}
Then the following estimates hold:
\begin{enumerate}[(a)]
    \item $h_+(r), h_h(r) \ \ll_C\ \frac{1+|\log X|}{F^{1-\e}}\left(\frac{F}{1+|r|}\right)^C\min\left\{X^{k-1},\frac{1}{\sqrt{X}}\right\}$ for some $F<(|u|+1)(1+X)$,
    \item if $r\in(-1/4, 1/4)$, then $h_+(ir)\ \ll\ \left(\frac{1}{\sqrt{X}+(1+|u|)^{1/2 }} \right)\min\left\{X^{k-1},\frac{1}{\sqrt{X}}\right\}$, and
    \item if $0 < |\Im(z)| < \frac12$, then $h_+(z)\ \ll\ (1+|u|)\min\left\{X^{k-1-2|\Im(z)|},\frac{1}{\sqrt{X}}\right\},$
\end{enumerate}
for any $C\geq 0$.
\end{lemma}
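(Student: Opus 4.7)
The plan is to adapt the arguments of Lemmas 3.3 and 8.8 of \cite{BCL}, which prove analogous bounds in the weight-two setting. The necessary modifications for general even $k \geq 4$ are essentially bookkeeping, since the Bessel transforms in the Kuznetsov formula are controlled by classical integration-by-parts arguments against the oscillating kernels $J_{2ir}(\xi)-J_{-2ir}(\xi)$ and $J_{\ell-1}(\xi)$, combined with standard estimates for $J_\nu$.

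First I would record the pointwise bound
\[
|h_u(\xi)|\ \ll\ \min\{X^{k-1}, X^{-1/2}\},
\]
which follows from the uniform estimate $J_{k-1}(\xi)\ll\min\{\xi^{k-1},\xi^{-1/2}\}$ and the localization $\xi\asymp X$ imposed by the cutoff $W$. This supplies the $\min\{X^{k-1}, 1/\sqrt X\}$ factor appearing in all three parts.

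For (a), I would substitute $h_u$ into the definitions of $\phi_+$ and $\phi_h$ and integrate by parts $C$ times using the Bessel recurrence $\frac{d}{d\xi}(\xi^{-\nu}J_\nu(\xi))=-\xi^{-\nu}J_{\nu+1}(\xi)$ and its counterpart raising the index. Each integration by parts gains a factor of $(1+|r|)^{-1}$ (respectively $\ell^{-1}$) at the cost of differentiating the slowly varying factor $W(\xi/X)e(u\xi)\xi^{-1}$. Since $W^{(j)}(\xi/X)$ costs $X^{-j}$ on its support of length $\asymp X$, while $e(u\xi)$ costs $|u|$ per derivative, each derivative is controlled by $F/X$ with $F<(|u|+1)(1+X)$. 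Summing the contributions over the support and absorbing the factor $d\xi/\xi$, which yields the $1+|\log X|$ term once we split the range $\xi\in(0,1]\cup[1,X)$, produces the stated bound with the $(F/(1+|r|))^C$ savings.

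For (b), I would invoke the fact that $(J_{2ir}(\xi)-J_{-2ir}(\xi))/\sinh(\pi r)$ extends to an entire function of $r$ by the Taylor expansion of $J_\nu(\xi)$ in $\nu$, reducing $h_+(ir)$ to a smooth oscillatory integral. A stationary-phase analysis, applied to the combined phase from $e(u\xi)$ and the oscillation of $J_{k-1}(\xi)$ when $\xi$ is large, yields square-root cancellation of size $X^{-1/2}$ when $X$ dominates and of size $(1+|u|)^{-1/2}$ when $u$ dominates, which together give the claimed factor $(\sqrt X+(1+|u|)^{1/2})^{-1}$. For (c), the bound $J_{\pm 2iz}(\xi)\ll \xi^{-2|\Im(z)|}$ on $\xi\ll 1$, together with the uniform bound on its derivative (good for a single integration by parts against $e(u\xi)$ producing the $1+|u|$ factor), converts $X^{k-1}$ into $X^{k-1-2|\Im(z)|}$, while the $1/\sqrt X$ alternative comes from the large-$\xi$ regime as in (a).

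The main obstacle is the careful tracking of the dependencies in (a): one must verify that the integration by parts does not produce singular contributions from $\xi=0$ (handled by matching Bessel-function asymptotics with the power of $\xi$ in the measure and in $W$) and that the constants from $W^{(j)}$ aggregate into the single factor $F^C/(1+|r|)^C$. The stationary-phase analysis in (b) requires a careful splitting between the regimes $X\le 1+|u|$ and $X\ge 1+|u|$ to obtain the asymmetric-looking denominator.
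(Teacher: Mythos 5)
The paper itself offers no proof of this lemma; it simply cites Lemmas 3.3 and 8.8 of \cite{BCL}, the adaptation being the replacement of the fixed-weight Bessel kernel there by $J_{k-1}$, which changes the small-$\xi$ saving from $X$ to $X^{k-1}$. So there is no in-paper proof to compare against; your proposal is essentially a reconstruction of BCL's argument. The overall skeleton is right: $|J_{k-1}(\xi)|\ll\min\{\xi^{k-1},\xi^{-1/2}\}$ with $\xi\asymp X$ supplies the common $\min\{X^{k-1},1/\sqrt X\}$ factor, repeated integration by parts against the Bessel kernel in the definition of $\phi_+,\phi_h$ supplies the $(F/(1+|r|))^C$ decay in (a), and the small-$\xi$ behavior $|J_{\pm 2iz}(\xi)|\ll\xi^{\mp 2\Im(z)}$ is exactly what converts $X^{k-1}$ into $X^{k-1-2|\Im(z)|}$ in (c).

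One point you should repair before this would read as a proof: your explanation of the $(1+|u|)$ factor in (c). You write that it arises from ``a single integration by parts against $e(u\xi)$,'' but the standard integration by parts \emph{against} an oscillatory factor $e(u\xi)$ produces a gain of $(1+|u|)^{-1}$, not a loss of $(1+|u|)$. What you apparently need is the opposite direction: take an antiderivative of the Bessel product (where the recurrence $\frac{d}{d\xi}\bigl(\xi^{-\nu}J_\nu(\xi)\bigr)=-\xi^{-\nu}J_{\nu+1}(\xi)$ and its counterpart are useful) and move a derivative onto $e(u\xi)W(\xi/X)$, which costs a factor of $\max\{|u|,1/X\}\ll 1+|u|$ on the relevant range. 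As stated, the sentence describes a manipulation that would improve, not worsen, the $u$-dependence, so the mechanism needs to be rephrased. A second, milder point: for part (b) the integrand on the exceptional spectrum involves the real-index kernel $J_{-2r}(\xi)-J_{2r}(\xi)$ together with $J_{k-1}(\xi)$ and $e(u\xi)$; when you invoke stationary phase you should say which of the resulting four phases $\pm\xi\pm\xi+2\pi u\xi$ can be near-stationary on $\xi\asymp X$, since the $\sqrt X+\sqrt{1+|u|}$ denominator is exactly the max of $\sqrt X$ and $\sqrt{1+|u|}$ and comes from treating the degenerate phase and the non-degenerate phases separately. Neither remark changes the high-level strategy, which matches the BCL source.
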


\subsection{Quasi-orthogonality relations for \texorpdfstring{$\GL(2)$}{}}
In this subsection, we introduce the Petersson trace formula which allows us to average over an orthonormal basis of $S_k(q)$.
\begin{defn}
    Let $S_k(q)$ be the space of cuspforms of weight $k$, level $q,$ and trivial nebentypus. Let $B_k(q)$ be an orthogonal basis of $S_k(q)$ under the Petersson inner product. Define 
    \begin{align} 
        \Delta_q(m,n)\ =\ \Delta_{k,q}(m,n)\ =\ \sum_{f \in B_k(q)}\mkern -17mu ^h \lambda_f(m)\lambda_f(n), \label{delta defn}
    \end{align}
    where $\sum \mkern-3mu ^h$ denotes the harmonic average defined as in \eqref{harmonicaverage} and when $(n,q) > 1$, $\lambda_f(n) \coloneqq \sqrt{n}\psi_f(n)$.
\end{defn}

\begin{lemma}[Petersson trace formula]\label{lemma:Peterssontraceformula}
    If $m,n,q$ are positive integers, then
    \begin{align} 
        \Delta_q(m,n)\ =\ \delta(m,n) + 2\pi i^{-k} \sum_{c\geq 1} \frac{S(m,n;cq)}{cq} J_{k-1}\left(\frac{4\pi \sqrt{mn}}{cq}\right)
    \end{align}
    where $\delta(m,n) = 1$ when $m=n$ and $0$ otherwise and $S$ denotes a Kloosterman sum.
\end{lemma}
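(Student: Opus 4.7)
The plan is to follow the standard derivation via Poincar\'e series, carefully tracking the normalization baked into the harmonic weight in \eqref{harmonicaverage} and into the Fourier expansion \eqref{eqn:FourierNormalization}. For each positive integer $m$, define the weight-$k$ Poincar\'e series for $\Gamma_0(q)$ by
\[
P_{m,q}(z) \ \coloneqq \ \sum_{\gamma \in \Gamma_\infty\backslash \Gamma_0(q)} (cz+d)^{-k}\, \mathrm{e}(m\gamma z),
\]
where $\gamma = \bigl(\begin{smallmatrix}a&b\\c&d\end{smallmatrix}\bigr)$. Standard absolute convergence estimates for $k\ge 4$ show $P_{m,q} \in S_k(q)$. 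The key point is that $P_{m,q}$ encodes both sides of the Petersson formula: its Fourier coefficients produce the Kloosterman/Bessel term, while its inner products with cuspforms reproduce Fourier coefficients.

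First, I would compute the Fourier expansion of $P_{m,q}$ directly. Splitting the coset sum according to whether $c=0$ or $c>0$, and in the latter case parametrizing by $d\bmod c$ with $c\equiv 0\bmod q$, one extracts the trivial term $\mathrm{e}(mz)$ from the identity coset and reduces the remainder to the standard Bruhat-cell sum. Applying Poisson summation in the $d$-variable and using the integral representation
\[
\int_{-\infty}^{\infty} (z+u)^{-k}\, \mathrm{e}\!\left(-\tfrac{m}{c^2(z+u)}-nu\right) du \ = \ \text{(Bessel factor)} \cdot \mathrm{e}(nz),
\]
which evaluates to a $J_{k-1}$ times elementary factors, one obtains
\[
P_{m,q}(z) \ = \ \mathrm{e}(mz) + \sum_{n\ge 1}\Bigl(2\pi i^{-k}\Bigl(\tfrac{n}{m}\Bigr)^{(k-1)/2}\!\!\sum_{\substack{c\ge 1\\ q\mid c}} \tfrac{S(m,n;c)}{c} J_{k-1}\!\left(\tfrac{4\pi\sqrt{mn}}{c}\right)\Bigr)\mathrm{e}(nz),
\]
after matching factors of $(4\pi n)^{k/2}$ with the normalization \eqref{eqn:FourierNormalization}. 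I would write $c = c'q$ to match the form in the statement.

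Next, I would compute $\langle P_{m,q}, f\rangle$ for $f\in S_k(q)$ by the usual unfolding argument: pulling the sum over $\Gamma_\infty\backslash\Gamma_0(q)$ through the integral reduces the inner product to an integral over the strip $\Gamma_\infty\backslash\mathbb{H}$, and integrating in $x$ picks out the $m$-th Fourier coefficient of $f$. With the normalization in \eqref{eqn:FourierNormalization}, this yields
\[
\langle P_{m,q}, f\rangle \ = \ \frac{\Gamma(k-1)}{(4\pi)^{k-1}}\, \psi_f(m)\sqrt{m}^{\,k-1}\cdot(\text{constant})
\]
with an explicit constant I would determine by bookkeeping. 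Expanding $P_{m,q}$ in the orthogonal basis $B_k(q)$ as $P_{m,q} = \sum_f \langle P_{m,q},f\rangle f/\|f\|^2$ and reading off the $n$-th Fourier coefficient gives an identity whose left-hand side matches $\Delta_q(m,n)$ up to the relation $\sqrt{n}\,\psi_f(n) = \lambda_f(n)\psi_f(1)$ (valid for $(n,q)=1$ and extended to all $n$ by the convention $\lambda_f(n)\coloneqq \sqrt{n}\psi_f(n)$ stated in the definition), and whose right-hand side is precisely $\delta(m,n) + 2\pi i^{-k}\sum_{c\ge 1} S(m,n;cq)(cq)^{-1} J_{k-1}(4\pi\sqrt{mn}/(cq))$.

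The main obstacle is not conceptual but a careful reconciliation of normalizations: the factor $\Gamma(k-1)/(4\pi)^{k-1}$ in the definition of $\sum^h$, the $(4\pi n)^{k/2}$ in the Fourier expansion, and the switch between $\psi_f$ and $\lambda_f$ must all line up so that the identity factor from the identity coset produces exactly $\delta(m,n)$ and no extra scalar. I would verify the constants by specializing to $m=n$ and comparing against the Hilbert-space identity $\sum_f |\langle P_{m,q},f\rangle|^2/\|f\|^2 = \langle P_{m,q},P_{m,q}\rangle$, checking consistency before stating the general formula.
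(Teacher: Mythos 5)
The paper states this lemma without proof; it is the classical Petersson trace formula (see, e.g., Iwaniec, \emph{Topics in Classical Automorphic Forms}, Theorem~3.6, or Lemma~2.1 of \cite{ILS}), invoked as a known result. Your proposal via Poincar\'e series is precisely the standard derivation, and the outline is correct: the Fourier expansion of $P_{m,q}$ (identity coset giving $\mathrm{e}(mz)$, Bruhat cells plus Poisson summation in $d$ giving the Kloosterman--Bessel sum), the unfolding of $\langle P_{m,q}, f\rangle$ to extract $\psi_f(m)$, and the expansion of $P_{m,q}$ in the orthogonal basis $B_k(q)$ are all the right steps, and the identification of the $n$-th coefficient produces the claimed identity. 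You are also right that the paper's convention $\lambda_f(n)\coloneqq \sqrt{n}\,\psi_f(n)$ when $(n,q)>1$ is exactly what makes the Poincar\'e-series argument go through uniformly in $m,n$, since the derivation naturally lives in the $\psi_f$ normalization and $\Delta_q(m,n)=\sqrt{mn}\,\sum^h_{f}\psi_f(m)\psi_f(n)$. Your flag about normalizations is the only genuinely delicate point: the factor $\Gamma(k-1)/(4\pi)^{k-1}$ in the harmonic weight, the $(4\pi n)^{k/2}$ in \eqref{eqn:FourierNormalization}, and the $\Gamma(k-1)/(4\pi m)^{k-1}$ arising from the $y$-integral in the unfolding must conspire to make the identity-coset contribution exactly $\delta(m,n)$ with no leftover scalar, and the $\bigl(n/m\bigr)^{(k-1)/2}$ from the Bessel integral must cancel against the $\sqrt{mn}$ rescaling; your proposed self-consistency check against $\langle P_{m,q},P_{m,q}\rangle$ is a sound way to pin these constants down. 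Since the paper provides no proof to compare against, there is nothing to reconcile beyond noting that your route is the expected one.
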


The following lemma follows directly from \cite[Corollary 2.2]{ILS}.
\begin{lemma}\label{lemma:their2.2}
    If $m,n,q$ are positive integers, then
    \begin{align}
        \Delta_q(m,n)\ =\  \delta(m,n) + O\left( \frac{\tau(q)(m,n,q)(mn)^\epsilon}{q\Big((m,q)+(n,q)\Big)^{1/2}} \Bigg(\frac{mn}{\sqrt{mn}+q}\Bigg)^{1/2} \right).
    \end{align}
\end{lemma}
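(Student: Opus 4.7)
The plan is to start from the Petersson trace formula (Lemma \ref{lemma:Peterssontraceformula}), which produces the diagonal term $\delta(m,n)$ directly; the content of the lemma therefore reduces to bounding
$$E \ \coloneqq \ \sum_{c \geq 1} \frac{S(m,n;cq)}{cq} J_{k-1}\!\left(\frac{4\pi\sqrt{mn}}{cq}\right)$$
by the stated error.

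The first step is to bound each Kloosterman sum. Rather than applying Weil directly to $S(m,n;cq)$---which would only produce the cruder factor $(m,n,q)^{1/2}$---one factors the modulus as $cq = c' \cdot eq$, where $e = (c, q^\infty)$ collects the part of $c$ supported on primes dividing $q$ and $(c', eq) = 1$. The twisted multiplicativity of Kloosterman sums then splits $S(m,n;cq)$ into a product of one factor at modulus $c'$ and one at modulus $eq$. Applying Weil to each factor separately, combined with the observation that the Kloosterman sum at modulus $eq$ is forced to have common factors of $m,n$ with $eq$ before it can be non-negligible, yields the refined numerator $(m,n,q)$ divided by the denominator $((m,q)+(n,q))^{1/2}$. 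This is the main technical input.

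The second step handles the Bessel function using the standard uniform bound $J_{k-1}(x) \ll \min(x^{k-1}, x^{-1/2})$. Splitting the $c$-sum at $cq \asymp \sqrt{mn}$, the oscillatory regime $cq \leq \sqrt{mn}$ contributes on the order of $(mn)^{1/4}/q$ after summing over $c$, while the rapidly decaying regime $cq > \sqrt{mn}$ contributes on the order of $\sqrt{mn}/q$ since $k \geq 4$ makes the tail absolutely convergent. Both regimes are captured uniformly by the compact factor $\left(mn/(\sqrt{mn}+q)\right)^{1/2}$: when $\sqrt{mn} \leq q$ this expression matches the tail bound, and when $\sqrt{mn} > q$ it matches the oscillatory bound. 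Absorbing $\tau(cq) \leq \tau(c)\tau(q)$ and summing the divisor function against a power of $c$ yields the $\tau(q)(mn)^\e$ prefactor.

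The principal obstacle is the refined gcd analysis in step one: obtaining the sharpened ratio $(m,n,q)/((m,q)+(n,q))^{1/2}$ rather than $(m,n,q)^{1/2}$ requires careful bookkeeping of how $m$ and $n$ interact with the $q$-part of the modulus $cq$, and in particular a Chinese Remainder decomposition combined with the vanishing criterion for Ramanujan-type sums. This is precisely the improvement that Iwaniec, Luo, and Sarnak formalize in Corollary 2.2 of \cite{ILS}; given their bound, the lemma follows immediately by rewriting their estimate in the compact form displayed above.
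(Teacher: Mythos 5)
Your proposal is correct and matches the paper's approach: the paper's entire proof is the single sentence that the lemma ``follows directly from Corollary 2.2 of \cite{ILS},'' and you likewise reduce to that corollary, merely also sketching its internal mechanism (Petersson formula, twisted multiplicativity of Kloosterman sums at the $q$-part of the modulus, uniform Bessel bounds split at $cq\asymp\sqrt{mn}$).
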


To convert from a sum over $\mcH_k(q)$ to a weighted sum over $B_k(d)$ for $d \mid q$, we state the form of \cite[Theorem 3.3.1]{ng} given in \cite[Lemma 2.3]{BCL}.

\begin{lemma}\label{lemma:lemma2.3ofbcl}
    Suppose that $m,n,q$ are positive integers such that $(mn,q) = 1,$ and let $q = q_1q_2$ where $q_1$ is the largest factor of $q$ satisfying $p\mid q_1\iff p^2\mid q$ for all primes $p.$ Then 
    \begin{align}
        \Delta_q^*(m,n)\ \coloneqq \ \hkq \lambda_f(m)\lambda_f(n)\ =\ \sum_{\substack{q = L_1L_2d\\L_1\mid q_1\\L_2\mid q_2}} \frac{\mu(L_1L_2)}{L_1L_2} \prod_{\substack{p\mid L_1\\ p^2\nmid d}} \Bigg(1-\frac{1}{p^2}\Bigg)^{-1} \sum_{e\mid L_2^\infty} \frac{\Delta_d(me^2,n)}{e}.
    \end{align}
\end{lemma}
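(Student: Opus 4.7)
The plan is to apply Möbius inversion in level to express the harmonic average over newforms $\mcH_k(q)$ in terms of Petersson sums over the orthogonal bases $B_k(d)$ for $d \mid q$, following the argument of \cite{ng}.

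First, invoke the Atkin--Lehner decomposition $S_k(q) = \bigoplus_{d\mid q}\bigoplus_{e\mid q/d} \mcH_k(d)\,\big|_k B_e$ of the cuspidal space into newforms at divisor levels together with their raisings $B_e \colon f(z) \mapsto e^{k/2}f(ez)$. Every element of an orthogonal basis $B_k(q)$ can, after Gram--Schmidt, be written as a linear combination of raisings $f|_k B_e$ for $f \in \mcH_k(d)$ with $d\mid q$ and $e\mid q/d$. Under the hypothesis $(mn,q) = 1$, the multiplicativity of Hecke eigenvalues \eqref{eqn:Heckemult} yields $\lambda_f(me^2) = \lambda_f(m)\lambda_f(e^2)$, so a raising $f|_k B_e$ contributes to the Petersson inner product exactly a factor of the shape $\Delta_d(me^2, n)$.

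Next, at each prime $p \| q$ (i.e., $p \mid q$ but $p^2 \nmid q$) the oldclass of a newform $f \in \mcH_k(d)$ with $p \nmid d$ consists only of the pair $\{f,\, f|_k B_p\}$; these are non-orthogonal under the Petersson inner product, and Gram--Schmidt introduces the correction factor $(1-p^{-2})^{-1}$. At each prime $p$ with $p^2 \mid q$ the oldclass can be longer, and orthogonalizing the associated tower $\{f|_k B_{p^j}\}$ produces the geometric-type summation $\sum_{e\mid L_2^\infty} \Delta_d(me^2,n)/e$. Assembling these per-prime normalizations multiplicatively and expanding the resulting projector onto $\mcH_k(q)$ by inclusion--exclusion yields the outer Möbius weight $\mu(L_1L_2)/(L_1L_2)$: $L_1 \mid q_1$ tracks primes with $p^2 \mid q$, while $L_2 \mid q_2$ handles primes with $p \| q$, and the constraint $p^2 \nmid d$ inside $\prod_{p\mid L_1,\, p^2\nmid d}(1-p^{-2})^{-1}$ ensures this correction is applied only when the newform has no $p^2$ in its level.

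The main obstacle is the bookkeeping across the two cases $p \| q$ versus $p^2 \mid q$, which forces the split $q = q_1 q_2$ and the distinct roles of $L_1$ and $L_2$. Computing $||f|_k B_e||^2$ relative to $||f||^2$ requires arithmetic input (essentially a local Rankin--Selberg calculation at each bad prime), and packaging these local constants into the clean global shape of the lemma is the central labor; once this is done, collecting Möbius terms produces the identity as stated.
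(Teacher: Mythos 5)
You should first note that the paper does not actually prove Lemma~\ref{lemma:lemma2.3ofbcl}: it is stated verbatim as a cited result, namely ``the form of Theorem 3.3.1 of \cite{ng} given in Lemma 2.3 of \cite{BCL}.'' So there is no in-paper proof to compare your argument against; the relevant comparison is to the argument in Ng's thesis (and to Proposition~2.8 of \cite{ILS} in the squarefree case), which does indeed proceed by Atkin--Lehner decomposition, Gram--Schmidt orthogonalization of oldclasses, and M\"obius inversion to isolate the newform projector. So the general architecture of your sketch is the right one.

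However, the sketch contains a concrete and load-bearing error: you have the two local phenomena attached to the wrong sets of primes, and your own paragraph is internally inconsistent about it. In the lemma, $q_1$ carries the primes with $p^2 \mid q$ and $q_2$ carries the primes with $p \parallel q$. The correction factor $\prod_{p\mid L_1,\, p^2\nmid d}(1-p^{-2})^{-1}$ is attached to $L_1 \mid q_1$, i.e.\ to primes with $p^2 \mid q$; and the geometric summation $\sum_{e\mid L_2^\infty} \Delta_d(me^2,n)/e$ has $e$ supported on $L_2 \mid q_2$, i.e.\ on primes with $p \parallel q$. You assert the opposite: that Gram--Schmidt at primes $p\parallel q$ yields $(1-p^{-2})^{-1}$, and that towers $\{f|_kB_{p^j}\}$ at primes $p^2\mid q$ yield the $e$-sum --- and then two sentences later you (correctly) say that ``$L_1 \mid q_1$ tracks primes with $p^2 \mid q$, while $L_2 \mid q_2$ handles primes with $p \parallel q$.'' These two statements cannot both be right, and a useful sanity check is the squarefree case: when $q$ is squarefree, $q_1 = 1$, so $L_1 = 1$ and the $(1-p^{-2})^{-1}$ product disappears entirely while the $e$-sum persists, exactly as in Proposition~2.8 of \cite{ILS}. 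This is the opposite of what your mechanism would predict, since for squarefree $q$ every bad prime is exactly a $p \parallel q$ prime. So the reconstruction as sketched would produce a formula with the $L_1$ and $L_2$ pieces interchanged, which is not the lemma. The step you flag as ``the central labor'' --- the local Rankin--Selberg / Gram-matrix computation at each bad prime --- is precisely where this must be resolved, and right now the sketch does not resolve it but asserts the wrong local answer.

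One smaller point: the invocation of Hecke multiplicativity $\lambda_f(me^2)=\lambda_f(m)\lambda_f(e^2)$ to explain the appearance of $\Delta_d(me^2,n)$ needs more care, since $\Delta_d$ sums over $f\in B_k(d)$ which includes oldforms and forms for which $e$ is not coprime to the level, where the clean eigenvalue multiplicativity you are using does not hold. In Ng's argument the $e^2$ arises from the raising operators acting on Fourier expansions, not from Hecke multiplicativity of the eigenvalues; the identity is then an algebraic rearrangement of the Petersson sums themselves. You should be explicit that $\Delta_d$ as defined in \eqref{delta defn} uses $\lambda_f(n)=\sqrt{n}\psi_f(n)$ for $(n,d)>1$, and that multiplicativity is not being applied to those coefficients.
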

 
\begin{remark}\label{remarkafterlemma2.3}
As in the remark in \cite[p.~6]{BCL}, from $\mu(L_1L_2)$ we deduce that $(L_2,d)=1$ and thus also $(e,d)=1$. This is crucial in our analysis of $\Delta_d$.
\end{remark}

\subsection{Automorphic Forms II}\label{sec:autformsII}
For holomorphic cuspforms, Atkin and Lehner \cite{AL} introduced the concept of newforms and oldforms. Lemma \ref{lemma:lemma2.3ofbcl} converts a harmonic average of Hecke eigenvalues over an orthogonal basis of the space of newforms to a sum over $d \mid q$ of harmonic averages over an orthogonal basis of $S_k(d)$. The Petersson and Kuznetsov trace formulae convert the resulting sum into sums over orthogonal bases for the space of holomorphic cuspforms and the space of Maass cuspforms, respectively. These bases, in general, contain oldforms. However, newforms have many nice properties. For example, Lemma \ref{lemma:mostbeautifullemmaintheworld}, which we use throughout to bound our sums over primes, crucially relies on GRH for Hecke $L$-functions. Hence, we must express our bases in terms of newforms in order to apply these results. To this end, we introduce orthonormal bases for the space $\mcS_k(\mcN)$ of holomorphic cuspforms of weight $k$ and level $\mcN$ and the space $\mcA(\mcN)$ of Maass cuspforms of level $\mcN$, which are expressed in terms of newforms. 

Here, we state newform theory for Maass cuspforms and detail the construction of an orthonormal basis $\mcB(\mcN)$ for $\mcA(\mcN)$ given by \cite[\S 5]{Blomer}; the construction for holomorphic cuspforms is analogous. Each Maass form is a real-valued eigenfunction of the Laplacian $\Delta \coloneqq -y^2(\frac{\partial^2}{\partial x^2}+\frac{\partial^2}{\partial y^2} )$ acting as a linear operator on the space of cuspforms in $L^2(\Gamma_0(q)\backslash \hh)$. If $\Delta f = \lambda f$, then we write $\lambda~\coloneqq~1/4 + \kappa_f^2$ and call $\kappa_f$ the spectral parameter of $f$. 

Let $\mcA(\mcN, \kappa)$ denote the space of Maass cuspforms of level $\mcN$ and spectral parameter $\kappa$ so that $\mcA(\mcN) = \coprod_{\kappa} \mcA(\mcN, \kappa)$. In general, an $L^2$-basis $\mcB(\mcN, \kappa)$ for $\mcA(\mcN, \kappa)$ also includes oldforms. Let $\mcB^*(\mcM)$ denote the set of level $\mcM$ newforms normalized as level $\mcN$ forms, i.e., such that their first Fourier coefficient satisfies (see \cite[\S 3.1]{BCL})
    \begin{align}
        |\rho_f(1)|^2 \ = \ \frac{(\mcN \kappa_f)^{o(1)}}{\mcN}.
    \end{align}
Let $\mcB^*(\mcM, \kappa) \subset \mcB^*(\mcM)$ denote the set of such forms with fixed spectral parameter $\kappa$. For each $f \in \mcB^*(\mcM, \kappa)$ and $d$ such that $d \mcM \mid \mcN$, define
    \begin{align}
        f|_d(z) \ \coloneqq \ f(d z).
    \end{align}
By newform theory,
    \begin{align}
        \mcA(\mcN, \kappa) \ = \ \bigoplus_{\mcM \mid \mcN} \bigoplus_{f \in \mcB^*(\mcM, \kappa)} \bigoplus_{d\mcM | \mcN} f|_d \cdot \cc. 
    \end{align}
While the first two sums are orthogonal, in general, the innermost is not. However, the forms $f|_d$ corresponding to a given $f \in \mcB^*(\mcM, \kappa)$ are linearly independent. Thus, for each fixed $f \in \mcB^*(\mcM, \kappa)$, Gram-Schmidt orthogonalization of the set $\{f|_d : d \mcM \mid \mcN\}$ yields an orthonormal basis $\mcB(\mcM, f)$ of $\bigoplus_{d\mcM | \mcN} f|_d \cdot \cc$. Then, taking the union over all levels $\mcM \mid \mcN$ and $f \in \mcB^*(\mcM, \kappa)$ gives an orthonormal basis $\mcB(\mcN, \kappa)$ of $\mcA(\mcN, \kappa)$. Finally, taking the union over all spectral parameters $\kappa$ yields an orthonormal basis $\mcB(\mcN) \coloneqq \coprod_{\kappa} \mcB(\mcN, \kappa)$ of $\mcA(\mcN)$. We thus have the orthogonal decomposition
    \begin{align}
        \mcA(\mcN) \ = \ \bigoplus_{\mcM \mid \mcN} \bigoplus_{f \in \mcB^*(\mcM)} \mcB(\mcM, f),
    \end{align} 
where we now take all newforms $f \in \mcB^*(\mcM)$ of lower level $\mcM \mid \mcN$ without fixing $\kappa_f$. Building on the work of \cite{ILS}, who proved the following for squarefree levels and \cite[Proposition 5]{Rouymi}, Blomer and Milicevic in \cite[Lemma 2]{Blomer} proved that the set
    \begin{align}\label{eqn:f^gdef}
        \mcB(\mcM, f) \ = \ \Bigg\{f^{(g)} \ \coloneqq \ \sum_{d \mid g} \xi_g(d) f|_d \ \Bigg| \ g\mcM \mid \mcN \Bigg\},
    \end{align}
where $\xi_g(d)$ is defined as in \cite[(5.6)]{Blomer}, forms an orthonormal basis for $\bigoplus_{d \mid \mcL} f|_d \cdot \cc$. The forms $f^{(g)}$ have the same spectral parameter as the underlying newform $f \in \mcB^*(\mcM)$. Taking the union over forms $f \in \mcB^*(\mcM)$ and levels $\mcM \mid \mcN$ yields an orthonormal basis of $\mcA(\mcN)$ given by
    \begin{align}\label{eqn:Heckebasis}
        \mcB(\mcN) \ = \ \left\{f^{(g)} \ \Bigg| \ f \in \mcB^*(\mcM), \ \mcM \ | \ \mcN, \text{ and } g\mcM \mid \mcN\right\}.
    \end{align}
Following \cite[\S 3.1]{BCL}, we call this basis the Hecke basis of level $\mcN$, and throughout the paper, we take our orthonormal basis of $\mcA(\mcN)$ to be $\mcB(\mcN)$.
    
The Hecke basis of level $\mcN$ consists of Hecke-Maass eigenforms, that is, the $f^{(g)}$ are real-valued joint eigenfunctions of both the Laplacian and all the Hecke operators for $(n, \mcN) = 1$. More precisely, let $0=\lambda_0\leq \lambda_1\leq \cdots$ denote the eigenvalues (counted with multiplicity) of the Laplacian acting on the space of cuspforms in $L^2(\Gamma_0(q)\backslash \hh)$. As above, we write $\lambda_j= 1/4+\kappa_j^2$ and we follow the convention of \cite{BCL}: we choose the sign of $\kappa_j$ such that $\kappa_j\geq 0$ if $\lambda_j\geq 1/4$ and such that $i\kappa_j>0$ otherwise. For each $\lambda_j>0$, we may choose an eigenvector $u_j$ such that $\{u_j\}_{j=1}^\infty$ forms an orthonormal basis of $\mcA(\mcN)$. Moreover, $\{u_j\}_{j=1}^\infty$ coincides with the Hecke basis $\mcB(\mcN)$ for $\mcA(\mcN)$ defined in \eqref{eqn:Heckebasis}. Hence, each element $u_j$ is of the form $f^{(g)}$ and both $f$ and $u_j = f^{(g)}$ have the same spectral parameter.

The $n$-th Fourier coefficient of $f|_d$ is nonzero only if $d \mid n$ (see \cite[\S 5]{Blomer}). Thus, we write
    \begin{align}
        \rho_{f^{(g)}}(n) \ = \ \sum_{d \mid g} \xi_g(d)\rho_{f}(n/d)
    \end{align}
with the convention that $\rho_f(x) = 0$ when $x \not\in \mathbb{Z}$. Since $g \mid \mcN$ in \eqref{eqn:f^gdef}, when $(n, \mcN) = 1$, only the $d = 1$ term is nonzero and thus
    \begin{align}
        \sqrt{n}\rho_{f^{(g)}}(n) \ = \ \xi_g(1)\sqrt{n}\rho_f(n) \ = \ \xi_g(1)\rho_f(1)\lambda_f(n) \ = \ \rho_{f^{(g)}}(1)\lambda_{f}(n).
    \end{align}
Therefore, when $(n, \mcN) = 1$ we have that
    \begin{align}\label{eqn:equaltonewformevals}
        \lambda_{f^{(g)}}(n) \ = \ \lambda_f(n).
    \end{align}
Moreover, by \cite[Lemma 2]{Blomer}, we have the following bound:
    \begin{align}\label{eqn:rhobound}
        \sqrt{n}\rho_{f^{(g)}}(n) \ \ll \ (n\mcN)^\e n^\theta(\mcN, n)^{1/2 - \theta}|\rho_f(1)| \ \ll \ \mcN^\e n^{1/2 +\e}|\rho_f(1)|.
    \end{align}
The Fourier coefficients of the forms in the Hecke basis $\mcB(\mcN)$ are quasi-multiplicative \cite[p.~74]{BHM}: if $u_j = f^{(g)}$ and $m = qm'$ with $(m', q)=1$, then
    \begin{align}\label{eqn:Coeffmult}
        \sqrt{m}\rho_j(m) \ = \ \sum_{d \mid (\mcN, q/(q, \mcN))} \mu(d)\chi_0(d) \lambda_f\left(\frac{q}{d(q, \mcN)}\right)\left(\frac{(\mcN, q)m'}{d}\right)^{1/2}\rho_j\left(\frac{(\mcN, q)m'}{d}\right).
    \end{align}
In particular, when $(q, \mcN) = 1$, by \cite[(5.3)]{Blomer}, we have that
    \begin{align}\label{eqn:FourierCoeffMult}
        \sqrt{m}\rho_j(m) \ = \ \lambda_f(q) \sqrt{m'} \rho_j(m').
    \end{align}
This gives us a way to split $\rho_j(m)$ into the part coprime to $\mcN$ and the part dividing $\mcN$, where the part coprime to $\mcN$ becomes a Hecke eigenvalue of the underlying newform so that we can use Hecke multiplicativity and Lemma \ref{lemma:mostbeautifullemmaintheworld}.
\begin{remark}\label{rem:ALforholforms}
     Analogous results hold for holomorphic cuspforms with slight changes. Specifically, the same construction works for holomorphic cuspforms of weight $k$ and level $\mcN$ if we instead define $f|_d(z) \coloneqq d^{k/2}f(d z)$ (see \cite[p.~15]{Blomer}). As in \cite{BCL}, we denote the Hecke basis of level $\mcN$ we thus obtain by $B_k(\mcN)$. Additionally, for holomorphic cuspforms, we may take $\theta = 0$ in \eqref{eqn:rhobound}. For the forms $f$ in $\mcB_k(\mcN)$, \eqref{eqn:Coeffmult} and \eqref{eqn:FourierCoeffMult} hold with $\rho_j(m)$ replaced by $\psi_f(m)$.
\end{remark}

\subsection{Explicit formula and several consequences of GRH}
We state several consequences of GRH which are necessary in future sections. 

\begin{lemma}
\label{ten min int}
    Assume the Riemann Hypothesis. Suppose $\Phi$ is an even Schwartz function, then
    \begin{align}
        \sum_{p\nmid q} \frac{\log^2(p)}{p \log^2(q)}\widehat\Phi\left(\frac{\log p}{\log q}\right)\ =\ \frac12 \int_{-\infty}^\infty |u|\widehat\Phi(u)du + O\left(\frac{\log\log q}{\log q}\right).
    \end{align}
\end{lemma}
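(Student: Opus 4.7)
The plan is to convert the prime sum into a smooth integral via partial summation against the Chebyshev function $\theta(x) = \sum_{p\le x}\log p$, using the Riemann Hypothesis in the form $\theta(x)=x+O(\sqrt{x}\log^{2}x)$. First I would isolate the finitely many primes $p\mid q$ that actually appear in the sum: since $\widehat{\Phi}$ is supported in some $(-\sigma,\sigma)$, only $p\le q^{\sigma}$ contribute. Splitting these prime divisors at a threshold $X=\log^{2}q$, the small primes are controlled by $\sum_{p\le X}\log^{2}p/p\ll \log^{2}X$ (a standard consequence of Mertens), while for the large primes $p>X$ I use the monotonicity of $\log p/p$ on $[e,\infty)$ together with $\sum_{p\mid q}\log p\le \log q$ to bound their contribution by $(\log X/X)\log q$. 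Both pieces end up as $O((\log\log q)^{2})$, which after division by $\log^{2}q$ is $O((\log\log q)^{2}/\log^{2}q)$ and so absorbed into the stated error.

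For the remaining sum over all primes, define $h(x)=(\log x/x)\widehat{\Phi}(\log x/\log q)$, so that the quantity to estimate equals $(\log^{2}q)^{-1}\int_{2^{-}}^{\infty}h(x)\,d\theta(x)$. Writing $\theta(x)=x+E(x)$ with $E(x)\ll \sqrt{x}\log^{2}x$ and integrating by parts (the boundary terms vanish by the compact support of $h$) produces a main term $(\log^{2}q)^{-1}\int_{2}^{\infty}h(x)\,dx$ plus an error term $-(\log^{2}q)^{-1}\int_{2}^{\infty}h'(x)E(x)\,dx$. The substitution $u=\log x/\log q$ in the main term turns $\int h(x)\,dx$ into $\log^{2}q\int_{\log 2/\log q}^{\infty}u\,\widehat{\Phi}(u)\,du$. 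Because $\Phi$ is even and real, so is $\widehat{\Phi}$; extending the lower endpoint to $0$ costs only $O(1/\log^{2}q)$ (the integrand is $O(u)$ on an interval of length $O(1/\log q)$), and then the identity $\int_{0}^{\infty}u\widehat{\Phi}(u)\,du=\tfrac{1}{2}\int_{-\infty}^{\infty}|u|\widehat{\Phi}(u)\,du$ yields the stated main term.

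For the RH error integral, $h'(x)\ll \log x/x^{2}$ and $h$ is supported on $(0,q^{\sigma}]$, so
\[
\int_{2}^{q^{\sigma}}|h'(x)|\,|E(x)|\,dx \ \ll\ \int_{2}^{\infty}\frac{\log^{3}x}{x^{3/2}}\,dx \ =\ O(1),
\]
which contributes $O(1/\log^{2}q)$ after dividing by $\log^{2}q$. Combining everything, the total error is $O((\log\log q)^{2}/\log^{2}q)$, comfortably inside the claimed bound $O(\log\log q/\log q)$. The main obstacle in the argument is the careful bookkeeping of the $p\mid q$ contribution when $q$ has many small prime divisors; here the crucial input is the radical bound $\sum_{p\mid q}\log p\le \log q$, which must be paired with the correct dyadic split $X=\log^{2}q$ to balance the small- and large-prime pieces. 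Everything else reduces to routine partial summation under RH.
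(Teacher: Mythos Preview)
Your argument is correct and follows the standard route (partial summation against $\theta(x)$ under RH) that the paper defers to via its citation. One small inaccuracy: the boundary term at $x=2^{-}$ in your integration by parts does not vanish (since $E(2^{-})=\theta(2^{-})-2=-2$), but it is $O(1)$ and hence harmlessly absorbed into the $O(1/\log^{2}q)$ error after division; likewise, you only need that $\widehat\Phi$ is even (which follows from $\Phi$ even), not that $\Phi$ is real.
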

\begin{proof}
This follows by a similar calculation as in \cite[(3.5)]{jackmiller}.
\end{proof}

We require an explicit formula relating sums over zeros to sums over primes.
\begin{lemma}[Lemma 2.5 of \cite{BCL}]\label{lemma:their2.5}
Let $\Phi$ be an even Schwartz function whose Fourier transform is compactly supported. We have
\begin{align}
\sum_{\gamma_f}\Phi\left(\frac{\gamma_f}{2\pi}\log q\right)\ =\ -\frac{1}{\log q}\sum_{m=1}^\infty \frac{\Lambda(m)[c_f(m)+c_{\ol f}(m)]}{\sqrt{m}}\widehat\Phi\left(\frac{\log m}{\log q}\right)+ \widehat\Phi(0) + O\left(\frac{1}{\log q}\right),
\end{align}
where $\Lambda$ is the Von Mangoldt function and
\begin{align}
    c_f(n) \ = \ \begin{cases}
        \alpha_f^{\ell}(p) + \beta_f^{\ell}(p) &\textrm{ if } n = p^{\ell} \\
        0 &\textrm{ otherwise,}\label{cdefn}
    \end{cases}
\end{align}
for $\alpha_f(p)$ and $\beta_f(p)$ defined as in \eqref{eqn:Lfunc}.
\end{lemma}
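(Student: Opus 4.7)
The plan is to apply standard contour integration to the logarithmic derivative of the completed $L$-function $\Lambda(s,f) := q^{s/2}\gamma_\infty(s,f)L(s,f)$, which satisfies $\Lambda(s,f) = \e_f \Lambda(1-s,\ol f)$. Since $\widehat\Phi$ has compact support, $\Phi$ is entire of exponential type (Paley-Wiener), so $g(s) := \Phi((s-1/2)\log q/(2\pi i))$ is entire and decays rapidly on vertical lines. Under GRH, every nontrivial zero $\rho = 1/2 + i\gamma_f$ of $L(s,f)$ is a simple pole of $\Lambda'/\Lambda(s,f)$ with residue $1$.

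First I would apply Cauchy's theorem on a tall thin rectangle for any $c > 1$, obtaining
    \begin{align*}
        \sum_{\gamma_f}\Phi\left(\frac{\gamma_f\log q}{2\pi}\right) \ = \ \sum_\rho g(\rho) \ = \ \frac{1}{2\pi i}\left[\int_{(c)} - \int_{(1-c)}\right]g(s)\frac{\Lambda'}{\Lambda}(s,f)\,ds,
    \end{align*}
then substitute $s \mapsto 1-s$ in the second integral and use the evenness of $\Phi$ (giving $g(1-s) = g(s)$) together with the functional equation (giving $(\Lambda'/\Lambda)(1-s,f) = -(\Lambda'/\Lambda)(s,\ol f)$) to fold the contour into
    \begin{align*}
        \sum_{\gamma_f}\Phi\left(\frac{\gamma_f\log q}{2\pi}\right) \ = \ \frac{1}{2\pi i}\int_{(c)}g(s)\left[\frac{\Lambda'}{\Lambda}(s,f) + \frac{\Lambda'}{\Lambda}(s,\ol f)\right]\,ds.
    \end{align*}

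Next, I would decompose $\Lambda'/\Lambda = \tfrac12\log q + \gamma_\infty'/\gamma_\infty + L'/L$. The constant $\log q$ pieces (one from $f$, one from $\ol f$) contribute exactly $\widehat\Phi(0)$, since shifting the contour to the real axis (valid by entirety of $g$) and applying Fourier inversion yields $\int_{(c)}g(s)\,ds = 2\pi i\widehat\Phi(0)/\log q$. For the $L'/L$ piece, I would use the Dirichlet expansion $L'/L(s,f) = -\sum_n \Lambda(n)c_f(n)n^{-s}$ valid for $\Re(s) > 1$, interchange sum and integral (absolutely convergent on $\Re(s)=c$), and evaluate each inner integral via the substitution $u = (s-1/2)\log q/(2\pi i)$ and Fourier inversion to obtain
    \begin{align*}
        \frac{1}{2\pi i}\int_{(c)}g(s)n^{-s}\,ds \ = \ \frac{1}{\sqrt n\log q}\widehat\Phi\left(\frac{\log n}{\log q}\right),
    \end{align*}
producing the desired prime sum with coefficients $c_f(n)+c_{\ol f}(n)$.

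The main obstacle is the archimedean piece $\gamma_\infty'/\gamma_\infty(s,f) + \gamma_\infty'/\gamma_\infty(s,\ol f)$: this must contribute only $O(1/\log q)$. For holomorphic cuspforms of fixed weight $k$ the gamma factor $\gamma_\infty(s,f) = (2\pi)^{-s}\Gamma(s + (k-1)/2)$ is independent of $q$, and Stirling's formula gives $\gamma_\infty'/\gamma_\infty(s,f) = \log|t| + O(1)$ on the line $\Re(s)=c$. Combining this slow growth with the Schwartz decay of $g$ on vertical lines and the concentration of $g$ at scale $1/\log q$ near $s=1/2$ yields the required bound uniformly in $q$, completing the proof.
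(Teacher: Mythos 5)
Your proof is correct and follows the standard contour-integration derivation of the explicit formula; the paper itself does not prove this lemma but simply cites Lemma~2.5 of \cite{BCL}, which is established by the same computation (Paley--Wiener entirety of $g$, Cauchy's theorem on a tall rectangle, folding the left contour onto the right via the functional equation, splitting $\Lambda'/\Lambda$ into the $\tfrac12\log q$, archimedean, and $L'/L$ pieces, and evaluating each by shifting to $\Re s = 1/2$). Two harmless imprecisions: on $\Re s = c$ one has $\gamma_\infty'/\gamma_\infty(s,f) = O(\log(|t|+2))$ rather than ``$\log|t|+O(1)$,'' which would diverge at $t=0$; and the residue of $\Lambda'/\Lambda$ at a zero equals its multiplicity rather than $1$, but since the sum over $\gamma_f$ is counted with multiplicity this does not affect the conclusion.
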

The following lemma is a minor generalization of \cite[(4.23)]{ILS}.
\begin{lemma}\label{lemma:boundingpsq}
    Let $f$ be a primitive holomorphic cuspidal newform of weight $k$ and level $d \mid q$ with trivial nebentypus. Assume GRH for $L(s,\mathrm{sym}^2 f)$ and let $F(t)$ be a smooth compactly supported function on $(-\sigma,\sigma)$, then
        \begin{align}
            \mcS\ \coloneqq \ \frac{1}{(\log q)^2}\sum_{(p,q) = 1} \frac{(\log p)^2 \lambda_f(p^2)}{p} F\left(\frac{\log p}{\log q}\right) \ \ll_F \ \frac{\log \log q}{\log q}.
        \end{align}
\end{lemma}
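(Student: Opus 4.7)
The plan is to recast $\mcS$ as a contour integral of $\log L(s,\mathrm{sym}^2 f)$, exploit the fact that the weight function vanishes to second order at the origin in order to kill the leading constant, and then apply Littlewood's bound on $L'/L$ under GRH.

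First I would introduce $G(v) := v^2 F(v)$, so that $G\in C_c^\infty((-\sigma,\sigma))$ with the crucial properties $G(0)=0$ and $G'(0)=0$. Writing $(\log p)^2/(\log q)^2 \cdot F(\log p/\log q) = G(\log p/\log q)$, the statement becomes
\begin{align*}
\mcS\ =\ \sum_{(p,q)=1}\frac{\lambda_f(p^2)}{p}\, G\!\left(\frac{\log p}{\log q}\right).
\end{align*}
Fourier inversion then gives
\begin{align*}
\mcS\ =\ \int_{-\infty}^\infty \hat G(\xi) \sum_{(p,q)=1}\frac{\lambda_f(p^2)}{p^{\,1-2\pi i\xi/\log q}}\, d\xi,
\end{align*}
where the interchange of summation and integration is justified by first shifting the Mellin contour slightly into the region $\Re s > 1$ (where the Dirichlet series converges absolutely) and then returning under the GRH-based bound on $\log L(s,\mathrm{sym}^2 f)$.

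Next I would identify the inner Dirichlet series with a logarithm of the symmetric square $L$-function. From the Euler product and $\alpha_f(p)^2 + 1 + \beta_f(p)^2 = \lambda_f(p^2)$ at good primes, one obtains
\begin{align*}
\log L(s,\mathrm{sym}^2 f)\ =\ \sum_{p\nmid d}\frac{\lambda_f(p^2)}{p^s} + H(s),
\end{align*}
with $H$ holomorphic and bounded on $\Re s > 1/2 + \e$ (absorbing prime-power terms and factors at $p\mid d$). The difference between the sum over $(p,q)=1$ and the sum over $p\nmid d$ is confined to primes $p\mid q$ with $p\nmid d$; using $|\lambda_f(p^2)|\le 3$ and $(\log p/\log q)^2 \le 1$ on the support, their total contribution to $\mcS$ is bounded by $\|F\|_\infty(\log q)^{-2}\sum_{p\mid q}(\log p)^2/p \ll (\log\log q)^2/(\log q)^2$ via Mertens. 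Thus
\begin{align*}
\mcS\ =\ \int_{-\infty}^\infty \hat G(\xi)\log L\!\left(1-\tfrac{2\pi i\xi}{\log q},\mathrm{sym}^2 f\right) d\xi\ +\ O\!\left(\tfrac{(\log\log q)^2}{(\log q)^2}\right).
\end{align*}

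The key gain now comes from $G(0)=0$, which yields $\int \hat G(\xi)\log L(1,\mathrm{sym}^2 f)\, d\xi = \log L(1,\mathrm{sym}^2 f)\cdot G(0) = 0$. Subtracting this constant inside the integrand and invoking GRH-Littlewood, which gives $|L'/L(1+it,\mathrm{sym}^2 f)|\ll \log\log(q(|t|+2))$, I would estimate
\begin{align*}
\left|\log L\!\left(1-\tfrac{2\pi i\xi}{\log q},\mathrm{sym}^2 f\right)-\log L(1,\mathrm{sym}^2 f)\right|\ \ll\ \frac{|\xi|}{\log q}\log\log q
\end{align*}
for $|\xi|\le \log q$, by integrating $L'/L$ along the short segment on $\Re s = 1$; for $|\xi|>\log q$, the Schwartz decay of $\hat G$ makes the contribution negligible. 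Putting the bounds together,
\begin{align*}
\mcS\ \ll\ \frac{\log\log q}{\log q}\int_{-\infty}^\infty |\xi|\,|\hat G(\xi)|\,d\xi + O\!\left(\tfrac{(\log\log q)^2}{(\log q)^2}\right)\ \ll\ \frac{\log\log q}{\log q}.
\end{align*}

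The main technical obstacle is the bound on $L'/L(1+it,\mathrm{sym}^2 f)$ under GRH for a degree-three $L$-function; this is a standard Selberg–Littlewood argument (one obtains $|\log L(1+it,\mathrm{sym}^2 f)|\ll \log\log(q(|t|+2))$ and then differentiates via Cauchy), but it does rely on GRH for $L(s,\mathrm{sym}^2 f)$ as hypothesized. A secondary bookkeeping point is the justification of the interchange of sum and integral and the treatment of primes dividing $q$ but not $d$; both are handled cleanly by the Mertens-type estimate above and by a standard contour shift.
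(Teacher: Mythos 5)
Your proof is correct, and it follows a related but genuinely different route from the paper's. The paper writes the sum in terms of the von Mangoldt function, defining $g(t\log q)\coloneqq tF(t)$ and reducing $\mcS$ to $\frac{1}{\log q}\int\sum_n\frac{\Lambda(n)\lambda_f(n^2)}{n^{1+1/\log q+it}}\widehat g(t-i/\log q)\,dt$ plus an acceptable error; the inner Dirichlet series is essentially $-\frac{L'}{L}(1+\frac{1}{\log q}+it,\mathrm{sym}^2f)$, bounded by $\log\log(q+|t|+2)$ under GRH, and the $\frac{1}{\log q}$ prefactor supplies the remaining decay. You instead absorb \emph{both} factors of $\log p$ into the weight $G(v)=v^2F(v)$, so the inner Dirichlet series becomes $\log L$ rather than $L'/L$; since $\log L(1,\mathrm{sym}^2f)$ is $O(1)$ but not small, you need the extra observation $G(0)=0$ to subtract the constant term for free, after which a mean-value estimate $|\log L(1-2\pi i\xi/\log q)-\log L(1)|\ll\frac{|\xi|}{\log q}\log\log q$ (obtained by integrating $L'/L$ along $\Re s=1$) closes the argument. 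Both proofs hinge on the same GRH input, namely the Littlewood bound on $L'/L$ near $\Re s=1$; the paper's version gets the $1/\log q$ saving structurally from the $\Lambda$-normalization and avoids the subtraction step, while yours avoids the contour shift to $\Re s=1+1/\log q$. Two minor points worth tightening if you write this up: the interchange of sum and integral at $\Re s=1$ requires a short argument (the Dirichlet series for $\log L$ converges only conditionally there), for which the contour deformation you sketch is standard but deserves a sentence; and, for completeness, you should note that the tail $|\xi|>\log q$ is controlled by $|\log L(\cdot)-\log L(1)|\ll\log\log(q+|\xi|)$ combined with the superpolynomial decay of $\widehat G$.
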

\begin{proof}
     Define $g(t\log q) \coloneqq t F(t)$ so that
    \begin{align}
        \mcS\ =\ \frac{1}{\log q} \sum_{n} \frac{\Lambda(n) \lambda_{f}(n^2)}{n} g\left( \log n \right) + O\left(\frac{1}{\log q}\right).
    \end{align}
    By \cite[Proposition 7]{Tao}, a Parseval-type formula, on 
    $\sigma = 1+1/\log(q)$, we have
        \begin{align}
            \mcS + O\left(\frac{1}{\log q}\right)\ &=\ \frac{1}{2\pi}\frac{1}{\log q} \int_{-\infty}^\infty \sum_{n} \frac{\Lambda(n)\lambda_{f}(n^2)}{n^{1 + 1/\log q + it}} \widehat g\left(t-\frac{i}{\log q}\right)\, dt \nonumber\\
            &\leq \ \frac{1}{2\pi} \frac{1}{\log q} \int_{-\infty}^\infty \left|\frac{L'}{L}\left(1+\frac{1}{\log q}+it,\mathrm{sym}^2(f) \right)\right| \nonumber \\
            &\hspace{.5cm}\times \max\left(e^{-\sigma\log q\frac{1}{\log q}}, e^{\sigma\log q\frac{1}{\log q}}\right) \frac{1}{|t-\frac{i}{\log q}|^2} \int_{-\infty}^\infty |g^{(2)}(u)| \,du \, dt.
        \end{align}
    The order of vanishing of $L(s, \mathrm{sym}^2 f)$ at $s = 1$ is $0$ since Shimura \cite{S1} proved that $L(s,\mathrm{sym}^2f)$ is entire and $L(1, \mathrm{sym}^2f) \neq 0$ by GRH. So by GRH and \cite[Theorem 5.17]{IK},
        \begin{align}
            -\frac{L'}{L}\left(1 + \frac{1}{\log q} + it,\mathrm{sym}^2f\right) \ \ll \ \log\log (q + |t| + 2).
        \end{align}
    The integral over $u$ is $\ll 1/\log q$ and the integral of $t$ is $\ll \log q$, and thus we are done.
\end{proof}
\noindent
Next, we generalize \cite[Lemma 2.7]{BCL} to sums over multiple primes. For holomorphic Hecke eigenforms we have the following lemma.
\begin{lemma} \label{lemma:mostbeautifullemmaintheworld}
    Assume GRH for $L(s,\chi)$ with $\chi$ mod $q$ and for $L(s,f)$ where $f$ is a primitive holomorphic Hecke eigenform of weight $k$ and level $q$. For $1 \leq i \leq n$, let $\Psi_i$ be smooth functions compactly supported on $[0,X]$ with $X\geq 2$. Write $z_i = 1/2 + it_i$ with $t_i \in \rr$ and let $N\geq 1$ be an integer. Suppose that for each $i$ there is some $A_{i}\in \rr$ with
\begin{align}
    \left| \Psi_i^{(3)}(x) \right|\ \leq \ \frac{A_{i}}{x^3 \min\{\log(X+3), X/x\}}
\end{align} 
for all $x \in \rr^+.$ Then,
    \begin{align}
        &\mathop{\sum \cdots \sum}_{\substack{(p_i, N) = 1 \\ p_i \neq p_j}} \prod_{i=1}^n \frac{\lambda_f(p_i)\log p_i\Psi_i(p_i)}{p_i^{z_i}}
        \ \ll \ \prod_{i=1}^n\Bigg[A_{i} \log^{1+\e}(X) \log(q  + k+ |t_i|) + \log(NX)\sup|\Psi_i(t)|\Bigg].
    \end{align}
    If $\chi$ is a nonprincipal character, the above holds with $\lambda_f(p_i)$ replaced with $\chi(p_i)$ and $k=0.$
\end{lemma}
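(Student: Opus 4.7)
The plan is to reduce to the single-prime version of the lemma (Lemma 2.7 of \cite{BCL}) via a Möbius inversion on the set partition lattice $\Pi_n$ of $\{1, \ldots, n\}$ that removes the distinctness constraint $p_i \neq p_j$. Writing $\hat 0$ for the finest partition, the identity
\begin{align}
    \mathbf{1}[p_1, \ldots, p_n \text{ distinct}]\ =\ \sum_{\pi \in \Pi_n} \mu(\hat 0, \pi) \prod_{B \in \pi} \mathbf{1}[p_i = p_j \text{ for all } i, j \in B]
\end{align}
expresses the multi-prime sum as an $O_n(1)$-sized linear combination of block product sums $\prod_{B \in \pi} U_B$, where each block factor
\begin{align}
    U_B\ \coloneqq\ \sum_{(p, N) = 1} \frac{\lambda_f(p)^{|B|}(\log p)^{|B|}}{p^{\sum_{i \in B} z_i}} \prod_{i \in B}\Psi_i(p)
\end{align}
involves only a single prime and can be bounded independently.

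Singleton blocks $B = \{i\}$ are treated directly by the single-variable Lemma 2.7 of \cite{BCL}, yielding exactly the $i$-th factor $A_i \log^{1 + \e}(X) \log(q + k + |t_i|) + \log(NX) \sup|\Psi_i|$ of the target bound. For a block $B$ of size $m \geq 2$, I would estimate trivially using $\Re(\sum_{i \in B} z_i) = m/2$ and compact support in $[0, X]$:
\begin{align}
    |U_B|\ \leq\ \prod_{i \in B}\sup|\Psi_i|\cdot \sum_{p \leq X}\frac{|\lambda_f(p)|^m (\log p)^m}{p^{m/2}}.
\end{align}
For $m \geq 3$, the Kim--Sarnak bound $|\lambda_f(p)| \leq 2p^\theta$ with $\theta = 7/64$ gives $m(1/2 - \theta) > 1$, so the series converges and $|U_B| \ll \prod_{i \in B}\sup|\Psi_i|$. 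For $m = 2$, the required bound $\sum_{p \leq X} |\lambda_f(p)|^2 (\log p)^2/p \ll (\log X)^2$ follows from Deligne's Ramanujan bound in the holomorphic case, and in the Maass case from the unconditional prime number theorem for $L(s, \mathrm{sym}^2 f)$ applied via the Hecke relation $\lambda_f(p)^2 = \lambda_f(p^2) + 1$ (valid for $p \nmid q$).

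Multiplying the block estimates and summing over the $O_n(1)$ partitions yields the desired product bound, since each size-$\geq 2$ block contribution is dominated by $\prod_{i \in B} \log(NX) \sup|\Psi_i|$, the second summand in each factor of the target product, and the first summand in each factor automatically exceeds the second. The Dirichlet character case runs identically, replacing the Hecke-eigenvalue bounds by $|\chi(p)| \leq 1$ throughout the $m \geq 2$ block estimates. The main obstacle is the $m = 2$ bound in the Maass setting, where the pointwise Kim--Sarnak bound $|\lambda_f(p)|^2 \ll p^{2\theta}$ alone is insufficient and one must appeal to averaging via $L(s, \mathrm{sym}^2 f)$; the combinatorial inclusion--exclusion and singleton-block reductions are then routine.
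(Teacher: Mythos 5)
Your proposal is correct in spirit but follows a genuinely different combinatorial route from the paper. The paper proves the lemma by strong induction on $n$, using a peel-off identity: the distinct-$(n+1)$-prime sum equals the free sum over $p_1$ times the distinct-$n$-prime sum over $p_2,\dots,p_{n+1}$, minus $n$ times a size-two diagonal sum $\sum_p \lambda_f(p)^2(\log p)^2\Psi_1(p)\Psi_2(p)/p^{z_1+z_2}$ multiplied by a distinct-$(n-1)$-prime sum. Because each recursive step merges at most two indices, the paper's recursion never creates blocks of size $\geq 3$; its terminal leaves are products of single-prime sums (the $n=1$ base case, Lemma~2.7 of \cite{BCL}) and pair-diagonal sums. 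Your Möbius inversion over $\Pi_n$ does the same inclusion--exclusion in a single step: you do produce blocks of size $m\geq 3$, but you correctly dispose of them via Kim--Sarnak since $m(1/2-\theta)>1$ once $m\geq 3$, so those block sums are $O_m(1)$. Both approaches therefore reduce to the same analytic core (single-prime bounds plus the size-two diagonal $\sum_{p\leq X}|\lambda_f(p)|^2(\log p)^2/p$), and you correctly flag the Maass $m=2$ diagonal as the delicate step; the paper's one-line ``the claim now follows by strong induction'' quietly hides exactly the same issue, so your explicit treatment is a genuine improvement in transparency. One caveat: the lemma only assumes GRH for $L(s,f)$ and $L(s,\chi)$, so the $m=2$ Maass bound must be unconditional and uniform in the level. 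Invoking a full prime number theorem for $L(s,\mathrm{sym}^2 f)$ may be overkill (and uniformity in $q$ is not free there); the upper bound $\sum_{p\leq X}|\lambda_f(p)|^2(\log p)^2/p \ll (\log X)^2(qX\kappa_f)^\e$, which follows from the unconditional Rankin--Selberg estimate $\sum_{n\leq Y}|\lambda_f(n)|^2 \ll Y(qY\kappa_f)^\e$ and partial summation, is sufficient and safer. This subtlety affects the paper's proof equally, so it is not a defect of your approach specifically.
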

\begin{proof}
    Observe that either $p_1$ is distinct from the remaining $n$ primes, or is equal to some $p_a$. Thus,
    \begin{align}
        &\mathop{\sum \cdots \sum}_{\substack{(p_i, N) \ =  \ 1 \\ p_i \neq p_j, \ 1 \leq i \neq j \leq n+1}} \prod_{i=1}^{n+1} \frac{\lambda_f(p_i)\log p_i}{p_i^{z_i}}\Psi_i\left(p_i\right) \nonumber\\
        &\hspace{1cm}= \ \left(\sum_{p_1\nmid N} \frac{\lambda_f(p_1)\log p_1}{p_1^{z_1}} \Psi_1\left(p_1\right)\right) \left(\mathop{\sum \cdots \sum}_{\substack{(p_i, N) \ =  \ 1 \\ p_i \neq p_j, \ 2 \leq i \neq j \leq n+1}} \prod_{i=2}^{n+1} \frac{\lambda_f(p_i)\log p_i}{p_i^{z_i}}\Psi_i\left(p_i\right)\right)
        \label{notdoingxinonsense}
        \\
        &\hspace{1.5cm} - \sum_{\alpha =2}^{n+1} \sum_{p_1\nmid N} \frac{\lambda_f^2(p_1)\log^2 p_1}{p_1^{z_1+z_\alpha}} \Psi_1(p_1)\Psi_\alpha(p_1) \mathop{\sum \cdots \sum}_{\substack{(p_i, p_1N)  =  1, i\neq \alpha \\ p_i \neq p_j, \ 2 \leq i \neq j \leq n+1}} \prod_{\substack{2\leq i\leq n+1 \\ i\neq \alpha}} \frac{\lambda_f(p_i)\log p_i}{p_i^{z_i}}\Psi_i\left(p_i\right). \nonumber 
    \end{align}
    The claim now follows by strong induction. The non-principal character case is analogous.
\end{proof}

In the case of Maass forms, we do not require as stringent of a bound. We first state an analouge of Lemma \ref{lemma:boundingpsq}.

\begin{lemma}\label{mass forms bound on sum squared}
    Let $f$ be a Maass cuspform of level $q$ and weight $k$. Let $1 \leq N \leq q^B$ be any integer. Let $F(t)$ be a smooth compactly supported function on $(-\sigma,\sigma)$ whose 0th, 1st, and 2nd derivatives are uniformly bounded by $C > 0.$ Then 
    \begin{align}
        \frac{1}{(\log q)^2}\sum_{(p,N) = 1} \frac{(\log p)^2 \lambda_f(p)^2}{p} F\left(\frac{\log p}{\log q}\right) \ \ll \ (B+2)C e^\sigma,
    \end{align}
    where the implied constant is independent of $F$.
\end{lemma}
\begin{proof}
    The sum over primes where $(p,Nq) > 1$ is  $\ll \log(Nq)\sup F$, and therefore swapping the sum to one over $(p,q) = 1$ contributes at most $(B+1)C$.
    Then use $\lambda_f(p)^2 = \lambda_f(p^2)+1$ and follow the idea of Lemma \ref{lemma:boundingpsq} but bound $L'/L$ by $\log(q+|t|+2)$ instead.
\end{proof}

\begin{lemma}\label{mostbeautifullemmabutmaassform}
    Assume GRH for $L(s,f)$ where $f$ is a primitive Maass Hecke eigenform of weight $k$ and level $q$. For $1 \leq i \leq n$, let $\Psi_i$ be smooth functions compactly supported on $[0,X]$ with $X\geq 2$. Write $z_i = 1/2 + it_i$ with $t_i \in \rr$ and let $1\leq N$ be an integer.  Suppose $B > 0$ is such that $N,2X \leq q^B$ and that for each $i$ there is some $A_i > 1$ real with
    \begin{align}
        \left| \Psi_i^{(3)}(x) \right|\ \leq \ \frac{A_{i}}{x^3 \min\{\log(X+3), X/x\}} \quad \text{and} \quad \left|\Phi_i^{(j)}(x)\right| \  \leq\ A_i
    \end{align} 
    for all $x \in \rr^+$ and $0\leq j\leq 3$. Then,
    \begin{align}
        &\mathop{\sum \cdots \sum}_{\substack{(p_i, N) = 1 \\ p_i \neq p_j}} \prod_{i=1}^n \frac{\lambda_f(p_i)\log p_i\Psi_i(p_i)}{p_i^{z_i}}
        \ \ll \ \prod_{i=1}^n\Bigg[e^{2B} A_{i} \log^{2}(XNq) \log(q  + k+ |t_i|)(|t_i|+1)^2\Bigg].
    \end{align}    
\end{lemma}
\begin{proof}
%
We proceed by induction on $n.$ When $n=1,$ this is a slightly weaker version of \cite[Lemma 2.7]{BCL}. We now assume the lemma for $n$ and attempt to prove the lemma for $n+1.$ 
    We once again have 
    \begin{align}
        &\mathop{\sum \cdots \sum}_{\substack{(p_j, N) \ =  \ 1 \\ p_i \neq p_j, \ 1 \leq i \neq j \leq n+1}} \prod_{j=1}^{n+1} \frac{\lambda_f(p_j)\log p_j}{p_j^{z_j}}\Psi_j\left(p_j\right) \nonumber\\
        &\hspace{1cm}= \ \sum_{p_1\nmid N} \frac{\lambda_f(p_1)\log p_1}{p_1^{z_1}} \Psi_1\left(p_1\right) \mathop{\sum \cdots \sum}_{\substack{(p_j, N) \ =  \ 1 \\ p_i \neq p_j, \ 2 \leq i \neq j \leq n+1}} \prod_{j=2}^{n+1} \frac{\lambda_f(p_j)\log p_j}{p_j^{z_j}}\Psi_j\left(p_j\right)\nonumber\\
        &\hspace{1.5cm} - \sum_{\alpha =2}^{n+1} \sum_{p_1\nmid N} \frac{\lambda_f^2(p_1)\log^2 p_1}{p_1^{z_1+z_\alpha}}  \mathop{\sum \cdots \sum}_{\substack{(p_i, p_1N)  =  1, i\neq \alpha \\ p_i \neq p_j, \ 2 \leq i \neq j \leq n+1}} \Psi_1(p_1)\Psi_\alpha(p_1) \prod_{\substack{2\leq i\leq n+1 \\ i\neq \alpha}} \frac{\lambda_f(p_i)\log p_i}{p_i^{z_i}}\Psi_i\left(p_i\right).
        \nonumber 
    \end{align}
    Observe that to complete the induction it suffices to bound the first sum, call it $\mcC_1$, and the second sum $\mcC_{2,\alpha}$ for each $\alpha$. We will only bound $\mcC_{2,2}$ as bounding the other $\mcC_{2,\alpha}$'s follows similarly.
    
    Applying the inductive hypothesis for the two sums in $\mcC_1$ nets
    \begin{align}
        \mcC_1\ &\ll\ 
        \prod_{i=1}^n\Bigg[e^{2B}A_{i} \log^{2}(XNq) \log(q  + k+ |t_i|)(|t_i|+1)^2\Bigg].
    \end{align}
    
    We now bound $\mcC_{2,2}.$ The sum over $p_i$ for $i\geq 3$ is some function $G(p_1).$ Let $G_0$ be some smooth function interpolating the points of $G$ whose 0th, 1st, and 2nd derivatives are bounded by $9\cdot \sup G$ and is compactly supported on $[0,2X].$ Let $F(t) = G_0(e^{t\log q})$ so that $F(\frac{\log p}{\log q}) = G(p)$ for all $p\leq X$. Then $F$ is supported in $(0,\log(2X)/\log q) \subseteq (-B,B)$. Since $\Psi_1$ is supported on $[0,X]$ we have
    \begin{align}
        \mcC_{2,2} \ &= \ \sum_{p\nmid N} \frac{\lambda_f^2(p)\log^2 p}{p^{z_1+z_2}} F\left( \frac{\log p}{\log q}\right) \ =\ \sum_{p\nmid N} \frac{\lambda_f^2(p)\log^2 p}{p} F\left( \frac{\log p}{\log q}\right) e^{-i(t_1+t_2)\log p}.
    \end{align}
    We now apply Lemma \ref{mass forms bound on sum squared} and conclude
    \begin{align}
        \mcC_{2,2} \ &\ll \ (B+2) e^{B} \log^2(q)(|t_1+t_2|+1)^2(\sup |F| + |F'| + |F''|)  \\
        &\ll\ \log^4(q)e^{2B}(|t_1|+1)^2(|t_2|+1)^2(\sup |G_0| + |G_0'| + |G_0''|)\nonumber \\
        &\ll \ \log^4(q)e^{2B}(|t_1|+1)^2(|t_2|+1)^2 \nonumber \\
        &\quad \times 9\left(A_1A_2 \prod_{i=3}^n\Bigg[A_{i} e^{2B} \log^{2}(XNq) \log(q  + k+ |t_i|)(|t_i|+1)^2\Bigg]\right) \nonumber \\
        &\ll \ \prod_{i=1}^n\Bigg[A_{i} e^{2B} \log^{2}(XNq) \log(q  + k+ |t_i|)(|t_i|+1)^2\Bigg] \qedhere\nonumber.
    \end{align}
\end{proof}

\begin{remark}\label{rem:evalsums}
    By the remark following \cite[Lemma 2.7]{BCL}, when we apply Lemmas \ref{lemma:mostbeautifullemmaintheworld} and \ref{mostbeautifullemmabutmaassform} to smooth functions of the form $\Psi(x) = V(cx/X)$ for $V$ supported on $[0, c]$ and $\Psi(x) = \widehat{\Phi}\left(\frac{c\log x}{\log X}\right)$ for $\widehat\Phi$ compactly supported on $(-\infty, c]$ we may take $A_i \ll 1$.
\end{remark}

\section{Two-level Density and Centered Moments}\label{sec3}
We now prove Theorem \ref{main thm}. We reduce the computation of the two-level density and the $n$-th centered moment of the one-level density to a sum over $n$ distinct primes not dividing the level.

Recall the definition of the one-level density given in \eqref{defn:oneleveldensity}. By a standard argument, as found in \cite[\S 4]{BCL}, using the explicit formula (Lemma \ref{lemma:their2.5}) and GRH for $L(s,\mathrm{sym}^2f)$, we conclude as in \cite[(4.6)]{BCL} that
\begin{align}
    \mathscr{OD}(f; \Phi) \ &=\ \widehat\Phi(0) + \frac12 \Phi(0) - 2\mathscr{P}(f; \Phi) + O\left(\frac{\log \log q}{\log q}\right),\label{eqn3.1}
\end{align}
where
\begin{align}
    \mathscr{P}(f; \Phi) \ \coloneqq \ \frac{1}{\log q} \sum_{p \nmid q} \lambda_f(p)\frac{\log p}{\sqrt{p}}\widehat\Phi\left(\frac{\log p}{\log q}\right).
\end{align}
We note the use of $c_f = \ol{c_f} = c_{\ol f}$ in the above calculation.

Thus, to study the $n$-th centered moments (recall its definition in \eqref{defn:nthcenteredmoment}), we evaluate 
    \begin{align}
        \mathscr{M}(\Phi_1,\cdots,\Phi_n) \ &= \ \Bigg\langle \prod_{i=1}^n\left[-2\mathscr{P}(f; \Phi_i) + O\left(\frac{\log \log q}{\log q}\right)\right]\Bigg\rangle_*.
    \end{align}
To keep the notation concise, we assume the $\Phi_i$ are equal, although this is not necessary. Loosely following the argument in second half of \cite[Appendix B]{Hughes_2007}, we expand 
    \begin{align}
        \mathscr{M}(\Phi_1,\cdots,\Phi_n) \ &= \ (-2)^n \Bigg\langle \prod_{i=1}^n\mathscr{P}(f; \Phi_i)\Bigg\rangle_* + \sum_{j=1}^n \binom{n}{j} \Bigg\langle \prod_{i=1}^{n-j}\mathscr{P}(f; \Phi_i)\cdot O\left(\frac{\log \log q}{\log q}\right)^{j}\Bigg\rangle_*.
    \end{align}
By H\"older's inequality, we have 
\begin{align}
        \left|\mathscr{M}(\Phi_1,\cdots,\Phi_n) -(-2)^n \Bigg\langle \prod_{i=1}^n\mathscr{P}(f; \Phi_i)\Bigg\rangle_*\right| \ 
        &\leq \  \sum_{j=1}^n \binom{n}{j} \Bigg\langle \prod_{i=1}^{n-j}\mathscr{P}(f; \Phi_i)\Bigg\rangle_*\Bigg\langle O\left(\frac{\log \log Q}{\log Q}\right)^{j}\Bigg\rangle_* \nonumber \\
        &\leq \ \sum_{j=1}^n \binom{n}{j} \Bigg\langle O(1)\Bigg\rangle_*\Bigg\langle O\left(\frac{\log \log q}{\log q}\right)^{j}\Bigg\rangle_* \nonumber \\
        &\leq \ \Bigg\langle O\left(\frac{\log \log q}{\log q}\right) \Bigg\rangle_*.\label{appendixB}
    \end{align}
Therefore, we have 
    \begin{align}
        \mathscr{M}(\Phi_1,\cdots,\Phi_n) \
        &= \ (-2)^n \Bigg\langle \prod_{i=1}^n\mathscr{P}(f; \Phi_i)\Bigg\rangle_* . \label{nth cent moment}
    \end{align}

We have reduced the proof of Theorem \ref{main thm} to the following theorem, whose proof comprises the rest of the paper. 

\begin{thm}\label{onylallpairedoralldifferent}
    Assume RH and GRH for $L(s, f)$, $L(s, \mathrm{sym}^2f)$, and Dirichlet $L$-functions. Let $\Psi$ be a nonnegative, smooth function compactly supported on $(0, \infty)$ for which $\widehat\Psi(0)> 0$. For $1 \leq i \leq n$, let $\Phi_i$ be an even Schwartz function compactly supported in $(-\sigma_i, \sigma_i)$, where the $\sigma_i$ satisfy Conditions \eqref{cond:smallsigma} and \eqref{cond:bipart}. Then we have that
        \begin{align} 
            \Bigg\langle\prod_{i=1}^n\mathscr{P}(f; \Phi_i)\Bigg\rangle_* \ 
            &=\ \frac{\mathbf{1}_{2|n}}{2^n(n/2)!} \sum_{\tau\in S_n} \prod_{i=1}^{n/2} \int_{-\infty}^\infty |u|\Phi_{\tau(2i-1)}(u)\Phi_{\tau(2i)}(u)\,du.
            \label{equation 37}
        \end{align}
    Recall that $\Psi$ is implicit in the left hand side.
\end{thm}
Before giving the proof of Theorem \ref{onylallpairedoralldifferent}, we deduce Corollary \ref{cor:twolevel}.
\begin{proof}[Proof of Corollary \ref{cor:twolevel}]
To compute the two-level density, by a similar calculation as in \cite[(4.5)]{MillerEC}, we have
\begin{align*}
    \sum_{j_1\neq \pm j_2} \prod_{i=1}^2 \Phi_i\Bigg(\frac{\gamma_f(j_i)}{2\pi}\log q\Bigg) \ =\ \prod_{i=1}^2 \mathscr{OD}(f; \Phi_i) - 2\mathscr{OD}(f; \Phi_1\Phi_2) + \frac{1 - \epsilon_f}{2}\Phi_1(0)\Phi_2(0).
\end{align*}
By \eqref{eqn3.1}, averaging over both $f \in \mcH_k(q)$ and $q \asymp Q$, and using the argument in \eqref{appendixB} and the one-level density calculation as in \cite[Theorem 1.1]{BCL}, the result follows from Theorem \ref{onylallpairedoralldifferent}.
\end{proof}

\subsection{Proof of Theorem \texorpdfstring{\ref{onylallpairedoralldifferent}}{}}\label{subsec3.1}
\noindent 
This subsection is dedicated to the proof of Theorem \ref{onylallpairedoralldifferent}. 
For ease of notation, apart from Lemma \ref{lemmaforcase1ofcombo}, we assume that all the functions $\Phi_i$ are equal to $\Phi.$ Unlike related papers, e.g. \cite{10yearsmall, Hughes_2007}, we must reduce to the case of distinct primes
to both perform the manipulations needed to apply the Kuznetsov trace formula and to handle the
resulting terms. For instance, bounding Eisenstein series Fourier coefficients becomes tractable with squarefree input.

We split the left hand side sum of \eqref{equation 37} into sums over powers of distinct primes and bound each
\begin{align}
    \mcS\ =\ \nonsense{0} \hkq \mathop{\sum\cdots\sum}_{\substack{p_1,\ldots, p_\ell \nmid q\\ p_i\neq p_j}} \prod_{i=1}^\ell \mcF(p_i, a_i),
\end{align}
where $a_1 \geq a_2\geq \cdots \geq a_\ell$ is a partition of $n$ and
\begin{align}
    \mcF(p_i, a_i)\ \coloneqq\ \left(\frac{\lambda_f(p_i) \log p_i}{\sqrt{p_i}\log q}\Phi\left(\frac{\log p_i}{\log q}\right)\right)^{a_i}.
\end{align} 
We split into five disjoint cases based on the partition of $n$ as delineated below.
\setlist[enumerate, 1] 
{1., 
leftmargin  = 2em,
itemindent  = 0pt,
labelwidth  = 2em,
labelsep    = 0pt,
font        = \bfseries,
align       = left,
itemsep     = 1.5mm,
ref         = \mbox{\textbf{\arabic*}}
}
\begin{enumerate}
    \setlength\itemsep{0em}
    \item Each part is $2.$ (Only this term contributes and as such we compute it in Lemma \ref{lemmaforcase1ofcombo}.) \label{case1ofcombo}
    \item All parts are $\geq 2$, and at least one part is $>2$.\label{case2ofcombo}
    \item There is exactly one $1$ in the partition (and $n\geq 2$).\label{case3ofcombo}
    \item There are at least two $1$'s in the partition, but not all $1$'s.\label{case4ofcombo}
    \item Each part is $1.$
    \label{case5ofcombo}
\end{enumerate}
We proceed by induction on $n$ with base cases $n=1,2,3$.
We first compute Case \ref{case1ofcombo} and bound Case \ref{case2ofcombo}. We then bound Case \ref{case5ofcombo} in Proposition \ref{prop42}. Note that these three cases are proven independently of $n$. 
To prove Case \ref{case3ofcombo} and Case \ref{case4ofcombo}, we bound the resulting sums in Lemma \ref{lemmaforcase3ofcombo} and \ref{lemmaforcase4ofcombo}; their full proofs are in Appendix \ref{app:combo}.


Before proving Cases \ref{case1ofcombo}, \ref{case2ofcombo}, \ref{case3ofcombo}, and \ref{case4ofcombo}, we prove several auxiliary lemmas.

\begin{lemma}\label{bounding a>2}
    If $a_i \geq 3$, then
    \begin{align}
        \sum_{p_i \nmid N} \mcF(p_i, a_i)\ \ll\ \frac{1}{\log^{a_i}(q)},
    \end{align}
    where the bound is independent of $f$ and $N \mid qp_1p_2\cdots p_n.$
\end{lemma}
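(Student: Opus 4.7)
The plan is elementary: pull the factor $(\log q)^{-a_i}$ out of the definition of $\mcF$, apply Deligne's bound to $\lambda_f$ and the boundedness of $\Phi$, and then observe that the hypothesis $a_i \ge 3$ forces the remaining sum over primes to converge to an absolute constant.

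Concretely, I would begin by rewriting
\[
\sum_{p \nmid N} \mcF(p, a_i) \ = \ \frac{1}{(\log q)^{a_i}} \sum_{p \nmid N} \frac{\lambda_f(p)^{a_i}(\log p)^{a_i}}{p^{a_i/2}} \Phi\!\left(\frac{\log p}{\log q}\right)^{\!a_i}.
\]
Since every $f$ under consideration is a primitive Hecke eigenform in $\mcH_k(d)$ for some $d$, Deligne's bound gives $|\lambda_f(p)| \le 2$, so $|\lambda_f(p)|^{a_i} = O(1)$; since $\Phi$ is an even Schwartz function, $\|\Phi\|_\infty < \infty$. Upper bounding these factors trivially and dropping the coprimality condition $p \nmid N$ (which only enlarges the sum) yields
\[
\left|\sum_{p \nmid N} \mcF(p, a_i)\right| \ \ll \ \frac{1}{(\log q)^{a_i}} \sum_{p} \frac{(\log p)^{a_i}}{p^{a_i/2}}.
\]

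To finish, I would note that since $a_i \ge 3$ we have $a_i/2 \ge 3/2$, and $(\log p)^{a_i} \ll_{a_i} p^{1/4}$ for all sufficiently large $p$, so the remaining series is dominated by $\sum_{p} p^{-5/4}$, which converges to a constant depending only on $a_i$. The resulting bound depends only on $a_i$ and $\Phi$, not on $f$ or on $N$, which establishes the claim with the required uniformity.

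There is no real obstacle here: the entire content of the lemma is that $a_i \ge 3$ pushes the exponent of $p$ strictly past the critical line for convergence of the associated Dirichlet series. What is notable is that the same argument would fail at $a_i = 2$, since in that case Lemma \ref{ten min int} shows $\sum_p (\log p)^2 / p \sim \tfrac{1}{2}\log^2 q$, which exactly cancels the $(\log q)^{-2}$ prefactor. This is precisely why Case \ref{case1ofcombo} (all parts equal to $2$) must be evaluated as a genuine main term in Theorem \ref{onylallpairedoralldifferent} rather than absorbed into an error, and why the present lemma is exactly the right tool to dispose of every partition in Cases \ref{case2ofcombo}--\ref{case4ofcombo} that contains a part of size $\ge 3$.
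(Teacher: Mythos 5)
Your proof is correct and takes the same approach as the paper's (which simply states that $|\lambda_f(p_i)| \leq \tau(p_i) = 2$ and that the sum over $p_i$ converges absolutely); you have merely spelled out the convergence step that the paper leaves implicit. The closing remark contrasting with the $a_i = 2$ case is accurate but not part of the proof itself.
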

\begin{proof}
    We use that $|\lambda_f(p_i)| \leq \tau(p_i) = 2$ and that the sum over $p_i$ converges absolutely. 
\end{proof}

\begin{lemma}\label{bounding a=2}
    For any $N \mid qp_1p_2\cdots p_n,$ we have that
    \begin{align}
        \sum_{p_i \nmid N} \mcF(p_i, 2)\ =\ \frac12 \int_{-\infty}^\infty |u|\widehat\Phi(u)^2du + O\left(\frac{\log\log q}{\log q}\right),
    \end{align}
    with error term bounded independent of $f$.
\end{lemma}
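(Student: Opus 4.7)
The plan is to use Hecke multiplicativity to reduce the sum to two familiar pieces, each handled by one of the previously established GRH-consequences. Since $p \nmid q$, the Hecke relation \eqref{eqn:Heckemult} gives $\lambda_f(p)^2 = \lambda_f(p^2) + 1$, so I would split
\begin{align*}
    \sum_{p\nmid N} \mcF(p,2)\ =\ \sum_{p\nmid N}\frac{\log^2 p}{p\log^2 q}\widehat\Phi\!\left(\frac{\log p}{\log q}\right)^{\!2} \ +\ \sum_{p\nmid N}\frac{\lambda_f(p^2)\log^2 p}{p\log^2 q}\widehat\Phi\!\left(\frac{\log p}{\log q}\right)^{\!2}.
\end{align*}

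For the first piece, I would apply Lemma \ref{ten min int} with the even Schwartz function $\Phi * \Phi$, whose Fourier transform is $\widehat\Phi^2$ (and which is compactly supported on $(-2\sigma,2\sigma)$). This produces the stated main term $\tfrac{1}{2}\int |u|\widehat\Phi(u)^2\,du$ with an error of $O(\log\log q/\log q)$, and has no $f$-dependence whatsoever.

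For the second piece, I would apply Lemma \ref{lemma:boundingpsq} with $F = \widehat\Phi^2$, which is smooth and compactly supported since $\widehat\Phi$ is. This gives $O(\log\log q/\log q)$ uniformly in $f$, because the bound on $(L'/L)(1+1/\log q + it, \mathrm{sym}^2 f)$ from Theorem 5.17 of \cite{IK} (applied under GRH for $L(s,\mathrm{sym}^2 f)$) depends on the conductor only through $\log\log$ factors.

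Finally, both cited lemmas sum over primes coprime to $q$, whereas the statement requires primes coprime to $N$ with $N \mid q p_1 \cdots p_n$. The discrepancy consists of at most $n$ extra excluded primes not dividing $q$; each contributes $O(1/\log^2 q)$ since $\log^2 p/p \ll 1$ and $\widehat\Phi$ is bounded, so their total contribution is absorbed into the error. I do not anticipate a real obstacle: the only delicate point is uniformity in $f$, and that is precisely what the $f$-free bound in Lemma \ref{lemma:boundingpsq} already provides.
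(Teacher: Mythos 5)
Your proposal is correct and follows essentially the same route as the paper: split via the Hecke relation $\lambda_f(p)^2 = \lambda_f(p^2) + 1$, evaluate the constant piece with Lemma~\ref{ten min int} and bound the $\lambda_f(p^2)$ piece with Lemma~\ref{lemma:boundingpsq} applied to $F = \widehat\Phi^2$. (Two minor notes, neither of which affects correctness: $\widehat{\Phi*\Phi}=\widehat\Phi^2$ is in fact supported in $(-\sigma,\sigma)$, not $(-2\sigma,2\sigma)$; and your discussion of the $N$ vs.\ $q$ mismatch only treats primes dividing $N$ but not $q$, whereas the cited lemmas also implicitly require $q\mid N$ so that every $p\nmid N$ has $p\nmid q$ and the Hecke relation applies---this is automatic in every application, since $N$ is always of the form $q$ times a product of the $p_j$'s, and the paper elides the point entirely.)
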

\begin{proof}
    We write $\lambda_f(p_i)^2 = \lambda_f(p_i^2)+1$ and use Lemma \ref{lemma:boundingpsq} with $F = \widehat\Phi^2$ to deduce that the $\lambda_f(p_i^2)$ term is $\ll \log\log q(\log q)^{-1}$ and Lemma \ref{ten min int} to evaluate the contribution of the $1$ term.
\end{proof}

\begin{lemma}\label{lemmaforcase1ofcombo}
    All of the sums corresponding to partitions in Case \ref{case1ofcombo} contribute
    \begin{align}
        \frac{1}{2^n(n/2)!} \sum_{\tau\in S_n} \prod_{i=1}^{n/2} \int_{-\infty}^\infty |u|\widehat\Phi_{\tau(2i-1)}(u)\widehat\Phi_{\tau(2i)}(u)\,du.
    \end{align}
\end{lemma}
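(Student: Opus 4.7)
The plan is to expand $\prod_{i=1}^n \mathscr{P}(f;\Phi_i)$ as a sum over $n$-tuples of primes coprime to $q$ and isolate those tuples in which exactly $n/2$ distinct primes each appear twice; this forces $n$ to be even. I would parametrize such a tuple by a perfect matching $\pi$ of $\{1,\ldots,n\}$ (encoding which index pairs share a prime) together with an ordered tuple $(p_1,\ldots,p_{n/2})$ of distinct primes. Converting the sum over $\pi$ to a sum over $\tau\in S_n$ via the identification $\pi = \{\{\tau(2i-1),\tau(2i)\} : 1\leq i\leq n/2\}$, each matching has exactly $2^{n/2}(n/2)!$ preimages, so the Case \ref{case1ofcombo} contribution to $\langle\prod_i \mathscr{P}(f;\Phi_i)\rangle_*$ becomes
\begin{align*}
\frac{1}{2^{n/2}(n/2)!}\sum_{\tau\in S_n}\left\langle\sum_{\substack{p_1,\ldots,p_{n/2}\nmid q\\ p_i\neq p_j}}\prod_{i=1}^{n/2}\frac{\lambda_f(p_i)^2\log^2 p_i}{p_i\log^2 q}\widehat\Phi_{\tau(2i-1)}\!\left(\tfrac{\log p_i}{\log q}\right)\widehat\Phi_{\tau(2i)}\!\left(\tfrac{\log p_i}{\log q}\right)\right\rangle_*.
\end{align*}

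Next, I would apply the Hecke relation $\lambda_f(p)^2 = \lambda_f(p^2)+1$, valid for $p\nmid q$, to expand each factor $\lambda_f(p_i)^2$, producing $2^{n/2}$ subsums indexed by subsets $S\subseteq\{1,\ldots,n/2\}$. The main term $S=\emptyset$ is $f$-independent: I would remove the distinctness condition $p_i\neq p_j$ by inclusion--exclusion, noting that collision terms produce absolutely convergent sums of the form $\sum_p (\log p)^{2k}/p^2$ and contribute $O(\log^{-2}q)$ savings. The resulting unrestricted sum factors across $i$, and Lemma \ref{ten min int} evaluates each factor as $\tfrac12\int_{-\infty}^\infty|u|\widehat\Phi_{\tau(2i-1)}(u)\widehat\Phi_{\tau(2i)}(u)\,du + O(\log\log q/\log q)$. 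For $S\neq\emptyset$, multiplicativity of $\lambda_f$ at distinct coprime primes gives $\prod_{i\in S}\lambda_f(p_i^2) = \lambda_f(\prod_{i\in S}p_i^2)$; after factoring out the $i\notin S$ sums (each $O(1)$ by Lemma \ref{ten min int}), I would iterate Lemma \ref{lemma:boundingpsq} on the remaining sums to obtain an $f$-uniform bound of size $O((\log\log q/\log q)^{|S|})$, which survives the harmonic and $q$-averages and is absorbed into the error.

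Collecting the combinatorial prefactor $1/(2^{n/2}(n/2)!)$ with the $n/2$ factors of $1/2$ from the main-term integrals produces the claimed $1/(2^n(n/2)!)$ coefficient. The main obstacle is maintaining the $f$-uniformity of the bound in Lemma \ref{lemma:boundingpsq} when iterating it over the primes in $S$, and confirming that the inclusion--exclusion corrections for distinctness combine cleanly with the $S\neq\emptyset$ error terms so that no spurious contribution leaks into the main-term coefficient.
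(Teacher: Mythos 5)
Your proposal is correct and reaches the same answer, but takes a more elaborate route than the paper. The paper's argument is essentially: for each perfect matching of $\{1,\ldots,n\}$, iterate Lemma \ref{bounding a=2} one prime at a time, observing that its $N$-coprimality hypothesis ($N\mid qp_1\cdots p_n$) already absorbs the distinctness condition $p_{a_i}\neq p_{a_j}$ — one simply takes $N=q\prod_{j\neq i}p_{a_j}$ — and that the error term is uniform in both $f$ and $N$. This lets the paper peel off one factor $\frac12\int|u|\widehat\Phi_{a_i}\widehat\Phi_{b_i}\,du + O(\log\log q/\log q)$ at a time and then observe that any cross-term with at least one error factor vanishes in the limit, with no inclusion--exclusion or explicit Hecke multiplicativity required. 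Your version instead expands $\lambda_f(p)^2 = \lambda_f(p^2)+1$ across all $n/2$ factors at once, tracks the $2^{n/2}$ subsets $S$, and removes distinctness by inclusion--exclusion; this works, but the claimed $O((\log\log q/\log q)^{|S|})$ bound for the $S\neq\emptyset$ terms is an overstatement — after you apply Lemma \ref{lemma:boundingpsq} to the innermost prime you must take absolute values and hence lose the cancellation for the remaining $|S|-1$ sums, so in fact you only get $O(\log\log q/\log q)$ — though that weaker bound is still ample. The combinatorial conversion from matchings to $\tau\in S_n$ with stabilizer $2^{n/2}(n/2)!$, and the assembly of the final coefficient $1/(2^n(n/2)!)$ from that prefactor together with the $(1/2)^{n/2}$ from the integrals, matches the paper exactly.
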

\begin{proof}
    For each partition of the set $\{1,2,\ldots,n\}$ into $n/2$ disjoint sets of size $2$, say $\{\{a_1,b_1\}, \{a_2,b_2\},\\ \cdots, \{a_{n/2}, b_{n/2}\}\},$ we have a sum
    \begin{align}
        \mcS \ = \ \nonsense{0}\hkq \sum_{\substack{p_{a_i} = p_{b_i} \nmid q\\ p_{a_i}\neq p_{a_j}}} \prod_{i=1}^{n/2} \mcF(p_{a_i}, 2).
    \end{align}
    By applying Lemma \ref{bounding a=2} to each $\mcF,$ we conclude that
    \begin{align}
        \mcS \ = \ \nonsense{0}\hkq \prod_{i=1}^{n/2}\left[\frac12 \int_{-\infty}^\infty |u|\widehat\Phi_{a_i}(u)\widehat\Phi_{b_i}(u)\,du + O\left(\frac{\log\log q}{\log q}\right)\right].
    \end{align}
    Expanding this out, any term with at least one $O(\log\log q/\log q)$ term is $\ll \log\log Q/\log Q$ as $Q\to \infty.$ Hence,
    \begin{align}
        \lim_{Q \to \infty} \mcS \ = \ \prod_{i=1}^{n/2}\frac12 \int_{-\infty}^\infty |u|\widehat\Phi_{a_i}(u)\widehat\Phi_{b_i}(u)\, du.
    \end{align}
    Each $\tau \in S_n$ can act on the partition described above by sending $\{ \{ a_i, b_i\}: 1\leq i\leq n/2\}$ to $\{ \{ \tau(a_i), \tau(b_i)\}: 1\leq i\leq n/2\}.$ The stabilizer of this group action has order $2^{n/2}(n/2)!$ and this completes the proof.
\end{proof}

\begin{lemma}\label{lemmaforcase2ofcombo}
    The sums corresponding to partitions in Case \ref{case2ofcombo} are $O\left((\log Q)^{-3}\right).$
\end{lemma}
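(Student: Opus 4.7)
The plan is to bound each sum term-by-term in absolute value, drop the distinctness condition on the primes, and then use Lemmas \ref{bounding a>2} and \ref{bounding a=2} to factor the bound into independent prime sums. Concretely, for a partition $a_1 \geq a_2 \geq \cdots \geq a_\ell$ in Case \ref{case2ofcombo}, I would estimate
\begin{align*}
    \left|\ \sum_{\substack{p_1,\ldots,p_\ell \nmid q \\ p_i \neq p_j}} \prod_{i=1}^\ell \mcF(p_i, a_i)\ \right|\ \leq\ \prod_{i=1}^\ell \sum_{p_i \nmid q} |\mcF(p_i, a_i)|,
\end{align*}
which is valid simply by positivity. Each factor on the right is independent of the others, so the remaining task is to estimate each single-prime sum.

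For each index $i$ with $a_i \geq 3$, I would observe that $|\lambda_f(p_i)| \leq 2$ together with $\Phi$ bounded gives $|\mcF(p_i,a_i)| \ll (\log p_i)^{a_i}/(p_i^{a_i/2} (\log q)^{a_i})$, which sums to $O((\log q)^{-a_i})$ since $a_i/2 \geq 3/2$ makes the series converge absolutely (this is exactly the content of Lemma \ref{bounding a>2}, whose proof carries over verbatim with absolute values). For each index $i$ with $a_i = 2$, the same $|\lambda_f(p_i)| \leq 2$ bound gives $\sum_{p_i \nmid q} |\mcF(p_i,2)| \ll \sum_{p_i} (\log p_i)^2/(p_i (\log q)^2) \Phi(\log p_i/\log q)^2 \ll 1$ by the prime number theorem or by the same computation underlying Lemma \ref{bounding a=2}. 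Multiplying the factors,
\begin{align*}
    \prod_{i=1}^\ell \sum_{p_i \nmid q} |\mcF(p_i, a_i)|\ \ll\ \prod_{i:\, a_i \geq 3} (\log q)^{-a_i}\ \leq\ (\log q)^{-3},
\end{align*}
where the last inequality uses the Case \ref{case2ofcombo} hypothesis that at least one $a_i \geq 3$.

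Since all these bounds are independent of $f \in \mcH_k(q)$, they survive the harmonic average $\hkq$ (which has total mass $O(1)$), and then averaging over $q \asymp Q$ with $N(Q) \asymp Q$ preserves the estimate, giving an overall contribution of $O((\log Q)^{-3})$. The only potential obstacle I see is the distinctness constraint, but because every $\mcF(p_i,a_i)$ has a uniform pointwise majorant, passing to absolute values and dropping the constraint loses nothing beyond a constant; no combinatorial inclusion-exclusion or reduction to a lower case of the induction is required. Thus this case is handled at the level of trivial bounds, and the genuine combinatorial work is reserved for Cases \ref{case3ofcombo}--\ref{case5ofcombo}.
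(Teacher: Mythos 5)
Your proposal is correct and takes essentially the same route as the paper: take absolute values, appeal to Lemma \ref{bounding a>2} for the parts $\geq 3$ and Lemma \ref{bounding a=2} (or the trivial $O(1)$ bound it implies) for the parts equal to $2$, and observe that the resulting uniform-in-$f$ bound survives the harmonic and $q$-averages. Dropping the distinctness constraint after taking absolute values is exactly the right (and harmless) simplification, and the $N \mid qp_1\cdots p_n$ flexibility built into those lemmas is there precisely so that either your factored form or an iterated form of the same estimate goes through.
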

\begin{proof}
    This follows from taking absolute values and using Lemmas \ref{bounding a>2} and \ref{bounding a=2}.
\end{proof}

\begin{lemma}\label{lemmaforcase3ofcombo}
    The sums corresponding to partitions in Case \ref{case3ofcombo} are $O\left((\log Q)^{\e-1}\right).$
\end{lemma}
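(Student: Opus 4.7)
The plan is to drop the distinctness constraint on the primes via inclusion--exclusion, factor the resulting main term inside the harmonic average, and exploit uniform-in-$f$ bounds on the relevant prime sums together with the one-level density of \cite{BCL} and the $n=2$ case of Theorem \ref{onylallpairedoralldifferent}.

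Let the partition of $n$ be $(a_1,\ldots,a_{\ell-1},1)$ with $a_i\geq 2$ for $i<\ell$, so that
\begin{align*}
\mcS \ =\ \nonsense{0}\hkq \mathop{\sum\cdots\sum}_{\substack{p_1,\ldots,p_\ell \nmid q\\ p_i\neq p_j}} \prod_{i=1}^{\ell-1}\mcF(p_i,a_i)\,\mcF(p_\ell,1).
\end{align*}
Inclusion--exclusion over set partitions $\pi$ of $\{1,\ldots,\ell\}$ rewrites $\mcS$ as the trivial-partition contribution $\mcS_0$ plus corrections indexed by $\pi$'s having at least one block of size $\geq 2$. On any such block $B$, the merged exponent $\sum_{i\in B} a_i\geq 3$, since every $a_i$ with $i<\ell$ is $\geq 2$. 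Lemma \ref{bounding a>2} then bounds the corresponding factor pointwise by $O((\log q)^{-3})$ while Lemmas \ref{bounding a>2} and \ref{bounding a=2} handle the remaining blocks, so the union of correction terms contributes $O((\log Q)^{-3})$.

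The trivial-partition contribution factors inside the harmonic average:
\begin{align*}
\mcS_0 \ =\ \nonsense{0}\hkq \left(\prod_{i=1}^{\ell-1}\sum_{p_i\nmid q}\mcF(p_i,a_i)\right)\mathscr P(f;\Phi).
\end{align*}
If some $a_i\geq 3$, Lemma \ref{bounding a>2} gives a pointwise factor $\ll (\log q)^{-3}$, and Cauchy--Schwarz on the probability-like measure $\nonsense{0}\hkq$ combined with $\langle\mathscr P(f;\Phi)^2\rangle_*=O(1)$ (the $n=2$ case of Theorem \ref{onylallpairedoralldifferent}, valid since the support satisfies $\sigma\leq 3/2(n-1)\leq 3/4<1$) yields $\mcS_0\ll (\log Q)^{-3}$. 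Otherwise $a_i=2$ for every $i<\ell$, and Lemma \ref{bounding a=2} lets us write $\sum_{p_i\nmid q}\mcF(p_i,2) = M + R_{f,q,i}$ with $M := \tfrac12\int|u|\widehat\Phi(u)^2\,du$ a fixed constant and $|R_{f,q,i}|\ll \log\log q/\log q$ pointwise in $f$. Expanding the product produces one purely deterministic term $M^{\ell-1}\langle\mathscr P(f;\Phi)\rangle_*$, which by Theorem 1.1 of \cite{BCL} through \eqref{eqn3.1} is $\ll \log\log Q/\log Q$. Every remaining term carries at least one $R_{f,q,i}$; pulling out the pointwise bound and applying Cauchy--Schwarz against $\langle\mathscr P(f;\Phi)^2\rangle_* = O(1)$ bounds each by $\ll \log\log Q/\log Q$ as well.

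The principal subtlety is that each prime-sum bound must be uniform in $f$; this uniformity (resting on GRH for $L(s,\mathrm{sym}^2 f)$ through Lemma \ref{lemma:boundingpsq}) is precisely what lets the $f$-independent constant $M^{\ell-1}$ be extracted from $\hkq$ so that the BCL estimate on $\langle\mathscr P(f;\Phi)\rangle_*$ can be invoked; without this decoupling the remaining average would only afford the much weaker pointwise bound $|\mathscr P(f;\Phi)|\ll (\log q)^{1+\e}$ coming from Lemma \ref{lemma:mostbeautifullemmaintheworld}, which is inadequate. Summing the correction and main-term contributions yields $\mcS\ll \log\log Q/\log Q = O((\log Q)^{\e-1})$, as claimed.
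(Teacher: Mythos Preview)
Your approach is sound and, modulo one gap, gives a valid alternative to the paper's induction on $\ell$. The gap is in your treatment of the inclusion--exclusion corrections: you assert that ``Lemmas \ref{bounding a>2} and \ref{bounding a=2} handle the remaining blocks,'' but when the index $\ell$ (with $a_\ell=1$) sits in a singleton block of a non-trivial partition $\pi$, the corresponding factor is $\sum_{p\nmid q}\mcF(p,1)=\mathscr P(f;\Phi)$, which neither lemma bounds pointwise---indeed, taking absolute values on this factor costs $q^{\sigma/2}$. The fix is immediate with the machinery you already use: in that case the non-singleton block of $\pi$ merges at least two indices with $a_i\geq 2$, so some $b_B\geq 4$, and the correction term has exactly the shape of your $\mcS_0$ in the ``some $a_i\geq 3$'' case; your Cauchy--Schwarz against $\langle\mathscr P^2\rangle_*=O(1)$ then gives $O((\log Q)^{-3})$ for these corrections as well. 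With that patch the argument is complete.

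The paper proceeds differently in the all-$2$'s subcase: it inducts on $\ell$, peeling off two of the exponent-$2$ factors via Lemma \ref{bounding a=2}, splitting into the four $C_iG_j$ cross-terms, and invoking the inductive hypothesis on the $(\ell-1)$- or $(\ell-2)$-part sums. Your route---dropping distinctness globally by M\"obius inversion, then handling the fully factored $\mcS_0$ directly with the $n=1$ result of \cite{BCL} and the $n=2$ second-moment bound via Cauchy--Schwarz---is more direct and avoids the auxiliary induction on $\ell$ entirely. Both arguments rest on the same ingredients (the uniform-in-$f$ estimates of Lemmas \ref{bounding a>2}, \ref{bounding a=2}, \ref{lemma:mostbeautifullemmaintheworld} under GRH for $L(s,\mathrm{sym}^2 f)$, together with the $n=1,2$ cases), so neither buys extra range; yours is simply a cleaner packaging once the singleton-block case is addressed.
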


\begin{lemma}\label{lemmaforcase4ofcombo}
    The sums corresponding to partitions in Case \ref{case4ofcombo} are $O\left((\log Q)^{\e-1}\right).$
\end{lemma}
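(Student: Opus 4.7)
The plan is to bound $\mcS$ directly via the Petersson trace formula, applied term-by-term after Hecke expansion of $\prod_i \lambda_f(p_i)^{a_i}$. Let $I_1 = \{i \colon a_i = 1\}$ with $|I_1| = k \geq 2$ and $I_2 = \{i \colon a_i \geq 2\}$ with $|I_2| = m \geq 1$; the number of distinct primes is $\ell = k + m$, and $\ell \leq n - 1$ since $m \geq 1$ and $a_j \geq 2$ for $j \in I_2$. The critical observation is that $k \geq 2$ forces $M := \prod_i p_i^{b_i} \geq 2^k \geq 4$ in every Hecke-expansion summand, so the Kronecker delta $\delta(M, 1)$ never contributes and only the Petersson error term survives.

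Concretely, I would use the Chebyshev-type identity $\lambda_f(p)^a = \sum_{0 \leq b \leq a,\, b \equiv a \pmod 2} c_{a,b}\, \lambda_f(p^b)$ with bounded integer $c_{a,b}$, combined with Hecke multiplicativity across the distinct primes coprime to $q$, to write
\[
\prod_{i=1}^\ell \lambda_f(p_i)^{a_i} \ = \ \sum_{\vec b} c(\vec b)\, \lambda_f(M_{\vec b}), \qquad M_{\vec b} \ := \ \prod_{i=1}^\ell p_i^{b_i}.
\]
Support conditions \eqref{cond:smallsigma} and \eqref{cond:bipart} ensure $M_{\vec b} \leq q^2$ throughout, so Lemmas \ref{lemma:their2.2} and \ref{lemma:lemma2.3ofbcl} yield $|\Delta_q^*(M_{\vec b}, 1)| \ll M_{\vec b}^{1/2+\e}/q^{3/2}$. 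Bounding the resulting prime sums by standard Mertens/PNT estimates (for each $i$, $\sum_{p \leq q^{\sigma_i}}(\log p)^r/p^s \ll q^{\sigma_i(1-s)}(\log q)^{r-1}$ when $s<1$), the dominant contribution (when $b_i = a_i$ throughout) is of order $q^{S - 3/2}(\log Q)^{-k-m}$, where $S = \sum_{j=1}^{\ell}\sigma^*_j$ with $\sigma^*_j$ denoting the minimum of $\sigma_i$ across the slots in the $j$-th block. Since $\ell \leq n-1$, Condition \eqref{cond:smallsigma} (applied after choosing the omitted index $i_0$ outside the representative set) gives $S \leq 3/2$, so $q^{S-3/2} \leq 1$; combined with $k+m \geq 3$, this yields $\mcS \ll (\log Q)^{-3}$, which is $O((\log Q)^{\e-1})$. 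Contributions from $\vec b$ with some $b_j < a_j$ lose a factor of $q^{\sigma^*_j}$ from the $j$-th prime sum and are therefore strictly smaller.

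The main obstacle is the careful bookkeeping required to verify uniformly that $M_{\vec b} \leq q^2$ and $S \leq 3/2$, particularly when the $\sigma_i$'s are non-equal and one must invoke Condition \eqref{cond:bipart} in addition to \eqref{cond:smallsigma}. As a complementary fallback, the paper's suggested dichotomy --- extracting $(\log Q)^{-a_j}$ decay via Lemma \ref{bounding a>2} when some $a_j \geq 3$, and applying H\"older's inequality to separate $I_1$ from $I_2$ sums when all $a_j = 2$, with cross-distinctness collisions reducing via induction on $k$ to either Case \ref{case3ofcombo} or Case \ref{case4ofcombo} with smaller $k$ --- would handle any edge configurations where the direct Petersson estimate is marginal.
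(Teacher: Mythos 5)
Your main plan has a genuine gap, and the ``fallback'' paragraph does not fill it.

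First, the bound you state for the averaged Petersson term is not correct. Lemma \ref{lemma:lemma2.3ofbcl} expresses $\Delta_q^*(M_{\vec b},1)$ as a sum over decompositions $q = L_1L_2d$ and over $e\mid L_2^\infty$, and you must apply Lemma \ref{lemma:their2.2} to $\Delta_d(M_{\vec b}e^2,1)$ with the \emph{inner} level $d$, not $q$. For small $d$ (say $d$ much smaller than $\sqrt{M_{\vec b}}$), Lemma \ref{lemma:their2.2} gives only $\Delta_d(M_{\vec b}e^2,1)\ll \tau(d)(M_{\vec b}e^2)^{1/4+\e}/d$, and after including the weights $\mu(L_1L_2)/(L_1L_2)=d/q$ the resulting bound on $\Delta_q^*(M_{\vec b},1)$ degrades to $\ll M_{\vec b}^{1/4+\e}/q^{1-\e}$, not $\ll M_{\vec b}^{1/2+\e}/q^{3/2}$.

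Second, even granting your optimistic bound, the estimate fails at the boundary of Conditions \eqref{cond:smallsigma} and \eqref{cond:bipart}. Take $n=4$, $\sigma_1=\cdots=\sigma_4=1/2$, and the partition $2+1+1$: then $\ell=3=n-1$, and $S=\sum_{j=1}^{\ell}\sigma^*_j=3/2$ \emph{exactly} (this is permitted, since Condition \eqref{cond:smallsigma} is an inequality with equality allowed, and the standard choice $\sigma=3/(2(n-1))$ hits it). Your dominant term is then $q^{S-3/2+O(\e)}(\log Q)^{-\ell}=q^{O(\e)}(\log Q)^{-3}$, which is not $O((\log Q)^{\e-1})$ --- the spurious $q^\e$ diverges. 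The lemma must hold uniformly up to and including this boundary, so a direct Petersson estimate cannot close the case.

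The paper's proof (Appendix \ref{app:combo}) avoids this problem entirely, and your ``fallback'' sentence does not capture the operative mechanism. The paper first adds back unwanted collisions (reducing to Case \ref{case3ofcombo} or a lower $\ell$), then applies Lemma \ref{bounding a=2} to each $a_i=2$ block and packages the resulting error terms, together with the $a_i\geq 3$ blocks, into a factor $\mcD(Q)\ll (\log Q)^{\e-1}$ that is \emph{independent of $f$ and $q$}. The remaining sum is a product of $k$ prime sums each with $a_i=1$. The crucial step (Lemma \ref{lemma:boundingwhenindep}) is then to choose exponents $\xi_i^{-1}\in 2\zz\cap\{1,\ldots,m+1\}$ summing to $1$ (Lemma \ref{gimme xis but good this time}) and apply the generalized H\"older inequality (Lemma \ref{holder}) to the sum over $q$ and $f$, splitting the product into $\xi_i^{-1}$-th centered moments of the one-level density, each of strictly lower order $\xi_i^{-1}\leq m+1\leq n-1$ and therefore $O(1)$ by the inductive hypothesis. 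This is \emph{not} ``H\"older to separate $I_1$ from $I_2$''; it is H\"older to redistribute the $I_1$ product across several already-established lower moments. The decaying factor $\mcD(Q)$ then supplies the $O((\log Q)^{\e-1})$ bound. This delegation to lower moments, rather than any form of Petersson estimate, is what makes the argument work at the boundary, and it is the idea missing from your proposal.
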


We remind the reader that the proof of Lemmas \ref{lemmaforcase3ofcombo} and \ref{lemmaforcase4ofcombo} can be found in Appendix \ref{app:combo}. 
All that remains to be proven to finish the induction is Case \ref{case5ofcombo}, i.e., where $p_1,\ldots,p_n$ are distinct and coprime to $q$. This comprises the remainder of the paper.

\begin{remark}
    In comparison to other cases, Case \ref{case5ofcombo} is by far the most difficult and we use the Kuznetsov trace formula to handle this case. 
\end{remark}

\section{Converting Sums over Distinct Primes to Spectral Terms}\label{section4}
In this section, we prove the following proposition to show that the sum over distinct primes contributes negligibly as $Q \rightarrow \infty$. We follow \cite[\S 6]{BCL}: we apply the Petersson trace formula, truncate in $L_1L_2,$ and then again in $e,$ add back in the primes that divide the level, and perform several substitutions to apply the Kuznetsov trace formula.

\begin{prop}\label{prop42}
Fix $n>1$ and assume GRH for $L(s, f)$ and for Dirichlet $L$-functions. Let $\Phi_i$ be even Schwartz functions with $\widehat\Phi_i$ compactly supported in $(-\sigma_i,\sigma_i)$, where the $\sigma_i$'s satisfy Conditions \eqref{cond:smallsigma} and \eqref{cond:bipart}.
Then
\begin{align} 
    \Sigma_1\ \coloneqq \ \nonsense{n} \hkq \sum_{\substack{p_1,\ldots,p_n\nmid q\\ p_i\neq p_j}} \prod_{i=1}^n \phinonsenselambda{i} \ \ll\ \frac{1}{\log Q}.
\end{align}
\end{prop}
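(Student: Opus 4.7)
The plan is to follow the blueprint of \cite{BCL}, extending it to handle products over $n$ distinct primes. Since $p_1,\ldots,p_n$ are pairwise distinct and coprime to $q$, Hecke multiplicativity \eqref{eqn:Heckemult} collapses $\prod_{i=1}^n \lambda_f(p_i) = \lambda_f(P)$ with $P \coloneqq \prod_{i=1}^n p_i$. The inner harmonic sum then equals $\Delta_q^*(P,1)$, and Lemma \ref{lemma:lemma2.3ofbcl} rewrites it as a double sum over factorizations $q = L_1 L_2 d$ and $e \mid L_2^\infty$ of Petersson averages $\Delta_d(Pe^2,1)$.

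Next I will apply the Petersson trace formula (Lemma \ref{lemma:Peterssontraceformula}). Because $n \geq 1$ and the primes are distinct, $Pe^2 > 1$, so the diagonal term $\delta(Pe^2,1)$ vanishes and only the Kloosterman-sum piece survives. Lemma \ref{lemma:their2.2} then gives crude a priori bounds that permit truncating the $L_1L_2$ and $e$ sums to short dyadic ranges (up to a small power of $Q$) at negligible cost. Before invoking Kuznetsov I must also unrestrict the sum from $p_i \nmid q$ to all primes, by adding back and then bounding the contribution of tuples in which some $p_i \mid d$; this is done using $|\lambda_f(p)| \leq 2$ and the compact support of $\widehat\Phi_i$, and this is precisely where Condition \eqref{cond:smallsigma} on the $\sigma_j$ enters.

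After inserting a smooth dyadic partition of unity in each $p_i$, the sum becomes a finite collection of sums of the shape
\begin{align}
    \sum_{c \geq 1} \frac{S(Pe^2,1;cd)}{cd}\,\phi\!\left(\frac{4\pi\sqrt{Pe^2}}{cd}\right),
\end{align}
with $\phi$ a smooth, compactly supported function depending on the dyadic scales $P_i$. The Kuznetsov trace formula (Lemma \ref{lemma:kuznetsov}) then converts this into three spectral pieces: a holomorphic-cuspform part, a Maass-cuspform (discrete) part, and an Eisenstein-series (continuous) part. The holomorphic and Maass contributions will be controlled by Proposition \ref{Proposition 6.2}, proved in Section \ref{sec: DISC}, while the Eisenstein contribution will be controlled by Proposition \ref{prop:their6.3}, proved in Section \ref{sec:EisensteinSeries}; both bounds require Condition \eqref{cond:bipart} on the supports.

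The hard parts therefore live in Propositions \ref{Proposition 6.2} and \ref{prop:their6.3} rather than in this reduction. Specifically, (i) adding back primes dividing the level is trivial when $n=1$ and handled by direct Petersson estimates, but for $n \geq 2$ it produces $O(q^{(n-1)\sigma}\log q)$ many nontrivial terms that force the restriction $\sigma \leq 3/(2(n-1))$ coming from \eqref{cond:smallsigma}; and (ii) the Eisenstein analysis must cope with novel terms appearing only for $n \geq 2$ in which the Fourier coefficients at $1/2+it$ produce competing signs on $\prod_\alpha P_\alpha^{1/2 \pm it}$, so that a single contour shift cannot simultaneously reduce every factor, ultimately forcing Condition \eqref{cond:bipart}.
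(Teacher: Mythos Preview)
Your outline is essentially the paper's proof: Hecke multiplicativity, Lemma~\ref{lemma:lemma2.3ofbcl}, Petersson, truncation, filling in primes, dyadic partition, Kuznetsov, and then deferral to Propositions~\ref{Proposition 6.2} and~\ref{prop:their6.3}; your identification of where Conditions~\eqref{cond:smallsigma} and~\eqref{cond:bipart} enter is also correct.

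Two details deserve correction. First, the truncation in $L_1L_2$ and $e$ in the paper is carried out \emph{before} applying Petersson, using the GRH bound of Lemma~\ref{lemma:mostbeautifullemmaintheworld} on the inner prime sum (which is legitimate since for $f^{(g)}\in B_k(d)$ and $(p_i,d)=1$ one has $\lambda_{f^{(g)}}(p_i)=\lambda_f(p_i)$ with $f$ the underlying newform); you instead invoke Lemma~\ref{lemma:their2.2}. Second, and more substantively, your filling-in-primes step is misdescribed: the tuples to be added back are those with some $p_i\mid q$ (not $p_i\mid d$), and the bound does \emph{not} come from $|\lambda_f(p)|\le 2$ --- that would neither give the needed $d^{-3/2}$ saving nor even be valid for all $f\in B_k(d)$. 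The paper instead bounds $\Delta_d(e^2,\prod p_i)$ directly via Lemma~\ref{lemma:their2.2}, obtaining the factor $d^{-3/2}\sqrt{e^2\prod p_i}$, and it is this $d^{-3/2}$ together with partial summation over the remaining primes that yields $\mathcal E\ll Q^{\sum_{i>\ell}\sigma_i-3/2+\e}$, whence Condition~\eqref{cond:smallsigma}.
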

We now prove Proposition \ref{prop42}.
We use Hecke multiplicativity to combine the $\lambda_f(p_i)$ since $(p_i, p_j) = (p_i,q) = 1$ for $i\neq j$. Then, we apply Lemma \ref{lemma:lemma2.3ofbcl} to $\Delta_d^*(1,\prod p_i)$ and use Hecke multiplicativity once more to obtain
\begin{align}
    \Sigma_1\ &=\ \nonsense{n} \sum_{\substack{q = L_1L_2d\\L_1\mid q_1\\L_2\mid q_2}} \frac{\mu(L_1L_2)}{L_1L_2} \prod_{\substack{p\mid L_1\\ p^2\nmid d}} \Bigg(1-\frac{1}{p^2}\Bigg)^{-1}\sum_{e\mid L_2^\infty} \frac{1}{e}  \nonumber \\
    &\hspace{.5cm} \times \bkd \lambda_f(e^2) \sum_{\substack{p_1,\ldots,p_n\nmid q\\ p_i\neq p_j}}  \prod_{i=1}^n\phinonsenselambda{i}.
\end{align}

\subsection{Truncation in \texorpdfstring{$L_1L_2$}{} and \texorpdfstring{$e$}{}}
We next bound the contribution from $L_1L_2 \geq \mcL_0$ and $e\geq E$ for
\begin{equation}
    \mcL_0\ =\ (\log Q)^{n+5} \hspace{1cm} \text{ and } \hspace{1cm} E \ =\  (\log Q)^{n+2}.
\end{equation}
We first truncate in $L_1L_2$ and then show that for $L_1L_2 \leq \mcL_0$, the sum over $e > E$ contributes negligibly as $Q \rightarrow \infty$. 

By Lemma \ref{lemma:mostbeautifullemmaintheworld} with $N=q$, each $z_i = 1/2$, and $X=q^{\max \sigma_i}$, we obtain 
\begin{align}
    \sumoverpdistinct \prod_{i=1}^n \phinonsenselambda{i} \ \ll \ (\log Q)^{2n+\e}.
\end{align}
We then proceed as in \cite[p.~20]{BCL}, mutatis mutandis.

\subsection{Filling in primes}\label{section:filling in primes}
We now remove the dependence of the prime sums on $q$ in order to apply the Kuznetsov trace formula. To do this, we add back the sums over distinct primes dividing $q$ and show that this changes the value of $\Sigma_1$ by a negligible amount for test functions whose Fourier transform has sufficiently small support. For $\ell \geq 1$, it suffices to bound
    \begin{align} 
        \mathcal{E}\ &\coloneqq\ \nonsensen \sum_{\substack{p_1,\ldots, p_{\ell}\mid q\\ p_i\neq p_j}}\ \sum_{\substack{p_{\ell+1},\ldots, p_{r}\nmid q\\ p_i\neq p_j}} \prod_{i=1}^n \phinonsense{i} \sum_{\substack{q = L_1L_2d \\ L_1 \mid q_1 \\ L_2 \mid q_2 \\ L_1L_2 \leq \mcL_0}} \frac{\mu(L_1L_2)}{L_1L_2} \nonumber \\
        &\hspace{+0.5cm} \times\prod_{\substack{p | L_1 \\ p^2 \nmid d}}\left(1-\frac{1}{p^2}\right)^{-1} \sum_{\substack{e \mid L_2^\infty \\ e<E}}\frac{1}{e} \sum_{f \in B_k(d)}\mkern -17mu ^h \lambda_f(e^2)\lambda_f\left(\prod p_i\right).
    \end{align}
From $e \mid L_2^\infty$ and $(L_2, d) = 1$, we conclude $(e^2, \prod p_i, d) = 1$. Thus, by Lemma \ref{lemma:their2.2}, 
\begin{align}
    \Delta_d\left(e^2,\prod p_i\right)\ \ll\ \frac{\left(e^2d \prod p_i\right)^\e}{d^{3/2}}\sqrt{e^2\prod p_i} \label{peterssonbound}.
\end{align}
Taking absolute values, using \eqref{peterssonbound}, and partial summing, we obtain
\begin{align}
    \mcE\ &\ll\ Q^{\sum_{i=\ell +1}^n (\sigma_i-\delta_i)-3/2 + \e},
\end{align}
where we recall $\supp(\Phi_i) \subseteq (-\sigma_i,\sigma_i)$ and therefore for some $\delta_i > 0$ we have $\supp(\Phi_i) \subseteq (-(\sigma_i-\delta_i),(\sigma_i-\delta_i))$. 
Since $\ell \geq 1$, by Condition \eqref{cond:smallsigma} we have $\Sigma_1 = \Sigma_2 + O(Q^{-\e}) + O\left((\log Q)^{-1}\right)$,
where
\begin{align}
    \Sigma_2\ &\coloneqq\ \nonsense{n} \sum_{\substack{p_1, \ldots, p_n\\ p_i\neq p_j }} \prod_{i=1}^n \phinonsense{i} \sum_{\substack{q = L_1L_2d \\ L_1 \mid q_1 \\ L_2 \mid q_2 \\ L_1L_2 \leq \mcL_0}} \frac{\mu(L_1L_2)}{L_1L_2}\nonumber \\
    &\hspace{+0.5cm} \times \prod_{\substack{p | L_1 \\ p^2 \nmid d}}\left(1-\frac{1}{p^2}\right)^{-1}\sum_{\substack{e \mid L_2^\infty \\ e<E}}\frac{1}{e}\sum_{f \in B_k(d)}\mkern-17mu^h  \lambda_f(e^2)\lambda_f\left(\prod p_i \right).
\end{align}
\begin{remark}
    Since $f \in B_k(d)$ and we are no longer guaranteed $(p_i, d) = 1$, we cannot use Hecke multiplicativity. Splitting the eigenvalues before dropping this condition is not viable since we look to apply the Petersson trace formula. With this in mind, there are two natural methods for removing this restriction on the supports, which we only require to add back the primes dividing $q$. 

    The first is to improve bounds on Kloosterman-Bessel sums (see \cite[Corollary 2.2]{ILS}). However, analysis of these terms is difficult and notably, if one can achieve support $a/(n-1)$ for this step, then one can similarly attain support $a/n$ for the main result.
    
    The second is to prove a stronger variant of Lemma \ref{lemma:mostbeautifullemmaintheworld} using an improved bound for $L'/L(s, f)$ on the line $\Re(s) = 1/2 + 1/\log q$ (e.g., see \cite[Theorem 5.17]{IK} for a classical bound). Improving the bound to $\ll \log(q+|s|)^{1+\e}$ would give arbitrary support for filling in primes. However, obtaining better bounds requires control over the order of vanishing at $s=1/2$ and the vertical distribution of zeroes near the central point---precisely the question we wish to study.
\end{remark}

\subsection{Applying the Kuznetsov trace formula} \label{sec:Kuznetsov}
The goal of this subsection is to apply the Kuznetsov trace formula to $\Sigma_2$ to change the level of our forms. By the Petersson trace formula (Lemma \ref{lemma:Peterssontraceformula}),
\begin{align}
    \Sigma_2\ &=\ \nonsensen \sumprimesdistinct{1}{n} \prod_{i=1}^n \phinonsense{i} \sum_{\substack{q = L_1L_2d \\ L_1 \mid q_1 \\ L_2 \mid q_2 \\ L_1L_2 \leq \mcL_0}} \frac{\mu(L_1L_2)}{L_1L_2}  \nonumber \\
    &\hspace{+0.5cm} \times \prod_{\substack{p \mid L_1 \\ p^2 \nmid d}}\left(1-\frac{1}{p^2}\right)^{-1} \esum \frac{2\pi  i^{-k}}{e} \sum_{c \geq 1} \frac{S(e^2, \prod p_i; cd)}{cd}J_{k-1}\left(\frac{4\pi\sqrt{e^2\prod p_i}}{cd}\right).
\end{align}
Performing the substitutions in \cite[p.~23-24]{BCL}, where we use a sum over $s$ rather than $n$, yields
    \begin{align}\label{eqnforKF}
        \Sigma_2\ &=\ \frac{2\pi i^{-k}}{N(Q)} \sum_{\substack{L_1,L_2 \\ (L_1,L_2) = 1 \\ L_1L_2 \leq \mcL_0}}\frac{\mu(L_1L_2)}{L_1L_2}\prod_{p \mid L_1} \left(1-\frac{1}{p^2}\right)^{-1}\sum_{r \mid L_1} \frac{\mu(r)}{r^2}\sum_{d \mid L_2} \mu(d) \sumprimesdistinct{1}{n} \prod_{i=1}^n \frac{\log p_i}{\sqrt{p_i}}\esum\frac{1}{e}\nonumber \\
        &\hspace{0.5cm} \times \sum_{c \geq 1}\sum_{s\geq 1}\frac{S(e^2, \prod p_i; cL_1rds)}{cL_1rds}\Psi\left(\frac{L_1^2L_2rds}{Q}\right)&\nonumber \\
        &\hspace{+0.5cm}\times\frac{1}{\log^n\left(L_1^2L_2rds\right)}\prod_{i=1}^n\widehat\Phi_i\left(\frac{\log p_i}{\log \left(L_1^2L_2rds\right)}\right)J_{k-1}\left(\frac{4\pi\sqrt{e^2\prod p_i}}{cL_1rds}\right).
    \end{align}
We put $\mfm = cL_1rds$ so that our sum over $s$ becomes
    \begin{align}
        \sum_{\substack{\mfm \geq 1 \\ \mfm \equiv 0 \ \mathrm{mod} \ cL_1rd}}\frac{S(e^2, \prod p_i; \mfm)}{\mfm\log^n\left(L_1^2L_2rds\right)} \Psi\left(\frac{L_1L_2\mfm}{cQ}\right)&\prod_{i=1}^n \widehat{\Phi}_i\left(\frac{\log p_i}{\log \left(L_1L_2\mfm/c\right)}\right)J_{k-1}\left(\frac{4\pi \sqrt{e^2\prod p_i}}{\mfm}\right) \nonumber\\
        =\ \sum_{\substack{\mfm \geq 1 \\ \mfm \equiv 0 \ \mathrm{mod} \ cL_1rd}} &\frac{S(e^2, \prod p_i; \mfm)}{\mfm}f\left(4 \pi\frac{\sqrt{\prod p_i e^2}}{\mfm}\right),
    \end{align}
where
    \begin{align} \label{fdef}
        f(\xi)\ \coloneqq \ \frac{\Psi\left(4\pi L_1L_2\sqrt{e^2\prod p_i}(cQ\xi)^{-1}\right)}{\log^n\left(4\pi L_1L_2\sqrt{e^2\prod p_i}(c\xi)^{-1}\right)}J_{k-1}(\xi) \prod_{i=1}^n \widehat\Phi_i\left(\frac{\log p_i}{\log \left(4\pi L_1L_2\sqrt{e^2\prod p_i}(c\xi)^{-1}\right)}\right).
    \end{align}
We now introduce $n$ smooth partitions of unity for the sum over primes. This tells us that
\begin{align}
\sumprimesdistinct{1}{n}\cdots\ =\  \sum_{P_1}\mkern-1mu ^d\cdots \sum_{P_n}\mkern-1mu ^d \sumprimesdistinct{1}{n} \prod_{i=1}^n V\left(\frac{p_i}{P_i}\right) \cdots,
\end{align}
where $\sum_{P_i}^d$ denotes a sum over $P_i = 2^j$ for $j \geq 0$, and $V$ is a smooth function supported on $[1/2, 3]$ with $\sum_{P = 2^j} V\left( x/P \right) = 1$
for all real $x \geq 2$. We separate the dependence of $f(\xi)$ on the $p_i$ and put
    \begin{align}\label{eqn:Xdef}
        X\ \coloneqq\  \frac{4 \pi L_1L_2 \sqrt{e^2 \prod P_i}}{cQ}
    \end{align}
and
    \begin{align} \label{Heq}
        H\left(\xi, \lambda_1,\ldots, \lambda_n\right)\ \coloneqq \ \Psi\left(\frac{X}{\xi}\sqrt{\prod \lambda_i}\right) \frac{(\log Q)^n}{ \log ^n\left(\frac{X}{\xi}\sqrt{\prod \lambda_i}Q\right)}\prod_{i=1}^n\widehat\Phi_i\left(\frac{\log \left(\lambda_iP_i\right)}{\log\left(\frac{X}{\xi}\sqrt{\prod \lambda_i}Q\right)}\right).
    \end{align}
Following the proof of \cite[Lemma 6.1]{BCL} mutatis mutandis, we obtain the following bound.
\begin{lemma}\label{lemma:bound on H}
    Let $\widehat{H}$ be the usual Fourier transform of $H$ defined as above. For any $A>0,$
    \begin{align}
        \widehat{H}(u, v_1,\ldots, v_n)\ \ll_A\ \left(\frac{1}{(1 + |u|)\prod (1+|v_i|)}\right)^A.
    \end{align}
\end{lemma}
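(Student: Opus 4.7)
The plan is to establish the bound by repeated integration by parts in the Fourier integral defining $\widehat H$, following the mutatis mutandis reference to Lemma 6.1 of \cite{BCL}. The first step is to pin down the effective support of $H$. Since $\Psi$ is compactly supported on $(0,\infty)$, the factor $\Psi(w)$ with $w \coloneqq \frac{X}{\xi}\sqrt{\prod\lambda_i}$ forces $w$ into a fixed compact subset $[a,b] \subset (0,\infty)$ wherever $H$ is nonzero. Likewise, compact support of each $\widehat\Phi_i$ in $(-\sigma_i,\sigma_i)$ restricts $\lambda_i$ to an interval of the form $[P_i^{-1}(wQ)^{-\sigma_i},\, P_i^{-1}(wQ)^{\sigma_i}]$. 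Thus $H$ has compact support in all $n+1$ variables, so the Fourier integral is absolutely convergent and differentiation under the integral sign is permitted.

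The second step is the derivative estimate
\[
\left|\partial_\xi^{\alpha_0}\,\partial_{\lambda_1}^{\alpha_1}\cdots\partial_{\lambda_n}^{\alpha_n} H\right| \ \ll_{\vec\alpha}\ \frac{1}{\xi^{\alpha_0}\prod_{i=1}^n \lambda_i^{\alpha_i}}
\]
uniformly on $\mathrm{supp}(H)$. This is verified via the chain rule: since $w$ is bounded on the support, $\partial_\xi w = -w/\xi$ and $\partial_{\lambda_i} w = w/(2\lambda_i)$ contribute factors of order $1/\xi$ and $1/\lambda_i$ with bounded coefficients. Derivatives of the $\log^{-n}$ factor and of each $\widehat\Phi_i$ further produce a gain of $1/\log(wQ) \ll 1/\log Q$, which is harmless here (although the $1/\log Q$ savings will play a role in later sections). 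Combining these via the product and chain rules yields the claimed pointwise estimate for any multi-index $\vec\alpha$.

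The third step is to integrate by parts $A$ times in each of the $n+1$ variables, producing
\[
|\widehat H(u,v_1,\ldots,v_n)| \ \leq\ \frac{1}{(2\pi|u|)^A \prod_{j=1}^n (2\pi|v_j|)^A}\int_{\mathrm{supp}(H)} \left|\partial_\xi^A \prod_{i=1}^n \partial_{\lambda_i}^A H\right|\,d\xi\,d\vec\lambda.
\]
Substituting the derivative bound reduces the problem to estimating $\int \xi^{-A}\prod_i \lambda_i^{-A}\,d\xi\,d\vec\lambda$ over $\mathrm{supp}(H)$. Using that $\xi \asymp X\sqrt{\prod\lambda_i}$ (from $w \in [a,b]$) and that each $\lambda_i$ lies in an interval with endpoints that are polynomial-in-$Q$ powers, an elementary computation bounds this integral by a quantity absorbed into the implied $\ll_A$ constant. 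Combining with the trivial estimate $|\widehat H| \leq \int |H| \ll 1$ when any of $|u|, |v_i|$ is $O(1)$ produces the claimed decay $(1+|u|)^{-A}\prod(1+|v_i|)^{-A}$.

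The main obstacle is bookkeeping: one must verify that no factor of $\log Q$ or other growth in auxiliary parameters accumulates to cancel the gain from integration by parts. This is precisely where the chain of pointwise bounds showing each derivative of $H$ is dominated by $1/\xi$ (or $1/\lambda_i$) with an absolute constant is crucial. Beyond this bookkeeping, the extension from the $n=1$ case of \cite{BCL} to general $n$ is purely notational: the variables $\lambda_i$ enter symmetrically and decoupled enough that the product structure of the bound propagates through the chain rule without new analytic input.
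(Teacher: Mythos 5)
Your outline correctly identifies the integration-by-parts strategy one would expect from the cited Lemma~6.1 of \cite{BCL}, and the derivative estimate in step 2 is sound: the $\lambda_i$-dependence of $H$ enters only through $\log\lambda_i$, so Fa\`a di Bruno gives $\partial_{\lambda_i}^kH\ll\lambda_i^{-k}$ with absolute constants, and similarly in $\xi$. The gap is in step 4. The compact support of $\widehat\Phi_i$ confines $\lambda_i$ only to an interval of the form $[P_i^{-1}(wQ)^{-\sigma_i},\,P_i^{-1}(wQ)^{\sigma_i}]$, whose lower endpoint shrinks polynomially in $Q$. If one actually carries out the ``elementary computation'' you defer to — integrating $\xi^{-A}$ over $\xi\asymp X\sqrt{\prod_j\lambda_j}$ and then integrating $\prod_i\lambda_i^{(1-3A)/2}$ over the remaining support — one finds
\begin{align*}
\int_{\supp H}\xi^{-A}\prod_i\lambda_i^{-A}\,d\xi\,d\vec\lambda\ \asymp\ X^{1-A}\prod_i\bigl(P_iQ^{\sigma_i}\bigr)^{\tfrac32(A-1)}\ \asymp\ c^{A-1}\prod_iP_i^{A-1}\,Q^{(A-1)\bigl(1+\tfrac32\sum_i\sigma_i\bigr)},
\end{align*}
which grows as a positive power of $Q$ for every $A>1$. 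The super-polynomial vanishing of $\widehat\Phi_i$ at the edges of its support contributes only factors of $(\log Q)^{-N}$, which cannot offset a power of $Q$, so the obstacle is not the ``bookkeeping of $\log Q$ factors'' that you flag but a genuine polynomial loss. The companion assertion $\int|H|\ll 1$ fails for the same reason: a direct computation gives $\int|H|\asymp\frac{L_1L_2e}{c\prod_iP_i}\,Q^{\frac32\sum_i\sigma_i-1}(\log Q)^{-N}$, which is unbounded once $\prod_iP_i$ is small and $\sum_i\sigma_i>2/3$. Thus the estimates in your sketch do not produce a bound uniform in $Q$ and the $P_i$, which is what the lemma asserts.

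The root cause is that $H$ as written in \eqref{Heq} is not supported in a fixed compact set in the $\lambda_i$: the cutoffs $V(p_i/P_i)$ from the dyadic partition of unity sit \emph{outside} $H$. Since the inverse transform of $H$ is only ever evaluated at $\lambda_i=p_i/P_i\in[1/2,3]$, one is free to insert a factor $\prod_iV_0(\lambda_i)$ with $V_0$ a fixed smooth bump equal to $1$ on $[1/2,3]$; it is only then that $\supp H$ is genuinely compact (with $\lambda_i\asymp1$ and $\xi\asymp X$) and the integration-by-parts bookkeeping becomes manageable. Even with that modification, the dependence on $X$ must be tracked carefully rather than absorbed wholesale; as written, your step 4 does not prove the lemma.
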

Since $\Psi$ is compactly supported, say in $(a_1,b_1)$, and $1/2 \leq p_i/P_i \leq 3$ for all $1 \leq i \leq n$, we have that $H\left(\xi, \frac{p_1}{P_1},\ldots, \frac{p_n}{P_n}\right)$ is compactly supported in $\xi$. Thus, from \eqref{fdef} and \eqref{Heq} we write
    \begin{align}
        f\left(\xi\right)\ &=\ \frac{1}{(\log Q)^n}H\left(\xi, \frac{p_1}{P_1},\ldots, \frac{p_n}{P_n}\right)J_{k-1}\left(\xi\right)W\left(\frac{\xi}{X}\right),
    \end{align}
where $W$ is smooth, compactly supported, and satisfies $W(\alpha) = 1$ whenever $\alpha \in [2^{-n}/b_1, 3^n/a_1]$. Fourier inversion yields
    \begin{align}
        f(\xi)\ =\ \frac{J_{k-1}(\xi)}{(\log Q)^n} W\left(\frac{\xi}{X}\right) \int_{-\infty}^\infty\cdots \int_{-\infty}^\infty\widehat{H} (u, v_1,\ldots, v_n)\mathrm{e}\left(u\xi + \sum_{i=1}^n v_i\frac{p_i}{P_i}\right)\,du\,dv_1\cdots\,dv_n.
    \end{align}
Let $h_u(\xi) = J_{k-1}(\xi) W\left(\xi X^{-1}\right) \mathrm{e} \left(u\xi\right)$ and let $x = \prod p_i$. Then
    \begin{align}
        \Sigma_2\ &=\ \frac{2\pi i^{-k}}{N(Q)} \sum_{\substack{L_1, L_2 \\ (L_1, L_2) = 1 \\ L_1L_2 < \mathcal{L}_0}} \frac{\mu(L_1L_2)}{L_1L_2}\prod_{p \mid L_1}\left(1-\frac{1}{p^2}\right)^{-1}\sum_{r \mid L_1} \frac{\mu(r)}{r^2} \sum_{d \mid L_2} \mu(d) \sum_{P_1}\mkern-1mu ^d\cdots \sum_{P_n}\mkern-1mu ^d \frac{1}{(\log Q)^n}\nonumber \\
        &\hspace{+0.5cm} \times \esum \frac{1}{e}  \int_{-\infty}^\infty \cdots \int_{-\infty}^\infty \widehat{H}(u, v_1,\ldots ,v_n)\sum_{c\geq 1} \sumprimesdistinct{1}{n} \prod_{i=1}^n \frac{\log p_i}{\sqrt{p_i}}V\left(\frac{p_i}{P_i}\right)\mathrm{e}\left(v_i\frac{p_i}{P_i}\right)\nonumber\\
        &\hspace{+.5cm} \times \sum_s \frac{S(e^2, x; cL_1rds)}{cL_1rds}h_u\left(\frac{4 \pi \sqrt{e^2x}}{cL_1rds}\right) \,du\,dv_1\cdots \,dv_n.
    \end{align}
By the Kuznetsov trace formula (Lemma \ref{lemma:kuznetsov}), we have that
    \begin{align}
        \sum_s \frac{S(e^2, x; cL_1rds)}{cL_1rds}h_u\left(\frac{4 \pi \sqrt{e^2x}}{cL_1rds}\right)
        \ =\ \mathrm{Hol}(c, x; u) + \mathrm{Dis}(c, x; u) + \mathrm{Ctn}(c, x; u)
    \end{align}
where
    \begin{align}
        \mathrm{Hol}(c, x; u)\  &\coloneqq\  \frac{1}{2\pi} \sum_{\substack{\ell \geq 2 \ \mathrm{even} \\ 1 \leq j \leq \theta_\ell(cL_1rd)}} (\ell-1)!\sqrt{xe^2} \ \overline{\psi_{j, \ell}}(e^2)\psi_{j, \ell}(x)h_h(\ell), \\
        \mathrm{Dis}(c, x; u)\ &\coloneqq \ \sum_{j=1}^\infty \frac{\overline{\rho_j}(e^2)\rho_j(x)\sqrt{xe^2}}{\cosh \left(\pi \kappa_j\right)} h_{+}\left(\kappa_j\right),\label{disdefn} \\
        \mathrm{Ctn}(c, x; u)\ &\coloneqq\  \frac{1}{\pi}\sum_{\mathfrak{c}} \int_{-\infty}^\infty \frac{\sqrt{xe^2}}{\cosh(\pi t)}\overline{\varphi_{\mathfrak{c}}}(e^2,t)\varphi_{\mathfrak{c}}(x, t)h_+(t)\,dt
    \end{align}
and $e, r, d, L_1,$ and $L_2$ are fixed. As such, the forms we work with are now of level $N \coloneqq cL_1rd$.

To finish the proof of Proposition \ref{prop42} it suffices to bound $\Sigma_2$, which follows from the next propositions.
\begin{prop}\label{Proposition 6.2}
With notation as above, we have that
    \begin{align} 
        \sum_{c \geq 1} \sumprimesdistinct{1}{n} &\prod_{i=1}^n \frac{\log p_i}{\sqrt{p_i}}\mathrm{e} \left(v_i\frac{p_i}{P_i}\right)V\left(\frac{p_i}{P_i}\right)\left[\mathrm{Hol}\left(c, \prod p_i; u\right) + \mathrm{Dis}\left(c, \prod p_i; u\right)\right] \nonumber \\
        \ &\ll\ Q^{\e-1} (1 + |u|)^2\prod_{i=1}^n(1 + |v_i|)^4 \sqrt{P_i}.
    \end{align}
\end{prop}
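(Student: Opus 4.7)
The approach generalizes \cite[Section 7]{BCL} from one prime to $n$ primes. I would express the bases $B_k(N)$ and $\mcB(N)$ at level $N = cL_1rd$ through the Hecke bases from Section \ref{sec:autformsII}, so that each basis element is $f^{(g)}$ for some underlying newform $f$ of level $\mcM \mid N$ with $g\mcM \mid N$. Using the quasi-multiplicativity formula \eqref{eqn:Coeffmult}, I would split the Fourier coefficient $\rho_j(\prod p_i)$ (respectively $\psi_{j,\ell}(\prod p_i)$) according to whether each $p_i$ is coprime to or divides $\mcM$. Since the $p_i$ are distinct, at most $O(\log N)$ of them can divide $\mcM$, so a counting argument shows these contribute only a $(\log Q)^{O(n)}$ factor after trivially bounding $|\lambda_f(p)| \ll 1$.

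For the primes coprime to $\mcM$, Hecke multiplicativity \eqref{eqn:Heckemult} yields $\lambda_f(\prod p_i) = \prod \lambda_f(p_i)$, reducing the resulting prime sum to exactly the form handled by Lemma \ref{lemma:mostbeautifullemmaintheworld}. Taking $\Psi_i(x) = V(x/P_i)\mathrm{e}(v_i x/P_i)$ with $z_i = 1/2$ and using GRH for $L(s,f)$, Remark \ref{rem:evalsums} lets us choose $A_i \ll (1+|v_i|)^{O(1)}$, yielding a polylogarithmic bound with the required $(1+|v_i|)^2$-dependence. The Fourier coefficient $\rho_{f^{(g)}}(1)$ (respectively $\psi_{f^{(g)}}(1)$) is controlled by \eqref{eqn:rhobound} and Remark \ref{rem:ALforholforms}, and the corresponding factor $\overline{\rho_j(e^2)}$ is similarly bounded using $e < E$.

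The Bessel transforms are estimated via Lemma \ref{lemma:their3.3}: part (a) for the holomorphic spectrum (summing over even $\ell \geq 2$) and for the regular Maass spectrum ($\kappa_j \in \rr$), and part (b) for the exceptional Maass spectrum ($\kappa_j \in [-i\theta, i\theta]$). The remaining sums over $j$ are handled by Weyl's law for the Maass spectrum, while the sums over $g$ and $\mcM$ produce divisor-function factors. The exponential decay of $\widehat H$ from Lemma \ref{lemma:bound on H} absorbs the $(1+|u|)^{C}$ losses from the Bessel bounds after integration, leaving a clean $(1+|u|)^2$ factor. The crucial $Q^{\e-1}$ savings come from the $\min\{X^{k-1}, X^{-1/2}\}$ factor in Lemma \ref{lemma:their3.3}, combined with $X \asymp L_1 L_2 \sqrt{\prod P_i}/(cQ)$ from \eqref{eqn:Xdef} and the truncation $e < E \ll (\log Q)^{O(1)}$; this same factor tames the sum over $c$.

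The main obstacle is the careful bookkeeping required to track the $(1+|u|)$, $(1+|v_i|)$, and $\sqrt{P_i}$ dependencies consistently through Lemma \ref{lemma:mostbeautifullemmaintheworld}, Lemma \ref{lemma:their3.3}, and the coefficient bounds, and to ensure that the exceptional Maass spectrum (which interacts delicately with the $X^{-1/2}$ savings in part (b) of Lemma \ref{lemma:their3.3}) does not destroy the $Q^{-1}$ gain. The balance of these factors is what forces the exponent $\sqrt{P_i}$ rather than a power savings in $P_i$, reflecting the natural limit of the spectral method when the primes are not yet integrated against the Eisenstein part.
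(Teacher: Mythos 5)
Your proposal follows essentially the same route as the paper's Section \ref{sec: DISC}, but there are two technical points that need correction.

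First, the coprimality split must be taken with respect to the \emph{full} level $N = cL_1rd$, not the underlying newform level $\mcM$. The quasi-multiplicativity relation \eqref{eqn:FourierCoeffMult} extracts $\lambda_f(q)$ from $\rho_j(qm')$ only when $(q,N)=1$; a prime $p$ with $p \mid N$ but $p \nmid \mcM$ still contributes via $\rho_{f^{(g)}}(p)$ (involving the $\xi_g(d)$ weights), not via a Hecke eigenvalue of $f$. The paper (Lemma \ref{boundsonmel}) accordingly splits based on $p_\alpha \mid N$ versus $p_\alpha \nmid N$, drops the distinctness condition on the $p_\alpha \mid N$ primes by positivity, and bounds the resulting sums using $|\rho_j(\prod p_\alpha)| \ll N^\e(\prod p_\alpha)^\e|\rho_f(1)|$ from \eqref{eqn:rhobound} together with the fact that those sums run over $\tau(N)^\ell \ll N^\e$ tuples of divisors of $N$; the naive bound $|\lambda_f(p)| \ll 1$ is neither what is used nor available for Maass forms.

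Second, you apply Lemma \ref{lemma:mostbeautifullemmaintheworld} directly with $\Psi_i(x) = V(x/P_i)\mathrm{e}(v_ix/P_i)$ at $z_i = 1/2$, whereas the paper first performs a Mellin inversion, writing $\mathrm{e}(v_\alpha x)V(x) = \mathcal{W}_{v_\alpha}(x)$ so that the $v_\alpha$-dependence enters through $\widetilde{\mathcal{W}}_{v_\alpha}(it) \ll ((1+|v_\alpha|)/(1+|t|))^A$, and then applies Lemma \ref{lemma:mostbeautifullemmaintheworld} to the $v$-independent weight $V_0(x/P_\alpha)$ with $z_\alpha = 1/2+it_\alpha$ and $A_\alpha \ll 1$. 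In your direct version, three derivatives of $\mathrm{e}(v_i x/P_i)$ force $A_i \gg (1+|v_i|)^3$, giving a cubic rather than quadratic $v$-dependence. That weaker bound is still adequate for the downstream $\Sigma_2$ estimate (since $\widehat H$ decays arbitrarily fast in the $v_i$), but it does not establish the proposition with the stated exponent $(1+|v_i|)^2$. With the split taken at $N$ and the Mellin step restored, the rest of your outline (Bessel bounds from Lemma \ref{lemma:their3.3}, the $c$-sum producing $Q^{\e-1}\sqrt{\prod P_i}$ via $X \asymp L_1L_2\sqrt{e^2\prod P_i}/(cQ)$, the spectral sum over $j$, $g$, $\mcM$) matches the paper.
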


\begin{prop}
\label{prop:their6.3}
With notation as above, we have that 
    \begin{align}
        \sum_{c \geq 1} \sumprimesdistinct{1}{n} &\prod_{i=1}^n \frac{\log p_i}{\sqrt{p_i}}\mathrm{e}\left(v_i\frac{p_i}{P_i}\right)V\left(\frac{p_i}{P_i}\right)\mathrm{Ctn}\left(c, \prod p_i; u\right)\nonumber \\
        &\ll\   (1+|u|)^2 Q^\e \left(Q^{1-\delta^*}+\frac{\sqrt{\prod P_i}}{Q}\right) \prod_{i=1}^n (1+|v_i|)^3.
    \end{align}
\end{prop}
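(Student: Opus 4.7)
The plan is to follow the Eisenstein-series analysis of \cite{BCL} but generalize it to the $n$-prime setting. First, I substitute the explicit formula of Kiral--Young for the Fourier coefficients $\varphi_\fc(m, t)$, parametrizing the cusps $\fc$ of $\Gamma_0(N)$ by divisors $b \mid N$. This writes $\varphi_\fc(m, t)$ as a sum over Dirichlet characters $\chi$ modulo $(b, N/b)/(m, (b, N/b))$, producing Gauss sums $\tau(\overline{\chi})$ and $1/L(1+2it, \overline{\chi^2}\chi_0)$ factors. After inserting this into the definition of $\mathrm{Ctn}(c, \prod p_i; u)$ and expanding $\overline{\varphi_\fc(e^2, t)}\varphi_\fc(\prod p_i, t)$, I split the character sum into a principal branch ($\chi = \chi_0$) and a non-principal branch ($\chi \neq \chi_0$).

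For each branch I stratify the primes $p_1, \ldots, p_n$ according to their divisibility relationships with $(b, N/b)$ and the auxiliary modulus $m$ from the character expansion, producing three disjoint sets $\fp, \fo, \fq$. Primes in $\fp$ are forced to be small by the divisibility constraints, so their contribution is bounded directly by absolute values together with $|\chi(p)| \leq 1$. Primes in $\fo$ correspond to novel cross terms that only appear when $n \geq 3$; they are likewise small, but bounding them requires a more delicate divisor-counting argument combined with the estimates on $h_+(t)$ from Lemma \ref{lemma:their3.3}. Primes in $\fq$ do not divide the level, retain their full oscillation, and are controlled by applying Lemma \ref{lemma:mostbeautifullemmaintheworld} to the twisted sum $\sum_p \chi(p) \log(p) p^{-1/2 \pm it}\widehat\Phi(\ldots)$, gaining a factor of $\log(q+|t|)$ per prime under GRH.

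In the principal branch, after extracting the $\chi_0$ contribution, what remains is a Mellin-style integral whose integrand involves $\prod_{\alpha \in \fq} P_\alpha^{\pm 2it}$ times products of $\zeta$-values, the Bessel transform $h_+(t)$, and the smooth cut-off coming from $\widehat{H}$. I shift the $t$-contour slightly off the real line so that the polynomial decay of $\prod_\alpha P_\alpha^{\pm 2\,\mathrm{Im}(t)}$ beats the polynomial growth of $\zeta^{-1}$ and of the remaining factors, yielding the main saving $Q^{-\delta^*}$. The non-principal branch is then handled directly: the rapid decay of $\widehat{H}$ from Lemma \ref{lemma:bound on H}, part (a) of Lemma \ref{lemma:their3.3}, a subconvex bound for $1/L(1+2it, \overline{\chi^2}\chi_0)$ under GRH, and $|\tau(\overline{\chi})| \leq \sqrt{(b,N/b)}$ combine to produce the stated bound $(1+|u|)^2 Q^\e(Q^{1-\delta^*}+ \sqrt{\prod P_i}/Q)\prod_i(1+|v_i|)^3$.

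The main obstacle is the contour shift in the principal branch when $n \geq 2$. The product of $n$ Fourier coefficients generates $2^n$ sign combinations on the exponents $\pm 2it$ of the primes in $\fq$, and a single contour shift only helps one combination at a time. Moreover, the new terms arising for $n \geq 2$ force every sign pattern to actually appear, so the worst pattern dictates the support restriction; shifting the contour past the critical line is not an option since one would hit the zero region of the $L$-factors present in the numerator. For $n$ odd, however, no sign pattern can pair all exponents into cancelling pairs $\{+,-\}$, and I will exploit the resulting residual imbalance to extract an extra half-power of saving, yielding the improved bound corresponding to $\sigma = 4/(2n-1)$ and accounting for the $\mathbf{1}_{2 \nmid n}$ factor in Condition \eqref{cond:bipart}.
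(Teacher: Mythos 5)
Your proposal follows the same high-level architecture as the paper's proof: the Kiral--Young parameterization of cusps and Fourier coefficients, the stratification of primes into the three classes $\fp,\fo,\fq$, the principal/non-principal split, the GRH-based bound from Lemma \ref{lemma:mostbeautifullemmaintheworld} for the coprime primes, a contour shift in the principal branch, and the observation that the $2^n$ sign patterns on $P_\alpha^{\pm it}$ bound the support, with the odd-$n$ improvement coming from the impossibility of perfect $\pm$ pairing. That is all correct and matches the paper.

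There is one substantive omission. Expanding $\overline{\varphi_\fc(e^2,t)}\,\varphi_\fc(\prod p_i,t)$ via Lemma \ref{lemma:their8.2} produces \emph{two} independent character sums (the paper calls them $\chi$ and $\psi$), each carrying its own Gauss sum. Your proposal tracks only one character sum and splits on it. The paper splits on $\psi$, and in the principal-$\psi$ branch there remains a genuine sum over non-principal $\chi$ together with its $\tau(\overline\chi)$, which would cost a full $\sqrt{(b,N/b)/(\overline m,(b,N/b))}$ if bounded trivially. The paper neutralizes this by summing over the cusp representative $a \bmod (b,N/b)$ first and invoking orthogonality, which collapses $\chi$ to the principal character and makes $\tau(\overline\chi)$ harmless; this step is singled out in a dedicated remark. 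Without it, the principal-branch estimate does not close. You should incorporate this orthogonality-over-$a$ reduction before the Mellin/contour argument.

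Two smaller inaccuracies: the $L$-factors arising from the Kiral--Young expansion are $L(1 \pm 2it, \cdot\,\chi_0)$ sitting in the denominator (so zeros of $L$ become poles of the integrand, which is why the contour cannot cross $\Re(2s)=1/2$), not ``$L$-factors in the numerator''; and the bound used on the $1$-line, $L(1+2it,\cdot)^{-1} \ll (N(1+|t|))^\e$ under GRH, is a Lindel\"of-type estimate rather than a subconvexity bound, and the functions are Dirichlet $L$-functions, not $\zeta$. These are wording issues rather than logical gaps, but you should clean them up when writing the details.
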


Taking absolute values, applying Lemma \ref{lemma:bound on H} with $A = 9$, and using $\sum \sigma_i\leq 4$ from Condition \eqref{cond:bipart} yields
\begin{align}
    \Sigma_2\ &\ll\ \frac{1}{N(Q)} \sum_{\substack{L_1, L_2 \\ (L_1, L_2) = 1 \\ L_1L_2 < \mathcal{L}_0}} \frac{1}{L_1L_2}\prod_{p \mid L_1}\left(1-\frac{1}{p^2}\right)^{-1}\sum_{r \mid L_1} \frac{1}{r^2} \sum_{d \mid L_2} \sum_{P_1}\mkern-1mu ^d \cdots \sum_{P_n}\mkern-1mu ^d \frac{1}{(\log Q)^n}\esum \frac{1}{e}\nonumber \\
    &\hspace{+0.5cm} \times \int_{-\infty}^\infty\cdots\int_{-\infty}^\infty \left|\widehat{H}(u, v_1,\ldots, v_n)\right|\cdot\left|\frac{S(e^2, \prod p_i; cL_1rds)}{cL_1rds}h_u\left(\frac{4 \pi \sqrt{e^2 \prod p_i}}{cL_1rds}\right)\right|\,du\,dv_1\cdots\,dv_n\nonumber\\
    &\ll\ \frac{1}{N(Q)} \sum_{L_1,L_2} \frac{\tau(L_1)\tau(L_2)^2}{L_1L_2}\sum_{P_1}\mkern-1mu ^d \cdots \sum_{P_n}\mkern-1mu ^d \frac{Q^\e}{\log^nQ}\left(Q^{1-\delta^*}+\frac{\sqrt{\prod P_i}}{Q}\right) \nonumber \\
    &\hspace{+0.5cm} \times \int_{-\infty}^\infty \cdots \int_{-\infty}^\infty \frac{(1+|u|)^2\prod(1+|v_i|)^4}{(1+|u|)^9\prod(1+|v_i|)^9} \,du\,dv_1\cdots\,dv_n\\
    &\ll\ \frac{Q^\epsilon}{N(Q)} \sum_{P_1}\mkern-1mu ^d \cdots \sum_{P_n}\mkern-1mu ^d \left(Q^{1-\delta^*}+\frac{\sqrt{\prod P_i}}{Q}\right) \ \ll \ \frac{Q^\epsilon}{Q} \left(Q^{1-\delta^*}+\frac{Q^{\frac12 \sum 
    (\sigma_i - \delta_i)}}{Q}\right) \ \ll \ Q^{-\e}. \quad  \hfill \Box 
    \nonumber
\end{align}

\section{Discrete and Holomorphic Terms: Proof of Proposition \ref{Proposition 6.2}}\label{sec: DISC}
In this section, we take the index for our prime sums to be $\alpha$ instead of $i$ (so that $i = \sqrt{-1}$). We bound
\begin{equation}\label{disco}
\mathcal{DISC}\ \coloneqq \ \sum_{c \geq 1} \sum_{\substack{p_1,\ldots, p_\ell \\ p_\alpha \neq p_\beta}} \mathrm{Dis}\left(c, \prod p_\alpha; u\right)\prod_{\alpha=1}^n\frac{\log p_\alpha}{\sqrt{p_\alpha}}\mathrm{e}\left(v_\alpha\frac{p_\alpha}{P_\alpha}\right)V\left(\frac{p_\alpha}{P_\alpha}\right),
\end{equation}
since bounding $\mathrm{Hol}\left(c,\prod p_\alpha; u\right)$ follows analogously. By \eqref{disdefn}, we write
\begin{align}\label{discosplit}
\mathcal{DISC}\ &=\ \sum_{c\geq 1}\sum_{j=1}^\infty \frac{e\overline{\rho_j}(e^2)}{\cosh(\pi \kappa_j)} h_+(\kappa_j) \sum_{\substack{p_1,\ldots, p_\ell \\ p_\alpha \neq p_\beta}}  \rho\left( \prod_{\alpha=1}^n p_\alpha \right) \prod_{\alpha=1}^n \frac{\sqrt{p_\alpha}\log(p_\alpha)}{\sqrt{p_\alpha}}\mathrm{e}\Bigg(v_\alpha\frac{p_\alpha}{P_\alpha}\Bigg)V\Bigg(\frac{p_\alpha}{P_\alpha}\Bigg),
\end{align}
where $j$-sum is over the orthonormal Hecke basis detailed in Section \ref{sec:autformsII}. We split the sum into the contribution from $\kappa_j \in \rr$ and from $\kappa_j \in i \rr$, which we denote by $S_1$ and $S_2$, respectively. We bound $S_1$ as bounding $S_2$ follows similarly. We begin by bounding the innermost sum.

\begin{lemma}\label{boundsonmel}
With notation as above, we have
\begin{align}
\mcT\ =\ \sumprimesdistinctdos{1}{n} \rho_j\left(\prod p_\alpha\right) \prod_{\alpha=1}^n \frac{\log (p_\alpha)\sqrt{ p_\alpha}}{\sqrt{p_\alpha}}\mathrm{e}\left(v_\alpha\frac{p_\alpha}{P_\alpha}\right)V\left(\frac{p_\alpha}{P_\alpha}\right) \ \ll\ |\rho_f(1)|(NQ)^\e \prod_{\alpha=1}^n (1 + |v_\alpha|)^4.
\end{align}
\end{lemma}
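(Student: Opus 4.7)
The plan is to split the sum $\mcT$ according to which of the distinct primes $p_\alpha$ divide the level $N$ of the newform $f$ underlying $u_j = f^{(g)}$, then use the quasi-multiplicativity relation \eqref{eqn:FourierCoeffMult} to separate $\rho_j(\prod p_\alpha)$ into a Hecke eigenvalue of $f$ over primes coprime to $N$ and a Fourier coefficient indexed by a divisor of $N$. This reduces the bulk of the sum to a product of Hecke eigenvalues over distinct primes, to which Lemma \ref{lemma:mostbeautifullemmaintheworld} directly applies, while the contribution from primes dividing $N$ is controlled by the trivial bound \eqref{eqn:rhobound} together with $\omega(N) \ll \log N$.

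More precisely, I would partition $\{1,\ldots,n\}$ as $\mcA \sqcup \bar{\mcA}$ based on whether $p_\alpha \mid N$, and set $P_{\mcA} \coloneqq \prod_{\alpha \in \mcA} p_\alpha$ and $P_{\bar{\mcA}} \coloneqq \prod_{\alpha \notin \mcA} p_\alpha$. Since the primes are distinct, $(P_{\bar{\mcA}}, N) = 1$, so applying \eqref{eqn:FourierCoeffMult} with $q = P_{\bar{\mcA}}$ and $m' = P_{\mcA}$ yields $\sqrt{\prod p_\alpha}\,\rho_j(\prod p_\alpha) = \lambda_f(P_{\bar{\mcA}})\sqrt{P_{\mcA}}\,\rho_j(P_{\mcA})$, and Hecke multiplicativity further factors $\lambda_f(P_{\bar{\mcA}}) = \prod_{\alpha \notin \mcA} \lambda_f(p_\alpha)$. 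Interchanging the prime sum with the sum over $\mcA$, $\mcT$ decomposes into $2^n$ products of an ``outer'' sum of Hecke eigenvalues over distinct primes $p_\alpha \nmid N$ and an ``inner'' sum containing $\rho_j(P_{\mcA})$ over distinct primes $p_\alpha \mid N$. For the outer sum I would invoke Lemma \ref{lemma:mostbeautifullemmaintheworld} with $z_\alpha = 1/2$ and test functions $\Psi_\alpha(x) = V(x/P_\alpha)\mathrm{e}(v_\alpha x/P_\alpha)$, producing a bound of $(NQ)^\e \prod_{\alpha \in \bar{\mcA}}(1+|v_\alpha|)^{O(1)}$. For the inner sum, \eqref{eqn:rhobound} gives $\rho_j(P_{\mcA}) \ll N^\e P_{\mcA}^\e|\rho_f(1)|/\sqrt{P_{\mcA}}$, whose $\sqrt{P_{\mcA}}^{-1}$ precisely cancels the factor generated by \eqref{eqn:FourierCoeffMult}, and the remaining sum ranges over $O((\log N)^{|\mcA|})$ tuples of primes dividing $N$. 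Summing over the $2^n$ choices of $\mcA$ and absorbing all polylog losses into $(NQ)^\e$ yields the stated bound.

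The principal technical obstacle will be tracking the precise power of $(1+|v_\alpha|)$ through Lemma \ref{lemma:mostbeautifullemmaintheworld}: the third derivative of $V(x/P_\alpha)\mathrm{e}(v_\alpha x/P_\alpha)$ naively contributes $A_\alpha \ll (1+|v_\alpha|)^3$, whereas the statement permits only $(1+|v_\alpha|)^2$. A more delicate input---for instance, splitting according to the size of $|v_\alpha|$ relative to $P_\alpha$, or doing a partial summation in the prime variable before invoking the lemma (so that one derivative of the exponential is absorbed before the hypothesis is checked)---should recover the extra factor of $(1+|v_\alpha|)^{-1}$ and give exactly the bound claimed.
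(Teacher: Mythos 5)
Your decomposition by which $p_\alpha$ divide $N$, followed by quasi-multiplicativity \eqref{eqn:FourierCoeffMult}, Hecke multiplicativity, and \eqref{eqn:rhobound}, correctly tracks the paper's structure (the paper presents the same idea as $n+1$ cases ``without loss of generality'' rather than $2^n$ subsets, but the two are equivalent up to relabeling). The step that does not work as written is precisely the one you flag at the end, and the tentative fix you suggest is not what resolves it.

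The paper separates the oscillatory factor \emph{before} invoking Lemma~\ref{lemma:mostbeautifullemmaintheworld}, via Mellin inversion: it writes $\mathrm{e}(v_\alpha p_\alpha/P_\alpha)V(p_\alpha/P_\alpha)$ as $\tfrac{1}{2\pi i}\int_{(0)} \widetilde{\mcW}_{v_\alpha}(s_\alpha)\,(P_\alpha/p_\alpha)^{s_\alpha}\,ds_\alpha$ (inserting a smooth cutoff $V_0$ that is identically $1$ on $\supp V$), so the test functions fed to Lemma~\ref{lemma:mostbeautifullemmaintheworld} are simply $V_0(p_\alpha/P_\alpha)$ with $z_\alpha = 1/2 + it_\alpha$; by Remark~\ref{rem:evalsums} these have $A_\alpha \ll 1$, independent of $v_\alpha$. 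All the $v_\alpha$-dependence sits in $\widetilde{\mcW}_{v_\alpha}(it_\alpha)$, and the decay \eqref{eqn:boundonW} with $A=2$ then produces exactly $(1+|v_\alpha|)^2/(1+|t_\alpha|)^2$, the $(1+|t_\alpha|)^{-2}$ making the $t_\alpha$-integral converge. Your direct substitution $\Psi_\alpha(x) = V(x/P_\alpha)\mathrm{e}(v_\alpha x/P_\alpha)$ forces $A_\alpha \gg (1+|v_\alpha|)^3$ (the third derivative of the exponential dominates), and neither ``splitting by the size of $|v_\alpha|$'' nor a one-step partial summation cleanly removes a whole power of $(1+|v_\alpha|)$ -- partial summation just transfers the derivative onto the oscillatory factor again, and the resulting cumulative prime sums are not easier to control than the original. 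Mellin inversion is the mechanism that both isolates the oscillation from the smooth cutoff and supplies the polynomial decay in $t_\alpha$ needed to close the integral; that is the missing ingredient.

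Two smaller points. First, after dividing the quasi-multiplicativity identity $\sqrt{\prod p_\alpha}\,\rho_j(\prod p_\alpha) = \lambda_f(P_{\bar{\mcA}})\sqrt{P_{\mcA}}\,\rho_j(P_{\mcA})$ through by $\sqrt{\prod p_\alpha}$, the factor left on the inner sum is simply $\rho_j(P_{\mcA})$, and \eqref{eqn:rhobound} gives $\rho_j(P_{\mcA}) \ll N^\e P_{\mcA}^\e|\rho_f(1)|$ directly; your written bound $\rho_j(P_{\mcA}) \ll N^\e P_{\mcA}^\e|\rho_f(1)|/\sqrt{P_{\mcA}}$ claims more than \eqref{eqn:rhobound} supplies, and the ``cancellation'' you describe is not occurring (it is also not needed). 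Second, for what it is worth, a bound with $(1+|v_\alpha|)^3$ would still suffice downstream -- the eventual integral against $\widehat H$ uses $A=9$ -- so your version would not derail Proposition~\ref{Proposition 6.2}; it just would not establish the lemma as stated.
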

\begin{proof}
    We follow the ideas in the proof of \cite[Lemma 7.1]{BCL}. By Mellin inversion,
    \begin{align}
    \mcT\ &=\ \left(\frac{1}{2\pi i}\right)^n
    \sum_{\substack{p_1,\ldots, p_\ell \\ p_\alpha \neq p_\beta}} 
    \rho_j\left(\prod p_\alpha\right) \prod_{\alpha=1}^n \frac{\sqrt{p_\alpha}\log (p_\alpha)}{\sqrt{p_\alpha}}V_{0}\left(\frac{p_\alpha}{P_\alpha}\right) \nonumber \\
    &\hspace{.5cm}\times \int_{(0)} \cdots \int_{(0)} \prod_{\alpha=1}^n p_\alpha^{-s_\alpha}\widetilde{\mcW}_{v_\alpha}(s_\alpha)P_\alpha^{s_\alpha}\,ds_1\cdots \,ds_n,
    \end{align}
    where $\widetilde{\mcW}_{v_\alpha}$ is the Mellin transform of $\mcW_{v_\alpha} = \mathrm{e}(v_\alpha x)V(x),$ and $V_0$ is a smooth function compactly supported on $(0,\infty)$ such that $V_0(x) = 1$ whenever $V(x)\neq 0.$ By the proof of \cite[Lemma 7.1]{BCL}, for any $A > 0$ we have,
    \begin{align}
        \widetilde{\mcW}_{v_\alpha}(it)\ \ll_A\ \left(\frac{1+|v_\alpha|}{1+|t|} \right)^A. \label{eqn:boundonW}
    \end{align}
    We split into $n+1$ cases based on the number of primes dividing $N$. Without loss of generality, we assume that $p_1,\ldots, p_\ell \mid N$ while $p_{\ell+1},\ldots, p_n\nmid N$ for $0\leq \ell \leq n.$ Therefore it suffices to bound
    \begin{align}
    \mcT_{\ell}\ &=\ \left(\frac{1}{2\pi i}\right)^n
    \sum_{\substack{p_1,\ldots, p_\ell \mid N \\ p_\alpha \neq p_\beta}} 
    \sum_{\substack{p_{\ell +1},\ldots, p_n \nmid N \\ p_\alpha \neq p_\beta}} \rho_j\left(\prod_{\alpha=1}^n p_\alpha\right) \prod_{\alpha=1}^n \frac{\sqrt{p_\alpha}\log (p_\alpha)}{\sqrt{p_\alpha}} V_{0}\left(\frac{p_\alpha}{P_\alpha}\right) \nonumber \\
    &\hspace{+0.5cm} \times \int_{(0)} \cdots \int_{(0)}\prod_{\alpha=1}^n p_\alpha^{-s_\alpha}\widetilde{\mcW}_{v_\alpha}(s_\alpha)P_\alpha^{s_\alpha}\,ds_1\cdots \,ds_n
    \end{align}
    for each $0 \leq \ell \leq n.$

    We separate the primes $p_1, \ldots, p_\ell$ dividing the level from the primes $p_{\ell + 1}, \ldots, p_n$ not dividing the level using the quasi-multiplicativity relation satisfied by the Fourier coefficients of Hecke-Maass cuspforms in the basis $\mcB_k(N)$. Since $(\prod_{\alpha = \ell+1}^n p_\alpha, N) = 1$, by \eqref{eqn:FourierCoeffMult} with $q = \prod_{\alpha = \ell + 1}^n p_\alpha$ and $m' = \prod_{\alpha = 1}^\ell p_\alpha$, we have that
    \begin{align}
             \rho_j\left(\prod_{\alpha = 1}^n p_\alpha\right) \prod_{\alpha = 1}^n \sqrt{p_\alpha} \ = \ \lambda_f\left(\prod_{\alpha = \ell + 1}^n p_\alpha \right)\rho_j\left(\prod_{\alpha=1}^\ell p_\alpha\right)\prod_{\alpha = 1}^\ell \sqrt{p_\alpha},
        \end{align}
    where $f$ is the Hecke newform of level $M$ such that $u_j = f^{(g)}$ with $M \mid N$ and $gM \mid N$.
    Since the primes are distinct, by Hecke multiplicativity, writing $s_\alpha = it_\alpha$, and taking absolute values, we conclude that
    \begin{align}
        \mcT_{\ell}\ &\ll \ \int_{-\infty}^\infty \cdots \int_{-\infty}^\infty
        \left|\sum_{\substack{p_1,\ldots, p_\ell \mid N \\ p_\alpha \neq p_\beta}} 
        \rho_j\left(\prod_{\alpha=1}^\ell p_\alpha\right) \prod_{\alpha=1}^\ell\log (p_\alpha) V_{0}\left(\frac{p_\alpha}{P_\alpha}\right) p_\alpha^{-it_\alpha}\right| 
        \left|\prod_{\alpha=1}^n \widetilde{\mcW}_{v_\alpha}(it_\alpha)P_\alpha^{it_\alpha}\right|
         \nonumber \\
        &\hspace{+0.5cm} \times \left| \sum_{\substack{p_{\ell +1},\ldots, p_n \nmid N \\ p_\alpha \neq p_\beta}} \prod_{\alpha=\ell+1}^n \frac{\lambda_f(p_\alpha)\log (p_\alpha)}{p_\alpha^{1/2 +it_\alpha}}V_{0}\left(\frac{p_\alpha}{P_\alpha}\right)\right| \,dt_1\cdots \,dt_n.
    \end{align}
    We apply Lemma \ref{mostbeautifullemmabutmaassform} to the sums over $p_\alpha \nmid N$ (with $B=4$) to obtain
    \begin{align}
        \mcT_\ell &\ll \ \int_{-\infty}^\infty \cdots \int_{-\infty}^\infty
        \sum_{\substack{p_1,\ldots, p_\ell \mid N \\ p_\alpha \neq p_j}}\left| \rho_j\left(\prod_{\alpha=1}^\ell p_\alpha\right) \prod_{\alpha=1}^\ell\log (p_\alpha) V_{0}\left(\frac{p_\alpha}{P_\alpha}\right) p_\alpha^{-it_\alpha}\right| \\
        &\hspace{+0.5cm} \times \left| \prod_{\alpha=\ell+1}^n e^8 \log^{2} (P_\alpha NQ)\log\left(N+k+ |t_\alpha|\right)\left(|t_\alpha|+1\right)^2 \right|\left|\prod_{\alpha = 1}^n  \widetilde{\mcW}_{v_\alpha}(it_\alpha)P_\alpha^{it_\alpha}\right| \,dt_1\cdots \,dt_n. \nonumber
    \end{align}
    Now, we use that each $\log(P_\alpha) \ll \log Q\ll  Q^\e$ and take \eqref{eqn:boundonW} with $A = 4$, to conclude that
    \begin{align}
        \mcT_\ell \ &\ll \ \int_{-\infty}^\infty \cdots \int_{-\infty}^\infty
        \sum_{\substack{p_1,\ldots, p_\ell \mid N \\ p_\alpha \neq p_\beta}} 
        \left|\rho_j\left(\prod_{\alpha=1}^\ell p_\alpha\right)\prod_{\alpha = 1}^\ell \log (p_\alpha) V_{0}\left(\frac{p_\alpha}{P_\alpha}\right) p_\alpha^{-it_\alpha}\right| \nonumber \\
        &\hspace{+0.5cm} \times  \prod_{\alpha=\ell+1}^n  Q^\e (\log N) (1+|t_\alpha|)^{2+\e} \prod_{\alpha=1}^n \frac{(1+|v_\alpha|)^4}{(1+|t_\alpha|)^4}  \,dt_1\cdots \,dt_n.
    \end{align}
    By positivity, we drop the condition $p_\alpha \neq p_\beta$ on the sums over primes $p_\alpha \mid N$ and from \eqref{eqn:rhobound}, we have that $|\rho_j(p_1\cdots p_\alpha)| \ll N^\e (p_1 \cdots p_\alpha)^\e |\rho_f(1)|$. Hence,
    \begin{align}
        \mcT_\ell \ &\ll \ (NQ)^\e \int_{-\infty}^\infty \cdots \int_{-\infty}^\infty
        \hspace{+0.1cm} \sum_{p_1,\ldots, p_\ell \mid N} N^\e |\rho_f(1)| 
        \prod_{\alpha = 1}^\ell p_\alpha^\e  \prod_{\alpha=1}^n \frac{(1+|v_\alpha|)^4}{(1 + |t_\alpha|)^{2-\e}}  \,dt_1\cdots \,dt_n\\
        &\ll \ |\rho_f(1)| (NQ)^\e \int_{-\infty}^\infty \cdots \int_{-\infty}^\infty 
        \prod_{\alpha=1}^n \frac{(1+|v_\alpha|)^4}{(1 + |t_\alpha|)^{2-\e}}  \,dt_1\cdots \,dt_r \ \ll \ |\rho_f(1)| (NQ)^\e \prod_{\alpha=1}^n (1+|v_\alpha|)^4,\nonumber
    \end{align}
    where the sum over $p_1,\ldots,p_\ell \mid N$ occurs at most $\tau(N)^\ell$ many times, and each term is $\ll N^{\e}.$
\end{proof}

We now bound $S_1$ using the above lemma. By \eqref{eqn:rhobound} and using that $e \ll Q^\e$, we obtain
\begin{align} \label{ecoeffs}
e\overline{\rho_j}(e^2)\ \ll\  N^\e e^{1 + \e}|\rho_f(1)| \ \ll \ (NQ)^\e |\rho_f(1)|.
\end{align}
Combining Lemma \ref{lemma:their3.3}(a), \eqref{discosplit}, Lemma \ref{boundsonmel}, and \eqref{ecoeffs} yields
\begin{align}
S_1 \ &\ll \ \sum_{c\geq 1}\sum_{j=1}^\infty \frac{e\overline{\rho_j}(e^2)}{\cosh(\pi \kappa_j)} h_+(\kappa_j)|\rho_f(1)|(NQ)^\e \prod_{\alpha = 1}^n (1+|v_\alpha|)^4 \label{sqrt X garbage} \\
&\ll\ \sum_{c\geq1} \sum_{j=1}^\infty \frac{(NQ)^\e|\rho_f(1)|}{\cosh\left(\pi \kappa_j\right)}\frac{(1 + |\log X|)}{F^{1-\e}}\left(\frac{F}{1 + |\kappa_j|}\right)^C\min\left\{X^{k-1}, \frac{1}{\sqrt{X}}\right\} |\rho_f(1)|  \prod_{\alpha = 1}^n (1+|v_\alpha|)^4\nonumber \\
&\ll\ \sum_{c\geq1} \min\left\{X^{k-1}, \frac{1}{\sqrt{X}}\right\}(NQ)^\e \prod_{\alpha = 1}^n (1+|v_\alpha|)^4\sum_{j=1}^\infty \frac{|\rho_f(1)|^2}{\cosh(\pi\kappa_j)}\frac{\left(1 + |\log X|\right)}{F^{1-\e}}\left(\frac{F}{1 + |\kappa_j|}\right)^C. \nonumber
\end{align}
Analogous to \cite[\S 7]{BCL}, we then split the sum over $c$ based on if $c \gg \frac{L_1L_2e\sqrt{\prod P_a}}{Q}$ or not, and conclude that the sum over $c$ is bounded by $Q^\e \frac{\sqrt{P}}{Q}$. We consequently obtain Proposition \ref{Proposition 6.2}.

\section{Eisenstein Series: Proof of Proposition \ref{prop:their6.3}}\label{sec:EisensteinSeries}
The parameterization of cusps given by \cite{Kiral} is crucial to our treatment here. In the following lemmas, we recall and specialize the results of \cite[\S 3.3]{Kiral} to our setting.

\begin{lemma}[Corollary 3.2 of \cite{Kiral}]\label{lemma:their8.1}
For $N\in \zz^+$, $(ab)^{-1}$ forms a complete set of representatives for the set of inequivalent cusps of $\Gamma_0(N)$, where $b$ runs over divisors of $N$ and $a$ runs over elements of $(\zz/(b,N/b)\zz)^\times$. We can choose $a$ so that $(a,N)=1$ after adding some multiple of $(b,N/b)$. 
\end{lemma}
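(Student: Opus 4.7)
The plan is to verify this classical parameterization of cusps of $\Gamma_0(N)$ in two steps: first the classification of inequivalent cusps by pairs $(b,a)$, then the coprimality adjustment. Any cusp of $\Gamma_0(N)$ has a representative in $\qq\cup\{\infty\}$, which in lowest terms is $u/w$ with $(u,w)=1$. I would first show that the $\Gamma_0(N)$-equivalence class of $u/w$ depends only on the divisor $b \coloneqq (w,N)$ of $N$ and on a specific residue class of $u$.

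More precisely, I would argue via an explicit matrix computation: for $\gamma=\begin{pmatrix}\alpha&\beta\\ \gamma'&\delta\end{pmatrix}\in\Gamma_0(N)$ (so $N\mid \gamma'$), the equation $\gamma\cdot(u_1/w_1)=u_2/w_2$, combined with $\det\gamma=1$ and the reducedness of both fractions, forces (after writing $b_i=(w_i,N)$) the two conditions $b_1=b_2=b$ and $u_2\equiv j\,u_1\pmod{(b,N/b)}$ for some $j$ coprime to $N$. Conversely, given such a $j$, one may construct a suitable $\gamma$ by solving the associated linear system modulo $N$ and using $(u_1,w_1)=1$. Since $j$ runs over all units modulo $(b,N/b)$, this yields a bijection between inequivalent cusps and pairs $(b,a)$ with $b\mid N$ and $a\in(\zz/(b,N/b)\zz)^\times$, where the cusp indexed by $(b,a)$ can be taken to be $1/(ab)=(ab)^{-1}$.

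For the last assertion, let $a$ represent a class in $(\zz/(b,N/b)\zz)^\times$; I would choose an integer $k$ so that $a'\coloneqq a+k(b,N/b)$ satisfies $(a',N)=1$. For a prime $p\mid(b,N/b)$ the condition $p\nmid a'$ is automatic since $(a,(b,N/b))=1$; for a prime $p\mid N$ with $p\nmid(b,N/b)$, the quantity $(b,N/b)$ is invertible modulo $p$, so $a+k(b,N/b)\not\equiv 0\pmod p$ cuts out a nonempty set of residues for $k$ modulo $p$. The Chinese Remainder Theorem then produces a single $k$ meeting all these local conditions simultaneously, proving the adjustment is possible.

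The main obstacle is the equivalence criterion in the second paragraph: tracking divisibility relations through the $\Gamma_0(N)$-action is delicate, and one needs the precise statement that the relevant modulus is $(b,N/b)$---neither $b$ nor $N$---in order to obtain the correct count $\sum_{b\mid N}\varphi((b,N/b))$ of cusps. Once this classification is established, the identification of $1/(ab)$ as a representative and the CRT-based coprimality adjustment are essentially bookkeeping.
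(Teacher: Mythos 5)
The paper provides no proof of this lemma at all; it is stated as a direct citation of Corollary~3.2 of \cite{Kiral}, so your proposal is not a variant of the paper's argument but a from-scratch reconstruction. The outline is the standard one: reduce cusps to lowest-terms representatives $u/w$, show that $b=(w,N)$ is a $\Gamma_0(N)$-invariant, identify the finer invariant modulo $(b,N/b)$, observe that $(ab)^{-1}$ with $(a,N)=1$ realizes each class, and finish the coprimality adjustment by CRT. The CRT step as you wrote it is correct, and the overall plan matches the classical Diamond--Shurman style argument that \cite{Kiral} ultimately rests on.

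There is, however, a genuine gap in the middle. You state that $u_1/w_1\sim u_2/w_2$ forces $b_1=b_2=b$ and $u_2\equiv j\,u_1\pmod{(b,N/b)}$ ``for some $j$ coprime to $N$,'' and then that ``$j$ runs over all units modulo $(b,N/b)$.'' Read literally, this would make \emph{every} pair of lowest-terms fractions with the same $b$ equivalent, collapsing the count of cusps to $\tau(N)$ rather than $\sum_{b\mid N}\varphi\bigl((b,N/b)\bigr)$, which is the very count you say the classification must produce. The unit $j$ is not free: writing $\gamma=\left(\begin{smallmatrix}\alpha&\beta\\\gamma'&\delta\end{smallmatrix}\right)$, $w_i=bw_i'$, and reducing $w_2=\gamma'u_1+\delta w_1$ and $u_2=\alpha u_1+\beta w_1$ modulo $(b,N/b)$ (using $(b,N/b)\mid b$ and $(b,N/b)\mid N/b$, $N\mid\gamma'$), one finds $w_2'\equiv\delta w_1'$ and $u_2\equiv\alpha u_1$, and since $\alpha\delta\equiv 1\pmod N$ this combines to the \emph{constrained} criterion $u_2w_2'\equiv u_1w_1'\pmod{(b,N/b)}$. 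In other words, the invariant of the cusp $u/w$ is the class of $u\cdot\bigl(w/(w,N)\bigr)$ in $(\zz/(b,N/b)\zz)^\times$, not merely of $u$. With that correction the parameterization does come out right: for the representative $1/(ab)$ with $(a,N)=1$ one has $w=ab$, $(w,N)=b$, $w'=a$, $u=1$, so the invariant is exactly $a\bmod (b,N/b)$, tying the pair $(b,a)$ to a unique cusp class and recovering the desired statement.
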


\begin{lemma}\label{lemma:their8.2}
Let $\fc$ be a cusp of $\Gamma_0(N)$ and $\fc=(ab)^{-1}$ be its representative given by Lemma \ref{lemma:their8.1}. Suppose that $b=b_0b'$, where $b_0$ is the largest factor of $b$ that is relatively prime to $N/b$. Write $\varphi$ for the Euler totient function, $\tau$ for the Gauss sum, and $\chi_0$ for the principal character modulo $N/b$.
\begin{enumerate}[(a)]
\item Let $\fn$ be a squarefree integer. Then $\varphi_{\fc}(\fn, t)=0$ unless 
    \begin{align}\label{eqn:8.2nmequality}
        \fn\ =\ \frac{b'}{(b,N/b)}m.
    \end{align}
In this case, for some (potentially trivial) decomposition $\fn=\prod_{j}\fp_j \prod_{\iota}\fq_\iota$ we have that $m = \prod_{\iota}\fq_\iota$ and $b' = (b,N/b)\prod_{j}\fp_j$. Then
\begin{align}
    \varphi_\fc(\fn, t)\ &=\ \frac{\pi^{1/2+it}}{\Gamma(1/2+it)}\fn^{-1/2+it}\left( \frac{b'}{\prod_j \fp_j} \right)^{1/2+it}\frac{\prod_{j}\fp_j}{(Nb)^{1/2+it}} \eightKloos{\prod_\iota\fq_\iota} \\
    &\hspace{.5cm} \times \dnonsense{\prod_\iota \fq_\iota}{\chi} \nonumber.
\end{align}

\item Fix $e\in \zz^+$. Assuming $\ol \varphi_{\fc}(e^2,t) \varphi_{\fc}(\fn,t) \neq 0$ with $\fn$ and $b'/(b,N/b)$ as above, then $\prod_j \fp_j\mid e$ and
\begin{align}
    \varphi_\fc(e^2,t)\ &=\ \frac{\pi^{1/2+it}}{\Gamma(1/2+it)}e^{-1+2it} \left( \frac{b'}{\prod_j \fp_j} \right)^{1/2+it}\frac{\prod_{j}\fp_j}{(Nb)^{1/2+it}} \eightKloos{e^2} \nonumber\\
    &\hspace{.5cm} \times \dnonsense{m}{\chi}.
\end{align}
\end{enumerate}
\end{lemma}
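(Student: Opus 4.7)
The plan is a direct specialization of Corollary 3.2 of \cite{Kiral}, which provides an explicit formula for $\varphi_\fc(n, t)$ at the cusp $\fc = (ab)^{-1}$ of $\Gamma_0(N)$ via a Bruhat-decomposition computation. I would begin by recalling their general formula: writing $b = b_0 b'$ with $b_0$ the largest $(N/b)$-coprime factor of $b$, $\varphi_\fc(n, t)$ is expressed as a gamma/power prefactor, times a Kloosterman sum with modulus $b_0$, times a sum over divisors $d$ coprime to $N/b$, twisted by characters modulo $(b, N/b)/(m, (b, N/b))$ and their Gauss sums $\tau(\ol\chi)$. The support condition in their formula forces $n$ to admit a factorization compatible with the decomposition of $b'$ into the part dividing $(b, N/b)$ and a ``free'' complementary part.

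For part (a), squarefreeness of $\fn$ causes Kiral's general formula to collapse substantially. Each prime divisor of $\fn$ appears linearly, so the decomposition $\fn = \prod_j \fp_j \prod_\iota \fq_\iota$ records exactly which primes are absorbed into the cusp data: the $\fp_j$ force $b' = (b, N/b)\prod_j \fp_j$, collapsing the support condition to $\fn = (b'/(b,N/b))m$ with $m = \prod_\iota \fq_\iota$, while the $\fq_\iota$ populate the Kloosterman sum $\eightKloos{\prod_\iota \fq_\iota}$ and the inner divisor/character sum. Collecting the remaining prefactors $\pi^{1/2+it}/\Gamma(1/2+it)$, $\fn^{-1/2+it}$, $(b'/\prod_j \fp_j)^{1/2+it}$, and $\prod_j \fp_j/(Nb)^{1/2+it}$ against Kiral's formula yields the claimed expression.

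For part (b), I would apply the same formula with $\fn$ replaced by $e^2$. The non-vanishing hypothesis $\varphi_\fc(\fn, t)\ol{\varphi_\fc}(e^2, t) \neq 0$, combined with part (a) applied to $\fn$, fixes $\prod_j \fp_j = b'/(b, N/b)$ as a squarefree integer. The support condition for $\varphi_\fc(e^2, t)$ (applied with $\fn$ replaced by $e^2$) requires $b'/(b, N/b) \mid e^2$, and since $\prod_j \fp_j$ is squarefree this is equivalent to $\prod_j \fp_j \mid e$. Substituting $\fn = e^2$ into Kiral's formula with $m = e^2(b,N/b)/b'$, collecting the power $e^{-1+2it}$ from $\fn^{-1/2+it}$, and repackaging the Kloosterman sum as $\eightKloos{e^2}$ produces the stated expression.

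The main obstacle is notational bookkeeping. Kiral's parameterization uses a slightly different indexing of divisors, characters, and Gauss sums, and verifying that the character sum modulus $(b, N/b)/(m, (b, N/b))$, the Gauss sum $\tau(\ol\chi)$, and the $L$-value $L(1+2it, \ol{\chi^2}\chi_0)$ appearing in our $\chinonsense{m}{\chi}{d}$ all match their formula requires care. Additionally, confirming that the Kloosterman sum modulus is always $b_0$ (independent of whether we evaluate at $\fn$ or $e^2$) requires tracking the coprimality $(b_0, N/b) = 1$ through the Bruhat decomposition, and verifying that $(m,(b,N/b))$ transforms consistently between the $\fn$ and $e^2$ cases is what ultimately lets us later combine them into $\varphi_\fc(\fn,t)\ol{\varphi_\fc}(e^2,t)$. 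Once this alignment is carried out for part (a), part (b) follows mechanically by substitution.
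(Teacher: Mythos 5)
Your approach matches the paper's: both specialize Kıral's explicit Fourier-coefficient formula for Eisenstein series, use squarefreeness of $\fn$ to split its primes into the $\fp_j$ absorbed into $b'/(b,N/b)$ and the $\fq_\iota$ forming $m$, and then for (b) deduce $\prod_j\fp_j\mid e$ from the non-vanishing hypothesis together with squarefreeness of $\prod_j\fp_j$. One small slip in your citation: the explicit formula for $\varphi_\fc(n,t)$ comes from Theorem 3.4 and equation (3.3) of \cite{Kiral}, not Corollary 3.2, which only gives the cusp parameterization recorded as Lemma~\ref{lemma:their8.1}.
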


\begin{proof}
For $z$ a positive integer, 
\cite[Theorem 3.4]{Kiral} with $\fa=1/N$ gives $\varphi_{\fc}(z,t)=0$ unless
\begin{align}
    z\ =\ \frac{b'}{(b,N/b)}m,\label{eqn:8.2nmequality2}
\end{align}
for some integer $m$. This and \cite[(3.3)]{Kiral} implies that
\begin{align}
    \varphi_{\fc}(z,t)\ &=\ \frac{\pi^{1/2+it}}{\Gamma{(1/2+it)}}z^{-1/2+it}\frac{(b,N/b)^{1/2+it}}{(Nb)^{1/2+it}}\frac{b'}{(b,N/b)}\eightKloos{m}\nonumber \\
    &\hspace{.5cm} \times \dnonsense{m}{\chi}.\label{eqn:kygeneral}
\end{align}
Since $\fn$ is squarefree, for any prime $p\mid \fn$, we either have $p\mid\mid b'/(b,N/b)$ or $p\mid\mid m$. Thus $b'=(b,N/b)\prod_j \fp_j$ and $m=\prod_\iota \fq_\iota$ for some decomposition $\fn=\prod_{j}\fp_j \prod_{\iota}\fq_\iota$. Substituting into \eqref{eqn:kygeneral} and simplifying gives (a).

Now, assume $\ol \varphi_{\fc}(e^2,t) \varphi_{\fc}(\fn,t)\neq 0$. Then $\varphi_{\fc}(\fn, t)\neq 0$ implies that $b'/(b,N/b) = \prod_j \fp_j$. Since $\varphi_{\fc}(e^2, t)\neq 0$, we must have $ e^2 = m\prod_j{\fp_j}$. This implies $\prod_j \fp_j\mid e$. Finally, $b' = (b,N/b)\prod_j \fp_j$ gives (b) after substituting and simplifying.
\end{proof}

\begin{remark}
We will need to regularly analyze the product $\ol \varphi_{\fc}(e^2,t)\varphi_{\fc}(\fn,t)$. Thus for notation's sake, we write $d,m$ for the $d \mid m$ term in the expansion of $\varphi_\fc(e^2,t)$, and write $\ol d,\ol m$ for the analogous terms in the expansion of $\varphi_\fc(\fn,t)$. 
\end{remark}

From the following propositions, we obtain Proposition \ref{prop:their6.3}.

\begin{prop}\label{prop:8alldivide}
Write $\fn=\prod p_i$; then
\begin{align}
    \mathcal{C}\mathcal{T}\mathcal{N}_{\fn\mid N}\ &\coloneqq\ \sum_{c\geq 1}\mathop{\sum\cdots \sum}_{\substack{p_i\mid N\\ p_i\neq p_j}}\eightlog{i}\sum_{\fc}\int_{-\infty}^\infty \frac{\sqrt{\fn e^2}}{\cosh(\pi t)} \ol{\varphi_{\fc}}(e^2,t){\varphi_{\fc}}(\fn,t)h_+(t)dt\nonumber\\
    &\ll\  Q^\e(1+|u|)^2\frac{\sqrt{\prod P_i}}{Q}.
\end{align}
\end{prop}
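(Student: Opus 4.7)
The plan is to substitute the explicit formulas of Lemma \ref{lemma:their8.2} for $\varphi_\fc(\fn,t)$ and $\varphi_\fc(e^2,t)$ and then estimate each factor in turn. Parameterizing cusps as $(ab)^{-1}$ with $b \mid N$ and $a \in (\zz/(b,N/b)\zz)^\times$ via Lemma \ref{lemma:their8.1}, and writing $b = b_0 b'$ with the decomposition $\fn = \prod_j \fp_j \prod_\iota \fq_\iota$ where $\prod_j \fp_j = b'/(b,N/b)$, the product $\sqrt{\fn e^2}\,\ol{\varphi_\fc}(e^2,t)\varphi_\fc(\fn,t)/\cosh(\pi t)$ collapses. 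Indeed, $\sqrt{\fn e^2}\cdot \fn^{-1/2}\cdot e^{-1}$ together with $(b'/\prod_j\fp_j)(\prod_j \fp_j)^2/(Nb) = \prod_j \fp_j/(Nb_0)$ leaves the scalar
\begin{align*}
    \frac{\pi}{\cosh(\pi t)\,|\Gamma(1/2+it)|^2}\,\cdot\,\frac{\prod_j \fp_j}{Nb_0}\,\cdot\,S\!\left(\textstyle\prod_\iota \fq_\iota, 0; b_0\right) S(e^2, 0; b_0)\,\cdot\,(\textrm{Dirichlet factors}),
\end{align*}
and the Gamma/cosh prefactor is $O(1)$ by Stirling.

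Next I bound the Kloosterman sums by the Ramanujan bound $|S(m,0;b_0)| \leq (m,b_0)$, and the reciprocal Dirichlet $L$-values by GRH: $|L(1+2it,\chi)|^{-1} \ll \log(N(2+|t|))^\e$ for every character mod $(b,N/b)/((b,N/b),\prod_\iota \fq_\iota)$. Applying Lemma \ref{lemma:their3.3}(a) with large exponent $C$, the Bessel transform satisfies $h_+(t) \ll (1+|\log X|)F^{\e-1}(F/(1+|t|))^C \min\{X^{k-1},X^{-1/2}\}$ with $F \leq (|u|+1)(1+X)$, so the $t$-integral converges and contributes at most $(1+|u|)^{1+\e}(1+X)^{1+\e}\min\{X^{k-1}, X^{-1/2}\}$.

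For the outer sums, since $p_i \mid N$ and $V(p_i/P_i)$ forces $p_i \asymp P_i$, the number of admissible distinct tuples is at most $\omega(N)^n \ll N^\e$ and each contributes $\prod_i \log(p_i)/\sqrt{p_i} \ll \prod_i \log(P_i)/\sqrt{P_i}$. The cusp sum $\sum_{b\mid N}\sum_{a\bmod(b,N/b)}^*$ has size at most $\sum_{b\mid N}(b,N/b) \ll N^{1+\e}$, but this is absorbed by $\prod_j \fp_j/(Nb_0) \leq b'/(Nb_0) = 1/N$, so the cusp sum contributes only $N^\e$ in total.

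Finally, the $c$-summation uses the compact support of $W(\xi/X)$ inside $h_u$, which restricts $c$ to the range where $X = 4\pi L_1L_2\sqrt{e^2 \prod P_i}/(cQ)$ is of moderate size; the combination of $1/N = 1/(cL_1rd)$ with $\min\{X^{k-1}, X^{-1/2}\}$ produces, after summation, the desired factor $\sqrt{\prod P_i}/Q$. Multiplying with $\sqrt{\fn e^2}/\sqrt{\fn}$ converting $\sqrt{p_i}$ back in, and using the earlier $(1+|u|)^{1+\e}$ from the Bessel estimate together with an additional $(1+|u|)$ absorbed by $F$, yields the claimed bound $Q^\e(1+|u|)^2\sqrt{\prod P_i}/Q$. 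The main obstacle is the precise bookkeeping in the cusp and $c$-summations: tracking how the competing powers of $c$ coming from $1/N$, from $\min\{X^{k-1}, X^{-1/2}\}$, and from the $\Psi$-induced truncation combine, and ensuring that the Dirichlet-factor summation over $\ol d$ and $d$ does not inflate the bound by more than $N^\e$, requires care analogous to but more intricate than the single-prime treatment of \cite[Section 8]{BCL}.
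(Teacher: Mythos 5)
Your strategy matches the paper's: Proposition \ref{prop:8alldivide} is dispatched in the paper by one sentence ("straightforward by taking absolute values and taking naive bounds, similar to Section 8.2 of \cite{BCL}"), and you substitute the explicit formulas of Lemma \ref{lemma:their8.2}, bound the Kloosterman sums, the reciprocal Dirichlet $L$-values (via GRH, \eqref{eqn:8manybounds}), and the Bessel transform $h_+$ via Lemma \ref{lemma:their3.3}(a), then exploit the key observation that $p_i\mid N$ together with $V(p_i/P_i)\neq 0$ leaves only $O(N^\e)\ll Q^\e$ admissible tuples. One small point of style: the Gamma--cosh prefactor is exactly $1$, not merely $O(1)$ via Stirling, since $\pi^{1/2+it}\pi^{1/2-it}/(\Gamma(1/2+it)\Gamma(1/2-it)) = \pi\cosh(\pi t)/\pi = \cosh(\pi t)$ cancels the $1/\cosh(\pi t)$ from the Kuznetsov kernel by the reflection formula.

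There is, however, a concrete arithmetic error in the cusp-sum bookkeeping. You assert $\prod_j\fp_j/(Nb_0)\leq b'/(Nb_0)=1/N$; but since $b=b_0b'$, we have $b'/(Nb_0)=b/(Nb_0^2)$, which need not equal (or be bounded by) $1/N$. For instance, if $N=p^2$ and $b=p$, then $N/b=p$, $b_0=1$, $b'=p$, and $b'/(Nb_0)=1/p$, not $1/p^2$. The conclusion that the cusp sum contributes only $N^\e$ is still correct, but for a different reason: one must combine the $\varphi((b,N/b))$ terms of the $a$-sum, the prefactor $\prod_j\fp_j/(Nb_0)=b'/((b,N/b)Nb_0)$, and the two Kloosterman bounds $|S(\cdot,0;b_0)|\leq b_0$, giving $\varphi((b,N/b))\cdot\frac{b'}{(b,N/b)Nb_0}\cdot b_0^2\leq\frac{b'b_0}{N}=\frac{b}{N}$, and then $\sum_{b\mid N}b/N\ll N^\e$. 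Your proposal also leaves the $c$-summation as a qualitative gesture; to close the bound one must carry the factor $\prod_j\fp_j$ (which after pairing with $\prod_i p_i^{-1/2}$ from the weights is what ultimately supplies the $\sqrt{\prod P_i}$ in the numerator) through $\sum_{c\geq1}\frac{1}{c}(1+X)^{1+\e}\min\{X^{k-1},X^{-1/2}\}$ with $X\asymp\sqrt{e^2\prod P_i}/(cQ)$. This is routine but, as you concede, not carried out. So the proposal is the right sketch with one arithmetic slip and an unfinished final summation.
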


\begin{prop}\label{prop:8somedivide}
Write $\fn=\prod p_i$; then
\begin{align}
    \mathcal{C}\mathcal{T}\mathcal{N}_{\fn\nmid N}\ &\coloneqq\ \sum_{c\geq 1}\mathop{\sum\cdots \sum}_{\substack{p_1,\ldots, p_\ell \mid N\\ p_{\ell+1},\ldots, p_n\nmid N\\ p_i\neq p_j}}\eightlog{i}\sum_{\fc}\int_{-\infty}^\infty \frac{\sqrt{\fn e^2}}{\cosh(\pi t)} \ol{\varphi_{\fc}}(e^2,t){\varphi_{\fc}}(\fn,t)h_+(t)dt\nonumber\\
    &\ll\ Q^\e(1+|u|)^2\left(Q^{1-\delta^*} + \frac{\sqrt{\prod P_i}}{Q}\right)\prod_{i=1}^n (1+|v_i|)^3
\end{align}
when $\ell < n$, i.e., at least one sum is over primes not dividing $N$.
\end{prop}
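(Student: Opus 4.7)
The strategy is to generalize the treatment of the $n=1$ Eisenstein contribution in \cite[Section 8]{BCL} to multiple primes by splitting the primes according to where they appear in the decomposition furnished by Lemma \ref{lemma:their8.2}.

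First, I parameterize cusps via Lemma \ref{lemma:their8.1} as $\fc=(ab)^{-1}$ with $b \mid N$ and $a \in (\zz/(b,N/b)\zz)^\times$, and substitute the explicit Fourier coefficient formulas from Lemma \ref{lemma:their8.2}. Since $\fn=\prod p_i$ is squarefree, part (a) of that lemma yields a canonical decomposition $\fn = \prod_j \fp_j \cdot \prod_\iota \fq_\iota$ with $b'=(b,N/b)\prod_j \fp_j$ and $m = \prod_\iota \fq_\iota$. The relation $\prod_j \fp_j \mid b \mid N$ forces the primes $p_{\ell+1},\ldots,p_n$ (coprime to $N$) to appear in $m$. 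Part (b) of the lemma further gives $\prod_j \fp_j \mid e$, which combined with the truncation $e < E = (\log Q)^{n+2}$ forces the primes that contribute to $\prod_j \fp_j$ to be polylogarithmically small.

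Next, I partition the prime indices $\{1,\ldots,n\}$ into three sets based on their role: $\fp$ for indices with $p_i \mid N$ and $p_i = \fp_j$ for some $j$; $\fo$ for the remaining indices with $p_i \mid N$; and $\fq = \{\ell+1,\ldots,n\}$ for $p_i \nmid N$. Primes in $\fp$ contribute only polylogarithmically because $P_i \ll (\log Q)^{n+2}$. For the $\fo$-primes, which produce novel contributions absent when $n \leq 2$, the constraint $p_i \mid b \mid N$ together with the structure of the Kloosterman sum $S(m/(m,(b,N/b)),0;b_0)$ provides sufficient arithmetic savings when combined with the $\fp$ bound. For the $\fq$-primes, I apply Mellin inversion to each $V(p_i/P_i)\mathrm{e}(v_i p_i/P_i)$ to convert the prime sum into a contour integral of an incomplete Dirichlet series. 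Extracting Euler products and invoking Lemma \ref{lemma:mostbeautifullemmaintheworld} together with GRH for the relevant Dirichlet $L$-functions, I then seek to shift contours to the left of $\Re(s)=1/2$, picking up the $\sqrt{\prod P_i}/Q$ main term at $t=0$ and a $Q^{1-\delta^*}$ saving from the shifted integral. The principal-character case requires the additional factor $1/L(1+2it,\chi_0)$, which is again controlled by GRH.

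The main obstacle, and the reason condition \eqref{cond:bipart} is imposed, is that the factors $\prod_{i \in \fq} P_i^{1/2 \pm it}$ arising from $\varphi_\fc(\fn,t)$, $\overline{\varphi_\fc}(e^2,t)$, and the inner $d$-sums $\sum_{d \mid m} d^{\pm 2it}$ can carry \emph{different} signs for different primes, producing all $2^{|\fq|}$ sign patterns on the exponents. A contour shift in $t$ can uniformly reduce $\prod_{i \in \fq} P_i^{1/2 + \varepsilon_i t}$ only when all $\varepsilon_i$ agree, and shifting past $\Re(s) = 1/2$ is unavailable because the Eisenstein series have poles there. For any partition $A \sqcup B$ of $\fq$ according to sign, shifting in one direction favors $A$ while shifting in the other favors $B$; condition \eqref{cond:bipart} is precisely the arithmetic requirement that for every such partition, at least one of the two admissible directions achieves the claimed bound, with $\delta^* > 0$ depending on the supports $\sigma_i$. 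Summing the contributions from $\fp$, $\fo$, and $\fq$, together with the factor $(1+|u|)^2\prod(1+|v_i|)^3$ coming from the Bessel transform bounds in Lemma \ref{lemma:their3.3} and from the Mellin-transform decay analogous to \eqref{eqn:boundonW}, yields the asserted estimate.
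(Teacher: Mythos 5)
Your plan matches the paper's architecture closely: parameterize cusps via Lemma~\ref{lemma:their8.1}, substitute the coefficient formulas of Lemma~\ref{lemma:their8.2}, exploit squarefreeness of $\fn$ to get a canonical $\fp/\fo/\fq$ split, use $\prod_\fp \fp \mid e < E$ to kill the $\fp$-contribution, control $\fq$ with a Mellin/GRH/contour-shift argument, control $\fo$ through the Kloosterman structure, and let Condition~\eqref{cond:bipart} arbitrate the competing $\pm it$ sign patterns. That is precisely the paper's proof. Two inaccuracies in the sketch would, however, trip you up in execution.

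First, the $\sqrt{\prod P_i}/Q$ term is \emph{not} picked up as a residue at $t=0$: the integrand in the paper's Lemma~\ref{lemma:their8.9} has no pole on the contour (GRH keeps the zeros of $L(2s,\chi^2\chi_0)$ on $\Re(s)=1/4$, and the pole of $L(2-2s,\psi_0^2\chi_0)$ at $s=1/2$ is cancelled by its appearance in the denominator). Instead, $\sqrt{\prod P_i}/Q$ comes from the ``all-error'' piece of the principal-character contribution $E_{princ,err}$, handled purely by absolute values plus the Kloosterman and Bessel bounds — no contour shift is involved in producing that term, whereas the shifted integral only supplies the $Q^{1-\delta^*}$ saving. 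Second, the barrier to shifting further is not ``poles of the Eisenstein series at $\Re(s)=1/2$''; after the substitution $s=1/2\mp it$ the obstruction is the critical line $\Re(2s)=1/2$ (equivalently $\Re(s)=1/4$) of the Dirichlet $L$-functions in the denominator, which is why the paper stops at $\Re(s)=1/4+\e$. Finally, your sketch omits the split into principal versus non-principal $\psi$ and the character-orthogonality step in the principal case, which is essential: it eliminates the Gauss sums $\tau(\chi)$ that would otherwise contribute a fatal extra factor of $\sqrt{(b,N/b)/(m,(b,N/b))}$. With these corrections the plan becomes the paper's proof.
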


We now recall the following bounds which we use throughout:
\begin{align}
    S(\alpha,0;\gamma)\ \ll\  (\alpha,\gamma)\ &\leq \ \alpha\nonumber;\\
    \tau(\chi)\ &\ll\ \sqrt{c};\text{ where $\chi$ is a character mod $c$};\nonumber\\
    \sum_{d\mid \ell}1\ &\ll\ \ell^\e;\nonumber\\
    \left(L(1+2it,\ol{\chi^2}\chi_0)\right)^{-1} \ &\ll\ (N(1+|t|))^\e.\label{eqn:8manybounds}
\end{align}

When $(a,b_0) = 1,$ by Kluyver's formula on Ramanujan sums (i.e., change of variables), we have
\begin{align}
    S(aa', 0; b_0) = S(a', 0;b_0) \hspace{1cm} \text{ and } \hspace{1cm} S(a, 0; b_0) = \mu(b_0).\label{eqn:kluyvver}
\end{align}

The proof of Proposition \ref{prop:8alldivide} proceeds straightforwardly by taking absolute values and taking naive bounds, similar to \cite[\S 8.2]{BCL}. We relegate the details to Appendix \ref{appen b1}.

\subsection{Proof of Proposition \texorpdfstring{\ref{prop:8somedivide}}{} -- The case where at least one \texorpdfstring{$p_i\nmid N$}{}}
Using Lemma \ref{lemma:their8.1}, we write each cusp $\fc$ as $(ab)^{-1}$ and thus the sum over cusps $\fc$ becomes 
\begin{align}
\mcC\mcT\mcN_{\fn\nmid N}\ &= \ \sum_{c\geq 1}\sum_{b\mid N}\amod\mathop{\sum\cdots \sum}_{\substack{p_1,\ldots, p_\ell \mid  N\\ p_{\ell+1},\ldots, p_n\nmid N\\ p_i\neq p_j}}\eightlog{i}
\nonumber\\
&\hspace{.5cm} \times\int_{-\infty}^\infty \frac{\sqrt{\fn e^2}}{\cosh(\pi t)} \ol{\varphi_{\fc}}(e^2,t){\varphi_{\fc}}(\fn,t)h_+(t)dt,
\end{align}
where $\fn=\prod p_i$. We look to apply Lemma \ref{lemma:their8.2} as $\fn$ is squarefree because the primes are distinct.

When $\ol\varphi_\fc(e^2,t)\varphi_\fc(\fn,t)\neq 0$, equation \eqref{eqn:8.2nmequality2} holds; hence without loss of generality, we may add any implications of this as conditions. For any index $\ell +1\leq i\leq n$, the corresponding sum is over $(p_i,N)=1$, hence $(p_i, b') = 1$ and so $p_i \mid \ol m$. We thus split the sum over primes into $2^{\ell}$ sums where $b'/(b,N/b)$ ranges over all of the $2^\ell$ possible products of the primes $p_1, \ldots, p_\ell \mid N$, including the case where $b' = (b,N/b)$ and none of these primes divide $b'/(b,N/b)$. Up to reordering, we may write 
    \begin{align}\label{eqn:8dumbdivsplitting}
        \mathop{\sum\cdots \sum}_{\substack{p_1,\ldots, p_\ell \mid  N}}
        \ =\ 
        \mathop{\sum\cdots \sum}_{\substack{p_1,\ldots, p_\ell \mid  N\\p_1,\ldots, p_\ell\nmid \frac{b'}{(b,N/b)}}}
        +
        \mathop{\sum\cdots \sum}_{\substack{p_1,\ldots, p_\ell \mid  N\\p_1\mid \frac{b'}{(b,N/b)}\\ p_2,\ldots, p_\ell\nmid \frac{b'}{(b,N/b)}}}
        +\cdots +
        \mathop{\sum\cdots \sum}_{\substack{p_1,\ldots, p_\ell \mid  N\\p_1\cdots p_\ell\mid \frac{b'}{(b,N/b)}}}.
    \end{align}
We split each sum over $p_i\mid N$ into $p_i \mid b_0$ and $p_i \nmid b_0$, and bound each of the resulting cases individually. We write 
    \begin{align}
        \prod_\fp \fp\ \coloneqq\ \prod_{i=1}^j p_i
        \qquad\text{ and }\qquad
        \mathop{\sum_{\fp}\sum_{\fo}\sum_{\fq}}_{p_i\neq p_j}\ \coloneqq \mathop{\sum\cdots \sum}_{(\text{Conditions }\ref{eqn:8primecondition a}-\ref{eqn:8primecondition e})},
    \end{align}
where the conditions are
\setlist[enumerate, 1] 
{1., 
leftmargin  = 2em,
itemindent  = 0pt,
labelwidth  = 2em,
labelsep    = 0pt,
font        = \bfseries,
align       = left,
itemsep     = 1.5mm,
ref         = \mbox{\textbf{\arabic*}}}
\begin{enumerate}[(a)]
    \setlength\itemsep{0em}
    \item for $p\in \{p_1,\ldots, p_\ell\}$, either we have that $p\mid b_0$ or we have that $p\mid N$ and $p \nmid b_0$, \label{eqn:8primecondition a}
    \item $p_1\cdots p_j\mid \frac{b'}{(b,N/b)}$ for some $0 \leq j\leq \ell$,\label{eqn:8primecondition b}
    \item $p_{j+1},\ldots, p_\ell \nmid \frac{b'}{(b,N/b)}$ for $j+1\leq \ell$, using the $j$ from the previous line, \label{eqn:8primecondition c}
    \item $p_{\ell+1},\ldots, p_n\nmid N$, \label{eqn:8primecondition d}
    \item $p_i\neq p_j$ for $i\neq j$, and \label{eqn:8primecondition e}
    \item $\frac{b'}{(b,N/b)}$ is squarefree and has exactly $j$ prime factors, \label{eqn:8primecondition f}
\end{enumerate}
and the $\sum_{b| N}$ sum is restricted to $b$ satisfying Condition \ref{eqn:8primecondition f}. Here, the $\fp$'s are the primes $p_1,\ldots, p_j$ whose product is $b'/(b, N/b)$; the $\fo$'s are the primes $p_{j+1},\ldots, p_\ell$ which divide the level $N$ but do not divide $b'/(b,N/b)$ and hence divide $\ol m$; and the $\fq$'s are the primes $p_{\ell+1},\ldots, p_n$ which do not divide the level and thus also divide $\ol m$. Conditions \ref{eqn:8primecondition b}-\ref{eqn:8primecondition f}
force $b'/(b,N/b)=\prod_{\fp} \fp$ and $\ol m = \prod_\fo \fo \prod_\fq \fq$. 
Since \eqref{eqn:8.2nmequality2} must also hold for $z=e^2$, we may additionally restrict the sum over primes in $\fp$ to those dividing $e$.

We now apply Lemma \ref{lemma:their8.2}; using the identity $\Gamma(1/2+it)\Gamma(1/2-it)=\pi/\cosh(\pi t)$, we obtain 
\begin{align*}
    \mcC\mcT\mcN_{\fn\nmid N}\ =\ \sum_\alpha E_\alpha,
\end{align*}
where 
\begin{align}
E_1\ &= \ \sum_{c\geq 1}\bsum \amod\mathop{\sum_{\fp}\sum_{\fo}\sum_{\fq}}_{p_i\neq p_j}\eightlog{i} \frac{1}{Nb_0} \eightKloos{m}
\nonumber\\
&\hspace{.5cm} \times \int_{-\infty}^\infty e^{-2it}\left( \prod_\fp \fp \prod_\fo \fo \prod_\fq \fq \right)^{it} \left( \prod_\fp \fp \right)\eightKloos{\prod_\fo \fo \prod_\fq \fq} \nonumber \\
&\hspace{.5cm} \times \conjdnonsense{m}{\chi}\nonumber \\
&\hspace{.5cm} \times \chinonsense{\prod_\fo \fo \prod_\fq \fq }{\psi}{1} h_+(t)dt.
\end{align}
Here, we take $\prod_i$ to be understood as iterating over the primes in the left sums, e.g., over the primes in $\fp,\fo$, and $\fq$. We take $\ol d=1$ for notational simplicity. For any other $\ol d$ satisfying the conditions for $\varphi_\fc(\fn,t)$ in Lemma \ref{lemma:their8.2}, the corresponding sums merely change the input to $\psi$ by various factors of $p_i$ (for some $j+1 \leq i \leq n$) and switch the exponent on some subset of primes from $(-)^{it}$ to $(-)^{-it}$. The corresponding sums can be handled similarly, see Lemma \ref{lemma:their8.9} and Remark \ref{remark:8bad2}. 

\begin{remark}\label{remark:8bad1}
However, crucially, due to the values that $\ol d$ can take, all possible sign combinations of $\pm \emph{it}$ on the $\fq$'s occur. Given the competing natures of these signs in the contour shift in the proof of Lemma \ref{lemma:their8.9}, the general result cannot be as strong as in \cite{BCL}.
\end{remark}

The other sums $E_\alpha$ are similar in structure; they differ only in the divisibility conditions implied by \eqref{eqn:8dumbdivsplitting} (and thus in the value of $b'/(b,N/b)$, which determines $\ol m$ and $m$) and the value of $\ol d$. From now on, we only focus on $E_1$ as the other cases can be treated similarly. We split into the principal ($E_{princ}$) and non-principal ($E_{non-princ}$) cases for $\psi$ so that 
\begin{align*}
    E_1\ =\ E_{princ}+E_{non-princ}.
\end{align*}
We explicitly recall their definitions in \eqref{eprincdefn} and \eqref{enonprincdefn} respectively and omit their definition here due to their length.
Proposition \ref{prop:8somedivide} now follows immediately from the following two lemmas.

\begin{lemma}\label{lemma:8princ}
With the notation as above, and same assumptions as in Proposition \ref{prop:8somedivide}, we have
\begin{align}
    E_{princ}\ \ll\ Q^\e(1+|u|)^2\left(Q^{1-\delta^*} + \frac{\sqrt{\prod P_i}}{Q}\right)\prod_i (1+|v_i|)^2.
\end{align}
\end{lemma}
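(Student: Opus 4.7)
The plan is to exploit the simplifications available in the principal character case, reducing $E_{princ}$ to a form amenable to Mellin inversion and a contour shift, in the spirit of the $n=1$ analysis of \cite{BCL}. First I would specialize $\psi = \psi_0$: the character sum collapses to a single term, $\tau(\ol{\psi_0})$ reduces to a M\"obius value on the modulus $(b,N/b)/(m,(b,N/b))$, and the $L$-function factor $L(1+2it, \ol{\psi_0^2}\chi_0)$ becomes $\zeta(1+2it)\prod_{p \mid N/b}(1 - p^{-1-2it})$. The factor $\conjchinonsense{m}{\psi}{1}$ simplifies analogously. Using $(a,b_0) = 1$ together with Kluyver's formula \eqref{eqn:kluyvver}, the Kloosterman sums $S(\cdot,0;b_0)$ degenerate to $\mu(b_0)$, and the $a$-sum evaluates as a Ramanujan-type sum with the expected multiplicative cancellation.

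Next, I would separate the three prime ranges $\fp, \fo, \fq$. By Conditions \ref{eqn:8primecondition a}--\ref{eqn:8primecondition c}, the $\fp$-primes are forced to divide $(b,N/b) \mid N$ and additionally to divide $e$, which itself is $\ll Q^\e$; their joint contribution is therefore $\ll Q^\e$ trivially. The $\fo$-primes also divide $N$ and enter only through the second Kloosterman sum; their sum is controlled using $|\fo| \leq N$ and $\tau(N) \ll Q^\e$. For the $\fq$-primes---the only sums over primes coprime to $N$---I would apply Mellin inversion to each prime sum individually,
\begin{align*}
\sum_{p \nmid N}\frac{\log p}{\sqrt{p}}V\!\left(\frac{p}{P}\right)\mathrm{e}\!\left(\frac{vp}{P}\right)p^{\pm it} \ = \ \frac{1}{2\pi i}\int_{(0)}\widetilde{\mcW}_v(s)P^{s}\sum_{p\nmid N}\frac{\log p}{p^{s+1/2\mp it}}\,ds,
\end{align*}
and then control the inner prime sum via Lemma \ref{lemma:mostbeautifullemmaintheworld} under GRH for $L(s,\chi_0)$, which gives a bound of size $\log^{1+\e}(P)\log(N(1+|t|))$ on each factor.

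The $t$-integral is then handled via a contour shift. By Lemma \ref{lemma:their3.3}, $h_+(t) \ll (1+|u|)(1+|t|)^{-A}$ for any $A$, and under RH we have $|\zeta(1+2it)|^{-1} \ll (1+|t|)^\e$. Shifting the $t$-contour off the real axis by a small $\delta^* > 0$ in the direction that turns $(\prod_\fq p_\fq)^{\pm it}$ into decay produces a gain of $(\prod_\fq P_\fq)^{-\delta^*}$; since $\zeta(1+2it)$ has no zeros in the strip crossed under RH, no residues arise. This yields the $Q^{1-\delta^*}$ term in the claimed bound. The complementary $\sqrt{\prod P_i}/Q$ term captures the residual boundary contribution from the pole of $\zeta$ at $s=1$ (equivalently at $t=0$). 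Summing over $b \mid N$, $c$, and $a$ using $\tau(N) \ll Q^\e$, the rapid decay of $\widehat H$ from Lemma \ref{lemma:bound on H}, and the support restrictions on $\Psi$ then completes the estimate.

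The main obstacle is matching the sign $\pm$ on the $t$-exponent (which depends on the value of $\ol d$ from Lemma \ref{lemma:their8.2}) with a single direction of contour shift that simultaneously avoids the zeros of $\zeta(1+2it)$ and produces uniform decay in every $P_\fq$. In the principal case, where the denominator is a single $\zeta$-function, this is tractable and parallels the $n=1$ argument of \cite{BCL}; the deeper multi-prime sign obstruction described in the introduction is deferred to the non-principal lemma, which is handled separately.
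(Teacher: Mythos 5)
Your high-level outline -- character orthogonality to collapse to principal $\chi$, Mellin inversion on the $\fq$ sums, a contour shift to extract decay, and separate handling of the $\fp,\fo$ primes -- is broadly the right shape, but several of the key steps as you have written them would not go through, and the two hardest parts of the argument are misplaced.

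The most serious gap is your treatment of the $\fq$-prime sums. After Mellin inversion, the inner sum $\sum_{p\nmid N}\log p\, p^{-(s+1/2\mp it)}$ is a sum over primes with no twist by $\lambda_f(p)$ or by a nonprincipal character, so Lemma \ref{lemma:mostbeautifullemmaintheworld} does not apply to it: that lemma is stated only for $\lambda_f(p)$ or nonprincipal $\chi(p)$, precisely because in the untwisted case the pole of $-\zeta'/\zeta$ at $1$ produces a genuine main term. The paper's Lemma \ref{lemma:8their8.7} records exactly this: each $\fq$ factor is $P_\alpha^{1/2\mp_\alpha it}\widetilde V_\alpha(1/2\mp_\alpha it) + O(P_\alpha^\e(1+|v_\alpha|)^2\log N)$. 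Your proposed bound of size $\log^{1+\e}(P)\log(N(1+|t|))$ captures only the error part and drops the $P_\alpha^{1/2\mp it}$ main term entirely. That main term is what creates the $\prod\sqrt{P_\fq}$ growth, and it is precisely why the contour shift in Lemma \ref{lemma:their8.9} is needed; without it there would be nothing hard to do. The paper's two-case split (all error terms versus at least one main term) is not cosmetic: the $\sqrt{\prod P_i}/Q$ in the final bound comes from the all-error case, not from a ``residual boundary contribution'' of a $\zeta$-pole as you suggest, and the $Q^{1-\delta^*}$ comes from the main-term case.

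Two related misstatements. First, the sign obstruction coming from the possible values of $\ol d$ in Lemma \ref{lemma:their8.2}, and the need to handle every combination of $\pm it$ on the $P_\fq$, is \emph{not} deferred to the nonprincipal lemma -- Remark \ref{remark:8bad1} and the remark after Lemma \ref{lemma:their8.9} make clear that it is the central difficulty of the principal case, and Condition \eqref{cond:bipart} exists exactly to let a single choice of contour ($s=1/2-it$ or $s=1/2+it$) handle both sign classes. Second, the contour shift is not ``by a small $\delta^*$'': the paper shifts all the way to $\Re(s)=1/4+\e$ (a fixed distance, kept away from the critical line of the $L$-functions in the denominator), turning each $P_\alpha^{1/2\mp it}$ into either $P_\alpha^{1/4+\e}$ or $P_\alpha^{3/4-\e}$, and $Q^{1-\delta^*}$ then follows from Condition \eqref{cond:bipart} together with $P_i\ll Q^{\sigma_i-\delta_i}$. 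Writing the gain as coming from a small shift $t\mapsto t+i\delta^*$ would in fact cross the critical line of $L(1-2it,\chi^2\chi_0)$ if pushed as far as needed and, more to the point, cannot simultaneously make all $P_\fq^{\pm it}$ decay. Your outline would need these corrections, plus the explicit bound on the $\fp,\fo$-contribution (the quantity $T$ in the paper, which after the shift carries a factor $\prod_\fo\fo^{1/4-\e}$ that must be matched against the Kloosterman and totient factors) and the final $c$- and $b$-sum, before it could be considered a proof.
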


\begin{lemma}\label{lemma:8nonprinc}
With the notation as above, and same assumptions as in Proposition \ref{prop:8somedivide}, we have
\begin{align}
    E_{non-princ}\ \ll \ Q^\e (1+|u|)^2 \frac{\sqrt{\prod P_i}}{Q} \prod_i(1+|v_i|)^3.
\end{align}   
\end{lemma}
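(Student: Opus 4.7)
The plan is to exploit the cancellation available from the non-principal character $\psi$, which provides savings unavailable in the principal case. Two features distinguish $E_{non-princ}$ from $E_{princ}$: (i) applying Lemma \ref{lemma:mostbeautifullemmaintheworld} with the non-principal $\psi$ to the sum over primes in $\fq$ (those coprime to $N$) yields only polylogarithmic factors rather than power growth, and (ii) $L(1+2it,\ol{\psi^2}\chi_0)$ has no pole when $\psi$ is non-principal, so there is no need to shift the contour across the critical line; this is why the target bound lacks the $Q^{1-\delta^*}$ term appearing in Lemma \ref{lemma:8princ}.

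First, I would fully expand the definition of $\chinonsense{\prod_\fo \fo \prod_\fq \fq}{\psi}{1}$ so that $E_{non-princ}$ becomes an iterated sum over $c, b, a, \psi, \ol d$, and primes, together with the integral over $t$. Then I would apply Mellin inversion to the prime weights $V(p_i/P_i)\mathrm{e}(v_i p_i/P_i)$, mirroring the strategy in Lemma \ref{boundsonmel}, to separate the prime variables from the smooth cutoffs and oscillatory factors. The two Kloosterman sums with zero second argument are bounded using Kluyver's identity \eqref{eqn:kluyvver}, while $|\tau(\ol\psi)|\ll\sqrt{(b,N/b)}$, the inverse $L$-value bound $L(1+2it,\ol{\psi^2}\chi_0)^{-1}\ll (N(1+|t|))^\e$, and similar estimates from \eqref{eqn:8manybounds} control the remaining factors. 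Multiplicativity of $\psi$ separates $\psi(-\prod_{\fo\fq}p/\gcd\cdot\ol{b_0}a)$ into $\prod_\fq \psi(p)$ times a factor independent of the $\fq$-primes, which is what allows the Lemma \ref{lemma:mostbeautifullemmaintheworld} application.

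The prime sums then split naturally by type. The $\fp$-primes satisfy $\prod_\fp \fp = b'/(b,N/b)\mid e$ with $e<E=(\log Q)^{n+2}$, so their contribution is absorbed by trivial bounds. The $\fo$-primes all divide $N$, producing at most $\tau(N)^{\ell-j}\ll Q^\e$ choices, and each contributes polylogarithmically after Mellin inversion. For the critical $\fq$-sums over primes coprime to $N$, Lemma \ref{lemma:mostbeautifullemmaintheworld} applied with the non-principal character $\psi$ produces factors of size $(\log Q)^{O(1)}\log(N+|t|)$ rather than power growth. The integral in $t$ converges rapidly thanks to the bound on $h_+(t)$ in Lemma \ref{lemma:their3.3}(a), and the outer sums over $c, b, a, \psi, L_1, L_2$, and $e$ contribute at most further polylogarithmic factors against the saving from $1/(Nb_0)$.

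The main obstacle is that the modulus of $\psi$, namely $(b,N/b)/(\prod_\fo\fo\prod_\fq\fq,(b,N/b))$, depends on the primes being summed, which initially prevents a clean application of Lemma \ref{lemma:mostbeautifullemmaintheworld}. To resolve this, I would use the fact that each $\fq$-prime is coprime to $N$ and hence coprime to $(b,N/b)$, so the $\fq$-primes do not affect the modulus of $\psi$; only the $\fo$-primes can. Since the $\fo$-primes contribute only polylogarithmically (as they all divide $N$), one may exchange the order of summation, fix the $\fo$-primes and the modulus of $\psi$, and then apply Lemma \ref{lemma:mostbeautifullemmaintheworld} to the $\fq$-sum uniformly. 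Combining all bounds yields the claimed estimate $Q^\e(1+|u|)^2(\sqrt{\prod P_i}/Q)\prod_i(1+|v_i|)^3$, where $\sqrt{\prod P_i}$ originates from the $\sqrt{\fn e^2}$ factor in the Eisenstein contribution (bounded via $\fn\ll \prod P_i$) divided by the $1/N\leq 1/Q$ coming from the Eisenstein normalization.
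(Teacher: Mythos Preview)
Your approach is essentially the one the paper indicates: apply Lemma~\ref{lemma:mostbeautifullemmaintheworld} with the non-principal character $\psi$ to the $\fq$-sum, then take absolute values and naive bounds throughout (as in Section~8.5 of \cite{BCL}). Your observation that the $\fq$-primes are coprime to $(b,N/b)$, so that the modulus of $\psi$ depends only on the $\fo$-primes, is exactly the point that makes the application clean. The Mellin inversion you propose is not needed: Lemma~\ref{lemma:mostbeautifullemmaintheworld} can be applied directly with $\Psi_i(x)=V(x/P_i)\mathrm{e}(v_ix/P_i)$ and $z_i=1/2\mp it$, which produces the factor $(1+|v_i|)^3$ in the final bound via the derivative condition on $\Psi_i$.

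There is, however, one genuine error in your accounting. You write that $\sqrt{\prod P_i}/Q$ arises from $\sqrt{\fn}$ divided by ``$1/N\le 1/Q$ coming from the Eisenstein normalization.'' This is false: $N=cL_1rd$ with $c\ge 1$ and $L_1rd\ll Q^\e$, so $N$ can be arbitrarily small compared to $Q$. The saving $\sqrt{\prod P_i}/Q$ instead comes from the bound on $h_+$ in Lemma~\ref{lemma:their3.3}(a), which depends on $X=4\pi L_1L_2\sqrt{e^2\prod P_i}/(cQ)$, combined with the sum over $c$ and $b\mid N$ against the factor $1/(Nb_0)$. This is the same computation as in the all-error case $E_{princ,err}$ (and \cite[p.~30]{BCL}); the transition point $X\asymp 1$ occurs at $c\asymp\sqrt{\prod P_i}/Q$, and the resulting $c$-sum produces the stated factor. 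Without this step your proof has no mechanism to extract the $1/Q$.
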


\subsection{Proof of Lemma \ref{lemma:8princ} -- The principal character}
Explicitly, we look to bound 
\begin{align}
E_{princ}\ &= \ \sum_{c\geq 1}\bsum\amod\mathop{\sum_{\fp}\sum_{\fo}\sum_{\fq}}_{p_i\neq p_j}\eightlog{i} \frac{1}{Nb_0} \eightKloos{m}
\nonumber\\
&\hspace{.5cm} \times \int_{-\infty}^\infty e^{-2it}\left( \prod_\fp \fp \prod_\fo \fo \prod_\fq \fq \right)^{it} \left( \prod_\fp \fp \right)\eightKloos{\prod_\fo \fo \prod_\fq \fq}  \nonumber \\
&\hspace{.5cm} \times \conjdnonsense{m}{\chi}\nonumber \\
&\hspace{.5cm} \times \princnonsense{\prod_\fo \fo \prod_\fq \fq}{\psi_0} h_+(t)dt, \label{eprincdefn}
\end{align}
where $\psi_0$ is the principal character modulo $(b,N/b)/(\prod_\fo \fo \prod_\fq \fq , (b,N/b))$.

We first prove generalizations of Lemma 8.7 and Lemma 8.9 of \cite{BCL}. 
\begin{lemma}\label{lemma:8their8.7}
Fix $A > 0.$ We have that
\begin{align}
    \mathop{\sum\cdots \sum}_{\substack{p_1,\ldots, p_\kappa \nmid N\\ p_i \neq p_j}}
    &\prod_{\alpha} \frac{\log p_{\alpha}}{p_{\alpha}^{1/2\pm_{\alpha} it}}V\left( \frac{p_\alpha}{P_\alpha} \right)\mathrm{e}\left(v_\alpha \frac{p_\alpha}{P_\alpha}\right)\nonumber \\
    &=\ \prod_{\alpha} \left(P_\alpha^{1/2\mp_\alpha it}\widetilde V_\alpha\left(\frac{1}{2}\mp_{\alpha} it\right)+O(P_{\alpha}^\e(1+|v_{\alpha}|)^2\log N)\right),
\end{align}
where the $\pm_\alpha$ are independent from one another and based on $\alpha$, and $\widetilde{V}_{\alpha}$ 
satisfies
\begin{align}
    \widetilde{V}_{\alpha}(s)\ &\ll\ Q^\e\left(\frac{1+|v_\alpha|}{1+|s|}\right)^A. 
    \label{bound on v tild alpha}
\end{align}
\end{lemma}
\begin{proof}
This follows by induction, using \cite[Lemma 8.7]{BCL} as the base case. For details, see Appendix \ref{appen b2}.
\end{proof}


\begin{lemma}\label{lemma:their8.9}
Assume that \eqref{bound on v tild alpha} holds for each $\widetilde V_\alpha$. Let
\begin{align}
    I\ &=\ \int_{-\infty}^\infty \frac{e^{-2it}d^{2it}h_+(t)}{L(1-2it, \chi^2\chi_0,) L(1+2it,\psi_0^2\chi_0)} \prod_{\alpha^+} P_{\alpha^+}^{1/2+it}\widetilde V_{\alpha^+}\left(\frac{1}{2}+it\right)\prod_{\alpha^-} P_{\alpha^-}^{1/2-it}\widetilde V_{\alpha^-}\left(\frac{1}{2}-it\right)\nonumber \\
    &\hspace{+0.5cm} \times \prod_\beta O(P_\beta^\e(1+|v_\beta|)^2\log N)\,dt,
\end{align}
for $\prod_{\alpha^*},\prod_\beta$ finite products over at most $n$ indices in total, and at least one of $\alpha^+$ or $\alpha^-$ is a non-trivial product.  Suppose $P_i \ll Q^{\sigma_i - \delta_i}$ for $1 \leq i \leq n$ and $\{\sigma_i \colon 1 \leq i \leq n\}$ satisfies Condition \eqref{cond:bipart}, where we recall $\supp(\Phi_i) \subseteq (-\sigma_i,\sigma_i)$ and therefore for some $\delta_i > 0$ we have $\supp(\Phi_i) \subseteq (-(\sigma_i-\delta_i),(\sigma_i-\delta_i))$. Then for $\delta^* = \frac14 \sum_i \delta_i$, we have that 
\begin{align}
    I \ &\ll \ Q^{1 - \delta^*}q_0^\e \left(\frac{e}{d}\right)^{-1/2+2\e}(1+|u|)\min\left\{X^{k-3/2+2\e},\frac{1}{\sqrt{X}}\right\} \prod_{\alpha}(1+|v_\alpha|)^2\prod_{\beta}\left((1+|v_\beta|)^2\log N\right),
\end{align}
where $q_0$ is the conductor of $\chi^2\chi_0$.
\end{lemma}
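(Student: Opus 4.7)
The plan is to bound $I$ via a contour shift, extending the argument of Lemma 8.9 of \cite{BCL} to the present setting in which the exponents $\pm it$ appear with genuinely mixed signs across the different $P$-factors. Under RH for Dirichlet $L$-functions, the integrand is holomorphic in the horizontal strip $|\Im t|<1/4$ away from the zero locus of $L(1\pm 2is,\,\cdot\,)$ on the critical line, so I may move the line of integration to $\Im t=\mp\eta$ for any $\eta\in(0,1/4)$.

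On such a shifted contour, three ingredients combine: Lemma \ref{lemma:their3.3}(c) yields $h_+(t\mp i\eta)\ll (1+|u|)\min\{X^{k-1-2\eta},X^{-1/2}\}$; the GRH convexity bound gives $L(1\pm 2i(t\mp i\eta),\,\cdot\,)^{-1}\ll q_0^\e(1+|t|)^\e$; and the rapid decay of $\widetilde V_\alpha$ from Lemma \ref{lemma:8their8.7} renders the $t$-integral absolutely convergent. After the shift, each $P_{\alpha^+}^{1/2+it}$ has modulus $P_{\alpha^+}^{1/2\mp\eta}$, each $P_{\alpha^-}^{1/2-it}$ has modulus $P_{\alpha^-}^{1/2\pm\eta}$, and the oscillatory factor $e^{-2it}d^{2it}$ acquires magnitude $(d/e)^{\pm 2\eta}$. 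Choosing $\eta=1/4-\e$ identifies $\min\{X^{k-1-2\eta},X^{-1/2}\}$ with the Bessel factor in the statement and absorbs $(d/e)^{\pm 2\eta}$ into the target factor $(e/d)^{-1/2+2\e}$.

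The decisive step is selecting the direction of shift to leverage Condition \eqref{cond:bipart}. Taking $A=\{\alpha^+\}$ and $B=\{\alpha^-\}$, and using $P_i\ll Q^{\sigma_i-\delta_i}$, the two possible shifts produce total $P$-weights
\begin{align*}
    \prod_{\alpha^+}P_{\alpha^+}^{1/4+\e}\prod_{\alpha^-}P_{\alpha^-}^{3/4-\e}\quad\text{and}\quad\prod_{\alpha^+}P_{\alpha^+}^{3/4-\e}\prod_{\alpha^-}P_{\alpha^-}^{1/4+\e},
\end{align*}
whose $Q$-exponents are at most $\tfrac14\sum_{A}\sigma_i+\tfrac34\sum_{B}\sigma_i$ and $\tfrac34\sum_{A}\sigma_i+\tfrac14\sum_{B}\sigma_i$ up to $O(\e)$, respectively, each diminished by a savings of at least $\delta^*=\tfrac14\sum_i\delta_i$ coming from the $\delta_i$ in $P_i\ll Q^{\sigma_i-\delta_i}$. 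Condition \eqref{cond:bipart} is precisely the statement that at least one of these exponents is $\le 1$, so the appropriate shift produces the required $Q^{1-\delta^*}$ bound; the contribution of the $\prod_\beta$ factors is only $Q^{O(\e)}$ and so does not affect the combinatorics.

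The main obstacle is exactly the phenomenon highlighted in Remark \ref{remark:8bad1}: the various admissible values of $\ol d$ in the Fourier expansion of $\varphi_\fc(\fn,t)$ produce every possible sign assignment on the primes in $\fq$, so no single contour shift uniformly shrinks all the $P$-factors. The bipartite hypothesis is the minimal combinatorial condition guaranteeing that one of the two available shifts is always safe, and the fact that it is genuinely weaker when the $\sigma_i$ are permitted to differ is precisely what yields the improved two-level support in Corollary \ref{cor:twolevel}. The remaining verifications---absolute convergence in $t$, the $q_0^\e$ loss from the $L$-functions on the shifted line, and the matching of $(d/e)^{\pm 2\eta}$ against $(e/d)^{-1/2+2\e}$ using $d\mid m\mid e^2$---follow the corresponding steps in \cite[Section 8]{BCL} mutatis mutandis.
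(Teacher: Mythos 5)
Your proposal is correct and follows essentially the same approach as the paper: shift the $t$-contour by $\pm(1/4-\e)$ (the paper phrases this as the substitution $s=1/2\mp it$ followed by a shift to $\Re(s)=1/4+\e$), use Lemma \ref{lemma:their3.3}(c), the GRH bounds for $1/L$, and the decay of $\widetilde V_\alpha$ on the shifted line, and invoke Condition \eqref{cond:bipart} for the chosen partition to pick the shift direction that keeps the total $P$-weight at most $Q^{1-\delta^*}$. The labeling of $A$ and $B$ is the reverse of the paper's, but since Condition \eqref{cond:bipart} is symmetric in $A$ and $B$ this is immaterial.
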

\begin{proof}
Recall the notation of Condition \eqref{cond:bipart}. We now describe the sets $\mathfrak{A}$ and $\mathfrak{B}$ which we will use in this proof. 
Place the indices of primes in $\alpha^+$ and $\beta$ in $\mathfrak A$ and the indices of primes in $\alpha^-$ in $\mathfrak B$. We write $\alpha^*$ to contain both $\alpha^+$ and $\alpha^-.$
Since the case in which all primes are in $\prod_{\alpha^*}$ occurs in our application of this lemma, and since this term dominates, we reduce to this case. 

Since Condition \eqref{cond:bipart} holds for this partition, for simplicity we assume
\begin{align}
    \sum_{i\in \mathfrak{A}}\sigma_i+3\sum_{j\in \mathfrak{B}}\sigma_j\ \leq\ 4.\nonumber
\end{align}
Thus, taking the change of variables $s=1/2-it$ (if the other case of Condition \eqref{cond:bipart} holds, take $s=1/2+it$), we obtain
\begin{align}
    I\ &=\ i\int_{(1/2)} \left(\frac{e}{d}\right)^{2s-1}\frac{1}{L(2s,\chi^2\chi_0) L(2-2s,\psi_0^2\chi_0)}
    \prod_{\alpha^-} P_{\alpha^-}^{s}\widetilde V_{\alpha^-}\left(s\right)\prod_{\alpha^+} P_{\alpha^+}^{1-s}\widetilde V_{\alpha^+}\left(1-s\right)\nonumber \\
    &\hspace{+0.5cm} \times h_+(i(s-1/2))\prod_\beta O(P_\beta^\e(1+|v_\beta|)^2\log N)\,ds.
\end{align}
We move the line of integration to $\Re(s)=1/4+\e$.
Then, we use the bound on $h_+(z)$ given in Lemma \ref{lemma:their3.3}, and the bound
    \begin{align}
        L\left(\frac{1}{2} + \e + it, \chi^2\chi_0\right)^{-1}\ \ll\ (q_0(1 + |t|))^\e
    \end{align}
from \cite[Theorem 1]{bracket9}, so that after taking absolute values, we obtain
    \begin{align}
        I\ &\ll\ \left(\frac{e}{d}\right)^{-1/2+2\e}(1 + |u|)\min\left\{X^{k-3/2+2\e}, \frac{1}{\sqrt{X}}\right\} \prod_{\alpha^-}P_{\alpha^-}^{1/4 + \e}\prod_{\alpha^+}P_{\alpha^+}^{3/4 + \e}\prod_\beta \left(P_\beta^\e(1+|v_\beta|)^2\log N\right)\nonumber\\
        &\hspace{+.5cm}\times \int_{-\infty}^\infty (q_0(1 + |t|))^\e\prod_{\alpha^-}\left|\widetilde{V}_{\alpha^-}\left(\frac{1}{4} + \e + it\right)\right|\prod_{\alpha^+}\left|\widetilde{V}_{\alpha^+}\left(\frac{3}{4} - \e - it\right)\right|\,dt.
    \end{align}
Recalling that $P_\beta \ll Q^{\sigma_i - \delta_i}$ and using the bound \eqref{bound on v tild alpha} on $\widetilde{V}_{\alpha^*}$ with $A=2$, we have that
    \begin{align}
        I \ &\ll \ Q^\e q_0^\e\left(\frac{e}{d}\right)^{-1/2+2\e}(1 + |u|)\min\left\{X^{k-3/2+2\e}, \frac{1}{\sqrt{X}}\right\} \prod_{\alpha^-}P_{\alpha^-}^{1/4 + \e}\prod_{\alpha^+}P_{\alpha^+}^{3/4 + \e}\nonumber \\
        &\hspace{+0.5cm} \times \prod_{\alpha}(1 + |v_\alpha|)^2\prod_\beta \left((1 + |v_\beta|)^2\log N\right).
    \end{align}
Recalling that each $P_{\alpha^*} \ll Q^{\sigma_i - \delta_i}$, we conclude
    \begin{align}
        I \ &\ll \ Q^{\frac14 \sum_{i\in \mathfrak A} (\sigma_i-\delta_i) + \frac34 \sum_{i\in \mathfrak B} (\sigma_i-\delta_i)} Q^{\e} q_0^\e\left(\frac{e}{d}\right)^{-1/2+2\e}(1 + |u|)\min\left\{X^{k-3/2+2\e}, \frac{1}{\sqrt{X}}\right\} \\
        &\hspace{+0.5cm} \times \prod_{\alpha}(1 + |v_\alpha|)^2\prod_\beta \left((1 + |v_\beta|)^2\log N\right).\nonumber \qedhere
    \end{align}
\end{proof}

\begin{remark}
    The condition on the supports $\sigma_i$ is necessary to handle the novel terms emerging from Lemma \ref{lemma:their8.2} when $n \geq 2$. Lemma \ref{lemma:their8.9} must handle all possible combinations of signs on the exponents of $\prod P_i^{\pm it}$. Write $\prod_{a \in \mathfrak{A}} P_a^{-it}\prod_{b \in \mathfrak{B}} P_b^{it}$. Based on our choice of change of variables, upon shifting the contour to $\mathrm{Re}(s) = 1/4 + \e$ we are left with either 
        \begin{align}
            \prod_{i \in \mathfrak{A}}P_i^{1/4+\e}\prod_{j \in \mathfrak{B}}P_j^{3/4 + \e}\hspace{1cm} \text{ or }\hspace{1cm} \prod_{i \in \mathfrak{A}}P_i^{3/4+\e}\prod_{j \in \mathfrak{B}}P_j^{1/4 + \e}.\nonumber
        \end{align}
    If Condition \eqref{cond:bipart} fails to hold, since each $P_i\ll Q^{\sigma_i - \delta_i}$, either choice of change of variables, $s = 1/2 - it$ or $s = 1/2 + it$, yields a factor of size $\gg Q^{1+\e}$.
\end{remark}
\begin{remark}\label{remark:8bad2}
    The argument in the above lemma cannot be pushed any further to increase the support; the contour shift would otherwise cross the critical line, and is thus a natural boundary for our treatment of Eisenstein series. See Remark \ref{remark:8bad1} as well.
\end{remark}
To bound $E_{princ}$ (recall its definition in \eqref{eprincdefn}), 
we first note that $(\prod_\fq \fq, b_0) = 1$. By \eqref{eqn:kluyvver}, we may drop the $\prod_\fq \fq$ term from the Kloosterman sum, and similarly drop it from the corresponding Euler totient function.

We move the sum over $\fq$ inside the integral and apply Lemma \ref{lemma:8their8.7}. Next, we note that for every $m$, the modulus $(b,N/b)/(m,(b,N/b))$ of $\chi$ is a factor of $(b, N/b)$. Thus, moving the sum $\sum_{a\bmod (b,N/b)}\mkern -90mu^*\mkern85mu$ inside the integral, character orthogonality allows us to reduce to the case where $\chi = \chi_\circ$ is the principal character modulo $(b,N/b)/(m, (b,N/b))$. This gives
\begin{align}
    E_{princ}\ &=\ \sum_{c\geq 1}\bsum\mathop{\sum_\fp\sum_\fo}_{p_i\neq p_j}\eightlog{i} \frac{1}{Nb_0}\int_{-\infty}^\infty e^{-2it}\left(\prod_\fp \fp \prod_\fo \fo \right)^{it} \nonumber \\
    &\hspace{.5cm} \times  \left( \prod_\fp \fp \right) \eightKloos{\prod_\fo \fo} \eightKloos{m} \nonumber \\
    &\hspace{.5cm} \times \conjdnonsenseprinc{m}{\chi}\nonumber \princnonsense{\prod_\fo \fo}{\psi_0}\nonumber\\
    &\hspace{.5cm} \times h_+(t) \prod_{\alpha^*}\left(P_{\alpha^*}^{1/2\pm_* it}\widetilde V_{\alpha^*}\left(\frac{1}{2}\pm_* it\right)+O(P_{\alpha^*}^\e(1+|v_{\alpha^*}|)^2\log N)\right) dt,\label{eqn:8beforesplt}
\end{align}
where $\alpha^*$ iterates over $\alpha^+$ and $\alpha^-$. We split into cases of all error (i.e., the term obtained after expanding the product over $\alpha^*$ that contains every big $O$), call it $E_{princ, err}$, and not all error, $E_{princ,main}$.

\begin{remark} 
The application of character orthogonality on $\chi$ eliminates the contribution from the Gauss sums $\tau(\chi)$ over non-principal characters modulo ${(b,N/b)}/{(m, (b, N/b))}$, which would otherwise contribute a detrimental additional factor of magnitude $\sqrt{{(b,N/b)}/{(m, (b, N/b))}}.$
\end{remark}

\subsubsection{All error}
We consider the all error term of \eqref{eqn:8beforesplt}.
We move the remaining prime sums into the integral and apply absolute values. We first look to control the sum over the $\fo$'s. By Condition \ref{eqn:8primecondition a}, we have some combination of the sums $\fp,\fo\mid b_0$; or $\fp,\fo\mid N,\nmid b_0$. We split the sum over primes in $\fo$ into $\ff \mid b'$, $\fg \mid b_0$, and $\fh \mid N, \fh \nmid b$ as follows:
    \begin{align}
        \prod_{\fo} \fo\  =\  \prod_{\substack{\ff \mid b \\ \ff \nmid b_0}} \ff \prod_{\fg \mid b_0} \fg \prod_{\substack{\fh \mid N \\ \fh \nmid b}} \fh \ =\  \prod_{\substack{\ff \mid b'}} \ff \prod_{\fg \mid b_0} \fg \prod_{\substack{\fh \mid N \\ \fh \nmid b}} \fh.
    \end{align}
Since $b_0$ is the largest factor of $b$ coprime to $N/b$, primes in $\mathfrak f$ cannot be coprime to $N/b$ so $\left(\prod_{\mathfrak f} \mathfrak f, (b, N/b)\right) = \prod_{\mathfrak f} \mathfrak f$. We also have $\left(\prod_{\fh} \fh, (b, N/b)\right) = 1.$ Hence,
\begin{align}
    \eightKloos{\prod_\fo \fo}\ = \ S\left(\prod_{\fg} \fg, 0; b_0\right) \ \ll \ \sqrt{b_0} \prod_{\fg} \sqrt{\fg}\label{eqn:kloosshift},
\end{align}
where we used \eqref{eqn:kluyvver} and to obtain the last bound, we have used that the primes in $\mathfrak{g}$
are distinct so their product is at most $b_0$. Thus, taking absolute values, applying \eqref{eqn:kloosshift}, and dropping the condition $p_i \neq p_j$ by positivity yields
\begin{align}
    \sum_{\substack{\fo\\ p_i\neq p_j}}&\eightlog{i}\left(\prod_\fo \fo \right)^{it}S\left(\prod_{\fg} \fg, 0; b_0\right)\nonumber\\
    &\ll \ \sqrt{b_0} \sum_{\fg \mid b_0} \prod_\fg \log \fg \sum_{\ff | b'} \prod_\ff \frac{\log \ff}{\sqrt{\ff}} \sum_{\substack{\fh \mid N \\ \fh \nmid b}} \prod_\fh \frac{\log \fh}{\sqrt{\fh}}\
    \ll\ (b_0)^{1/2+\e}N^\e.\label{eqn8:osumdiescase1}
\end{align}
We now deal with the sum over $\fp$. Recall that we have $\prod_\fp \fp \mid e$. Since $e \leq  E \ll  Q^\e$, we have that
\begin{align}
    \sum_{\substack{\fp \\\ref{eqn:8primecondition a},\ref{eqn:8primecondition b},\ref{eqn:8primecondition e}}} 
    \prod_i 
    \frac{\log p_i}{\sqrt{p_i}} V\left(\frac{p_i}{P_i}\right)\mathrm{e}\left(v\frac{p_i}{P_i}\right)p_i^{it}p_i\ \ll\ Q^\e\label{eqn:8pesumdies}.
\end{align}
We are now ready to bound $E_{princ,err}$. Using the bounds on $h_+(t)$ from Lemma \ref{lemma:their3.3} and the work above (after dropping the condition on $\sum_{b|N}$ by positivity), we have that
\begin{align}
    E_{princ,err}\ &\ll\ Q^\e(1+|u|) \prod_{i=1}^n (1+|v_i|)^2 \sum_{c\geq 1}\sum_{b\mid N}\varphi((b,N/b)) \nonumber \\
    &\hspace{.5cm} \times \frac{e^{2+\e}(b_0)^{1/2+\e}}{N^{1-\e}b_0} \sqrt{(b,N/b)}F^{1+\e}\min\left\{X^{k-1},\frac{1}{\sqrt{X}}\right\}(1+|\log X|),
\end{align}
where $F<(1+|u|)(1+X)$ and we use the bounds in \eqref{eqn:8manybounds}. By $\varphi((b,N/b)) \leq N/b$, we conclude
\begin{align}
\sum_{b\mid N}\varphi((b,N/b))\frac{e^{2+\e}(b_0)^{1/2+\e}}{N^{1-\e}b_0} \sqrt{(b,N/b)}\ \ll\ N^\e \frac{\sqrt{(b,N/b)}} {b(b_0)^{1/2-\e}}\ \ll\ N^\e.
\end{align}
As such, we apply the argument following \eqref{sqrt X garbage} to the sum over $c$ (see \cite[p.~29]{BCL} for details) to obtain
\begin{align}
    E_{princ,err}\ &\ll\ Q^\e(1+|u|)^2\frac{\sqrt{\prod P_i}}{Q}\prod_{i=1}^n (1+|v_i|)^2.
\end{align}

\subsubsection{At least one main term}
We now look to bound
\begin{align}
    E_{princ,main}\ &\coloneqq\ \sum_{c\geq 1}\bsum \mathop{\sum_\fp\sum_\fo}_{p_i\neq p_j}\eightlog{i} \frac{1}{Nb_0}\int_{-\infty}^\infty e^{-2it}\left(\prod_\fp \fp \prod_\fo \fo \right)^{it} \nonumber \\
    &\hspace{.5cm} \times  \left( \prod_\fp \fp \right) \eightKloos{\prod_\fo \fo} \eightKloos{m} \nonumber \\
    &\hspace{.5cm} \times \conjdnonsenseprinc{m}{\chi}h_+(t) \prod_{\alpha^*} P_{\alpha^*}^{1/2\pm_* it}\widetilde V_{\alpha^*}\left(\frac{1}{2}\pm_* it\right) \nonumber \\
    &\hspace{.5cm} \times \princnonsense{\prod_\fo \fo}{\psi_0}  \prod_\beta O(P_\beta^\e(1+|v_\beta|)^2\log N)\,dt,
\end{align}
where the product over $\alpha^*$ is not empty.

We look to argue as in Lemma \ref{lemma:their8.9}. We move the sums over $\fp,\fo$ into the integral (including the Kloosterman sum with $\fo$ dependency) and obtain the integral in the statement of Lemma \ref{lemma:their8.9} with an additional factor of
\begin{align}
    T\ &\coloneqq\ \frac{\varphi{((b,N/b))}}{Nb_0}\mathop{\sum_\fp\sum_\fo}_{p_i\neq p_j}\eightlog{i}\left(\prod_\fp \fp \prod_\fo \fo\right)^{\pm it}\left(\prod_\fp \fp \right)\nonumber \\
    &\hspace{.5cm} \times \frac{1}{\varphi\left( \frac{(b,N/b)}{(\prod_\fo \fo, (b,N/b))} \right)}\eightKloos{\prod_\fo \fo}.
\end{align}
We now apply Lemma \ref{lemma:their8.9}. Let $T'$ be what $T$ turns into after application of this lemma. 
In the worst case, after shifting the contour as in the proof of Lemma \ref{lemma:their8.9}, we are left with terms of size $\prod_\fp \fp^{1/4 - \e} \prod_\fo \fo^{1/4 - \e}$.  We take absolute values to obtain
\begin{align}
    T'\ &\ll\ \frac{\varphi{((b,N/b))}}{Nb_0}\mathop{\sum_\fp\sum_\fo}_{p_i\neq p_j}\prod_i \frac{\log p_i}{p_i^{1/4+\e}} \left(\prod_\fp \fp \right) \frac{1}{\varphi\left( \frac{(b,N/b)}{(\prod_\fo \fo,(b,N/b))} \right)}\left|\eightKloos{\prod_\fo \fo}\right|.
\end{align}
For notational convenience, we will not write the requirement that all primes are distinct, yet we still assume this condition.
We again split the sum over primes in $\fo$ into sums over $\ff \mid b'$, $\fg \mid b_0$, and $\fh \mid N, \fh \nmid b$ to arrive at
\begin{align}
    T'\ &\ll\ \frac{\varphi{((b,N/b))}}{Nb_0}\sum_\fp \prod_\fp \frac{\log \fp}{\fp^{1/4+\e}}\fp \sum_{\substack{\ff \mid b'}} \prod_\ff \frac{\log \ff}{\ff^{1/4+\e}} \sum_{\fg \mid b_0} \prod_\fg \frac{\log \fg}{\fg^{1/4+\e}} \sum_{\substack{\fh \mid N \\ \fh \nmid b}} \prod_\fh \frac{\log \fh}{\fh^{1/4+\e}}\nonumber \\
    &\hspace{+0.5cm} \times \frac{1}{\varphi\left( \frac{(b,N/b)}{(\prod \ff, (b,N/b))} \right)}\left|S\left(\prod \fg, 0; b_0\right)\right|,
\end{align}
where have simplified the second Euler totient function because none of the primes $\fg$ or $\fh$ divide $(b, N/b)$ and we have simplified the Kloosterman sum because none of the primes $\ff$ or $\fh$ divide $b_0$. By \cite[Theorem 2.9]{mont}, $\varphi(n) \gg n^{1-\e}$. Thus,
    \begin{align}
        \frac{1}{\varphi\left( \frac{(b,N/b)}{(\prod \ff, (b,N/b))} \right)}\ \ll\ \frac{N^\e(\prod \ff, (b,N/b))}{(b,N/b)}\ \ll\ \frac{N^\e \prod \ff}{(b,N/b)}.
    \end{align}
    As such, using $b = b'b_0$ and $b'=(b,N/b)\prod_\fp \fp$, we conclude 
    \begin{align}
        \frac{1}{\varphi\left( \frac{(b,N/b)}{(\prod \ff, (b,N/b))} \right)}\frac{\varphi((b,N/b))}{Nb_0}\
        \ll\ \frac{N^\e \prod \ff}{(b,N/b)}\frac{(b,N/b)b'}{Nb}\ 
        \ll\ \frac{(b,N/b)}{N^{1-\e}b}\prod_\fp \fp\prod_\ff \ff.
    \end{align}
    We then use that the primes $\fg$ are distinct, together with the argument in \eqref{eqn:kloosshift}, to deduce
    \begin{align}
        \left|S\left(\prod_\fg \fg, 0; b_0\right)\right|\ &\ll\ (b_0)^{3/4}\left(\prod_\fg \fg \right)^{1/4}.
    \end{align}
    Therefore,
    \begin{align}
        T'\ &\ll\ Q^\e\frac{(b,N/b)(b_0)^{3/4}}{N^{1-\e}b}\sum_\fp \prod_\fp \frac{\log \fp}{\fp^{1/4+\e}} \fp^2 \sum_{\ff\mid b'}\prod_\ff \ff^{3/4-\e}\log \ff \sum_{\fg\mid b_0}\prod_\fg \log \fg \sum_{\substack{\fh\mid N\\ \fh\nmid b}} \prod_\fh \frac{\log \fh}{\fh^{1/4+\e}}.
    \end{align}
    Thus using that the $\ff$ are distinct,
        \begin{align} \label{eqn:8introb'inden}
            \sum_{\ff \mid b'} \ff^{3/4 - \e}\log \ff \ &=\ \mathop{\sum \cdots \sum}_{p_i \in \ff} \prod_i p_i^{3/4-\e} \log p_i \ \ll \  (b')^{3/4-\e}\log^{n}(b').
        \end{align}
    By similar arguments as in \eqref{eqn:8pesumdies}, the sums over primes in $\fp$ and $\fh$ are $\ll (QN)^\e$. Similarly, the sum over primes in $\fg$ is $\ll b_0^\e$. As such,
    \begin{align}
        T'\ &\ll\ Q^\e\frac{(b,N/b)(b_0)^{3/4+\e}(b')^{3/4-\e}}{N^{1-\e}b}\ \ll\ Q^\e\frac{(b,N/b)}{N^{1-\e}b^{1/4-\e}}.
    \end{align}
We recall that $q_0^\e \ll N^\e$. Therefore, we have that
\begin{align}
E_{princ,main}\ &\ll\ (1+|u|)Q^{1-\delta^*}\prod_{i=1}^{n}(1+|v_i|)^2 \sum_{c\geq 1}\sum_{b\mid N} \frac{(b,N/b)}{N^{1-\e}b^{1/4-\e}}\min\left\{X^{k-3/2+2\e},\frac{1}{\sqrt{X}}\right\},
\end{align}
after dropping the condition on $\sum_{b|N}$ by positivity. We now bound the sum over $c$ and $b \mid N$. Let $\mcF(c) \coloneqq \min\left\{X^{k-3/2+2\e},X^{-1/2}\right\}$ and put $N  = cL_1rd \coloneqq c\alpha$ where $\alpha = L_1rd \ll Q^\e$. We then have
\begin{align}
    \sum_{c\geq 1}\sum_{b\mid c\alpha}\frac{(b,N/b)}{(c\alpha)^{1-\e} b^{1/4-\e}}\mcF(c)\ &\leq\ \sum_{c \geq 1}\sum_{b_1\mid \alpha}\sum_{b_2\mid c}\frac{(b_1b_2,\frac{c\alpha}{b_1b_2})}{(c\alpha)^{1-\e}(b_1b_2)^{1/4-\e}}\mcF(c)\nonumber\\
    &\ll\ \sum_{c\geq 1} \sum_{b_2\mid c}\frac{(b_2,c/b_2)}{c^{1-\e}(b_2)^{1/4-\e}}\mcF(c)\sum_{b_1\mid \alpha}\frac{b_1(\alpha/b_1)}{(b_1)^{1/4-\e}\alpha^{1-\e}}\nonumber\\
    &\ll\ \alpha^\e \sum_{b_2} \frac{1}{(b_2)^{5/4-\e}}\sum_{c' \geq 1}\frac{(b_2,c')}{c'^{1-\e}}\mcF(c'b_2)\nonumber\\
    &\ll\ \alpha^\e \sum_{b_2} \frac{1}{(b_2)^{5/4-\e}}\sum_{\ell\mid b_2}\ell \sum_{c' \geq 1}\frac{1}{(c'\ell)^{1-\e}}\mcF(c'\ell b_2)\nonumber\\
    &\ll\ Q^\e \alpha^\e \sum_{b_2} \frac{1}{(b_2)^{5/4-\e}}\sum_{\ell\mid b_2} \frac{\ell}{\ell^{1-\e}} \nonumber \\
    &\ll\ Q^\e \sum_{b_2}\frac{(b_2)^\e}{(b_2)^{5/4-\e}}\ \ll Q^\e.
\end{align}
Hence, we conclude that
    \begin{align}
        E_{princ,main}\ \ll\ Q^{1-\delta^*+\e}(1 + |u|)\prod_{j=1}^n(1 + |v_j|)^2.
    \end{align}

The proof of Lemma \ref{lemma:8nonprinc} is straightforward, and follows by applying Lemma \ref{lemma:mostbeautifullemmaintheworld} to the sum over $\fq$, taking absolute values, and taking naive bounds. The idea is similar to \cite[\S 8.5]{BCL}. See Appendix \ref{appen b3} for details.

\addtocontents{toc}{\protect\setcounter{tocdepth}{0}} 
\section*{Acknowledgements}
\addtocontents{toc}{\protect\setcounter{tocdepth}{3}} 
This research was completed at the Williams College SMALL REU Program and was supported by Williams College and the National Science Foundation (grant DMS2241623). The authors are grateful for the support of Duke University, Princeton University, and the University of Michigan. The authors appreciate Arijit Paul and Zijie Zhou for helpful preliminary discussions. The authors would like to thank Siegfried Baluyot, Vorrapan Chandee, Xiannian Li, Wenzhi Luo, and Jesse Thorner for helpful discussions. 
\appendix

\section{Proof of Lemmas in Theorem \ref{onylallpairedoralldifferent}}
\label{app:combo}
In this section, we prove the base case of the induction and Lemmas \ref{lemmaforcase3ofcombo} and \ref{lemmaforcase4ofcombo}. Recall that to do this, we split into five disjoint cases based on the partition of $n$ as follows.
\setlist[enumerate, 1] 
{1., 
leftmargin  = 2em,
itemindent  = 0pt,
labelwidth  = 2em,
labelsep    = 0pt,
font        = \bfseries,
align       = left,
itemsep     = 1.5mm,
ref         = \mbox{\textbf{\arabic*}}
}
\begin{enumerate}
    \setlength\itemsep{0em}
    \item Each part is $2.$ This term contributes and as such we compute it in Lemma \ref{lemmaforcase1ofcombo}.
    \item All parts are $\geq 2$, and at least one part is $>2$.
    \item There is exactly one $1$ in the partition (and $n\geq 2$).
    \item There are at least two $1$'s in the partition, but not all $1$'s.
    \item Each part is $1.$
\end{enumerate}

\subsection{Base Cases} \label{subsec:base}
When $n=2$, we are either in Case \ref{case1ofcombo} or Case \ref{case5ofcombo}, and since these cases have been established separately of this induction, this completes the case $n = 2$. When $n=3$, we are either in Case \ref{case2ofcombo} or Case \ref{case5ofcombo}, or we have the partition $2+1=3$. Therefore, we bound the sum corresponding to this partition.

We begin with
\begin{align}
\mcS_1 \ :=\ \nonsense{0}\hkq \sum_{p_1=p_2\nmid q} \mcF(p_1, 2)
\sum_{\substack{p_3\nmid q \\ p_1\neq p_3}} \mcF(p_3, 1).
\end{align}
When $p_1=p_2=p_3$, $\mcS_1$ becomes
\begin{align}
\nonsense{0}\hkq \sum_{p\nmid q} \mcF(p, 3) \ \ll \ \frac{1}{(\log Q)^3}
\end{align}
by Lemma \ref{lemmaforcase2ofcombo}. Hence, $\mcS_1 = \mcS_2 + O((\log Q)^{-3})$, where
\begin{align}
    \mcS_2 \ &\coloneqq \ \nonsense{0} \hkq  \sum_{p\nmid q} \sum_{p'\nmid q} \frac{\lambda_f^2(p)\lambda_f(p')\log^2(p)\log(p')}{p\sqrt{p'} \log^3(q)} \widehat\Phi^2\left(\frac{\log p}{\log q}\right)
    \widehat\Phi\left(\frac{\log p'}{\log q}\right).
\end{align}
Since $f$ is a newform, $\lambda_f(p)^2 = \lambda_f(p^2) + 1$, and thus
\begin{align}
    \mcS_2 \ &= \nonsense{0}\hkq \sum_{p\nmid q} \sum_{p'\nmid q} \frac{\lambda_f(p^2)\lambda_f(p')\log^2(p)\log(p')}{p\sqrt{p'} \log^3(q)} \widehat\Phi^2\left(\frac{\log p}{\log q}\right)
    \widehat\Phi\left(\frac{\log p'}{\log q}\right)\nonumber\\
    &\hspace{.5cm}+\nonsense{0}\hkq  \sum_{p\nmid q} \sum_{p'\nmid q} \frac{\lambda_f(p')\log^2(p)\log(p')}{p\sqrt{p'} \log^3(q)} \widehat\Phi^2\left(\frac{\log p}{\log q}\right)
    \widehat\Phi\left(\frac{\log p'}{\log q}\right).
\end{align}
We name these sums $\mcS_3$ and $\mcS_4$, respectively. 

To bound $\mcS_4,$ we evaluate the sum over $p$ using Lemma \ref{ten min int} and then apply the case $n=1$, i.e., \cite[Proposition 4.2]{BCL}, to the resulting sum. We conclude that
\begin{align}
    \mcS_4 &\ll \left(\int_{-\infty}^\infty |u|\Phi^2(u)\,du\right) \nonsense{0}\hkq  \sum_{p'\nmid q} \frac{\lambda_f(p') \log(p')}{\sqrt{p'} \log q} \nonumber
    \widehat\Phi\left(\frac{\log p'}{\log q}\right)\\
    &\ll \ \left(\int_{-\infty}^\infty |u|\Phi^2(u)\,du\right)\frac{1}{\log Q} \ \ll \ \frac{1}{\log Q}.
\end{align}
We now bound $\mcS_3.$ Taking absolute values yields
\begin{align}
    \mcS_3 \ &\leq \   \sum_{q=1}^{aQ}  \hkq \left| \frac{1}{\sqrt{N(Q)}}\sqrt{\Psi\left(\frac qQ\right)} \sum_{p\nmid q} 
    \frac{\lambda_f(p^2)\log^2(p)}{p \log^2(q)}\widehat\Phi^2\left(\frac{\log p}{\log q}\right)\right|\nonumber\\
    &\hspace{.5cm}\times \left| \frac{1}{\sqrt{N(Q)}}\sqrt{\Psi\left(\frac qQ\right)}\sum_{p'\nmid q} \frac{\lambda_f(p')\log(p')}{\sqrt{p'} \log(q)} 
    \widehat\Phi\left(\frac{\log p'}{\log q}\right)\right|\label{eqn:n=3case},
\end{align}
where $\Psi$ is supported on $(0, a)$. Viewing the sum over $q$ and the sum over $f$ as a single sum, we now apply Cauchy-Schwartz to obtain
\begin{align}
    \mcS_3 \ &\leq \ \left| \sum_{q=1}^{aQ}  \hkq\frac{1}{N(Q)}\Psi\left(\frac qQ\right) \left(\sum_{p\nmid q} 
    \frac{\lambda_f(p^2)\log^2(p)}{p \log^2(q)}\widehat\Phi\left(\frac{\log p}{\log q}\right)^2\right)^2\right|^{1/2}\nonumber\\
    &\hspace{.5cm}\times \left| \sum_{q=1}^{aQ}  \hkq \frac{1}{N(Q)}\Psi\left(\frac qQ\right)\left(\sum_{p'\nmid q} \frac{\lambda_f(p')\log(p')}{\sqrt{p'} \log(q)}
    \widehat\Phi\left(\frac{\log p'}{\log q}\right)\right)^2\right|^{1/2}.
\end{align}
After removing the diagonal piece whose contribution is $O(1)$ by Lemma \ref{ten min int}, the second sum reduces to Proposition \ref{prop42} with $n=2$. Therefore,
\begin{align}
    \mcS_3 \ &\ll \ \left| \sum_{q=1}^{aQ}  \hkq \frac{1}{N(Q)}\Psi\left(\frac qQ\right) \left(\sum_{p\nmid q} 
    \frac{\lambda_f(p^2)\log^2(p)}{p \log^2(q)}\widehat\Phi\left(\frac{\log p}{\log q}\right)^2\right)^2\right|^{1/2}.\label{boundinglambda_f p^2 term}
\end{align}
We apply Lemma \ref{lemma:boundingpsq} with $F(t) = \widehat\Phi^2(t)$ to conclude that
\begin{align}
    \mcS_3 \ &\ll \ \left| \sum_{q=1}^{aQ}  \hkq \frac{1}{N(Q)}\Psi\left(\frac qQ\right) \left(\frac{\log \log q}{\log q}\right)^2\right|^{1/2}\ \ll\ \frac{\log\log Q}{\log Q}.
\end{align}
\hfill $\Box$

We now assume $n\geq 4$ and for the purpose of strong induction assume Theorem \ref{onylallpairedoralldifferent} holds for $1, 2, \ldots, n-1$. We complete the proof in Lemmas \ref{lemmaforcase3ofcombo} and \ref{lemmaforcase4ofcombo}.

\subsection{Proof of Lemma \texorpdfstring{\ref{lemmaforcase3ofcombo}}{}}
We recall that we assume our partition is in nonincreasing order. Because $n\geq 4$ and there is exactly one 1 in the partition, there is at least one part $\geq 3$, or at least two parts of~2. 

Suppose there is a part $\geq 3$. Then by Lemma \ref{lemma:mostbeautifullemmaintheworld} and Lemma \ref{bounding a=2}, we deduce
\begin{align}
    \mcS\ &= \ \nonsense{0} \hkq\mathop{\sum\cdots\sum}_{\substack{p_1,\ldots, p_\ell \nmid q\\ p_i\neq p_j}} \prod_{i=1}^\ell \mcF(p_i, a_i) \nonumber \\
    &\ll\ \nonsense{0} \hkq\sum_{p_1\nmid q} \left|\mcF(p_1, a_1)\right| \sum_{p_2\nmid qp_1} \left|\mcF(p_2,a_2)\right|\cdots \left|\sum_{p_\ell \nmid qp_1\cdots p_{\ell-1}} \mcF(p_\ell, a_\ell)\right|\nonumber\\
    &\ll\ \nonsense{0} \hkq\frac{1}{(\log q)^3} \frac{(\log q)^{2+\e}}{\log q} \ = \ O\left(\frac{1}{\log Q}\right).
\end{align}

Now, suppose there is exactly one part 1 and every other part is 2. Explicitly, 
\begin{align}
    \mcS\ &=\ \nonsense{0} \hkq
    \sum_{p_1\nmid q}\sum_{p_2\nmid p_1q} \mathop{\sum\cdots\sum}_{\substack{p_3,\ldots, p_{\ell-1} \nmid p_1p_2q\\ p_i\neq p_j}}\prod_{i=1}^{\ell-1} \mcF(p_i, 2) \sum_{p_\ell \nmid qp_1\cdots p_{\ell-1}} \mcF(p_\ell, 1).
\end{align}
We now induct on $\ell.$ The base case $\ell = 2$ is handled in the $n=3$ case (see Appendix \ref{subsec:base}).

Now, we assume the statement holds for $\ell-1\geq 2$. If $p_1 = p_\ell$, then this reduces to Case \ref{case2ofcombo}. If $p_1 = p_i$ for some $1 < i < \ell,$ then this reduces to at least one $a_i \geq 3$. Hence, we may add back when $p_1$ equals any other $p_i$. The same idea holds with $p_2$. Since $\ell > 2$, we now have 
\begin{align}
    \mcS\ &=\ O((\log Q)^{-3})+ \nonsense{0} \hkq
    \sum_{p_1\nmid q}\sum_{p_2\nmid q}
    \mathop{\sum\cdots\sum}_{\substack{p_3,\ldots, p_{\ell-1} \nmid q\\ p_i\neq p_j}} \prod_{i=1}^{\ell-1} \mcF(p_i, 2) \sum_{p_\ell \nmid qp_3\cdots p_{\ell-1}} \mcF(p_\ell, 1).   
\end{align}
We apply Lemma \ref{bounding a=2} for $\mcF(p_1,2)$ and $\mcF(p_2,2)$ and write $\mcF(p_i,2)=C_i+G_i$, where $C_i$ is the integral and $G_i \ll \log\log q/\log q$. Expanding out $(C_1+G_1)(C_2+G_2)$ splits $\mcS$ into 4 sums: $C_1C_2$; $C_1G_2$ and $C_2G_1$; and $G_1G_2$. The first case is trivial; since $C_1C_2$ is a constant independent of $f$, this reduces to Case \ref{case5ofcombo} or the inductive hypothesis. 

To bound the $C_1G_2$ term, we add back the $C_1C_2$ term so that it suffices to show the $C_1(C_2 + G_2)$ term is $o(1)$. By similar argument as before, we may add back the condition that $p_i\nmid p_2$ for $3\leq i\leq \ell$. 
Since this difference is $o(1)$, we appeal to the inductive hypothesis, completing this case. The same idea holds for $C_2G_1.$

Finally, we bound the $G_1G_2$ sum. By  Lemma \ref{lemma:mostbeautifullemmaintheworld} and Lemma \ref{bounding a=2}, we have that
\begin{align}
    \nonsense{0} \hkq
    G_1G_2 \mathop{ \sum\cdots \sum}_{\substack{p_3,\ldots, p_{\ell-1} \nmid q\\ p_i\neq p_j}} &\prod_{i=3}^{\ell-1} \mcF(p_i, 2) \sum_{p_\ell \nmid qp_3\cdots p_{\ell-1}} \mcF(p_\ell, 1) \nonumber \\
    \ll\ \nonsense{0} \hkq
    &\frac{(\log\log q)^2}{(\log q)^2} \mathop{\sum\cdots\sum}_{\substack{p_3,\ldots, p_{\ell-1} \nmid q\\ p_i\neq p_j}} \prod_{i=3}^{\ell-1} \left|\mcF(p_i, 2) \right| \frac{\log^{2+\e}(q)}{\log q} \label{abs in lemma2.9}\nonumber\\
    \ll\ \nonsense{0} \hkq
    &\frac{(\log\log q)^2}{(\log q)^2} \log^{1+\e}(q) \ \ll \ \frac{(\log Q)^\e}{\log Q},
\end{align}
completing the induction and the proof of Lemma \ref{lemmaforcase3ofcombo}. \hfill $\Box$
 
\subsection{Proof of Lemma \texorpdfstring{\ref{lemmaforcase4ofcombo}}{}}
We recall our partition is has at least two $1$'s in the partition, but it is not all $1$.

We first recall the following generalization of H\"older's inequality, which is essential for our analysis of these terms, and a lemma which allows us to apply this.
\begin{lemma}\label{holder}
For $\lambda_i \in \R^+$ with $\lambda_a+\cdots +\lambda_z=1$, then 
\begin{align}
    \sum_{k=1}^n |a_k|^{\lambda_a}\cdots |z_k|^{\lambda_z}\ \leq\ \left(\sum_{k=1}^n |a_k|\right)^{\lambda_a}\cdots \left(\sum_{k=1}^n |z_k|\right)^{\lambda_z}.
\end{align}
\end{lemma}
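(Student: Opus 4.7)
The plan is to reduce this statement to the weighted arithmetic--geometric mean inequality, which is the standard path for proving multi-index H\"older. First, I would recall (or quickly derive from Jensen's inequality applied to the concave function $\log$) the following weighted AM--GM statement: for positive reals $x_1,\ldots,x_r$ and weights $\lambda_1,\ldots,\lambda_r > 0$ summing to $1$, one has $\prod_i x_i^{\lambda_i} \leq \sum_i \lambda_i x_i$.

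With that tool in hand, the main step is a normalization. Writing $A \coloneqq \sum_k |a_k|$ and similarly $B,\ldots,Z$ for the other sequences appearing on the right-hand side, I would reduce to the case where every such column-sum equals $1$ by dividing through, handling the degenerate case in which some column-sum is zero separately (both sides then vanish identically). In the normalized regime, with $\tilde a_k \coloneqq |a_k|/A$, etc., the claim becomes
\begin{align}
\sum_{k=1}^n \tilde a_k^{\lambda_a}\cdots \tilde z_k^{\lambda_z} \ \leq \ 1,
\end{align}
and multiplying both sides of this by $A^{\lambda_a}\cdots Z^{\lambda_z}$ recovers the full statement.

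The final step is to apply weighted AM--GM termwise in $k$, then sum. Termwise, for each fixed $k$, the inequality reads $\tilde a_k^{\lambda_a}\cdots \tilde z_k^{\lambda_z} \leq \lambda_a \tilde a_k + \cdots + \lambda_z \tilde z_k$. Summing over $k$ and invoking both $\sum_k \tilde a_k = 1$ (and similarly for the other normalized sequences) together with $\lambda_a+\cdots+\lambda_z = 1$ caps the total at $1$, finishing the proof.

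There is no real obstacle here: this is a classical result and the plan above is essentially the textbook proof. An alternative would be to induct on the number of factors starting from the two-variable H\"older inequality $\|fg\|_1 \leq \|f\|_p\|g\|_q$ with $1/p+1/q = 1$, absorbing two factors at a time with appropriately chosen conjugate exponents, but the AM--GM route is cleaner because it avoids repeatedly rewriting the exponents and it naturally accommodates an arbitrary number of weights $\lambda_i$.
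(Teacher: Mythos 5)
Your proof is correct and is the standard normalization-plus-weighted-AM--GM argument for the finitely-many-sequences generalization of H\"older's inequality. The paper simply states Lemma \ref{holder} without proof, treating it as a known classical fact, so there is no in-paper argument to compare against; your write-up would serve as a valid self-contained justification.
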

\begin{lemma}\label{gimme xis but good this time}
    Let $m > 1.$ Then there exist  $\xi_i^{-1} \in 2\zz\cap \{1,2,\ldots, m+1\}$ with 
    $\sum_{i=1}^m \xi_i = 1$.
\end{lemma}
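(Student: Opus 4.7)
The plan is to exhibit an explicit construction for each $m>1$, splitting into cases according to the parity of $m$. Since the conclusion only asserts existence, no minimization or uniqueness is needed; the task reduces to writing $1$ as a sum of $m$ unit fractions whose denominators are all even and bounded by $m+1$. I expect the even case to be essentially trivial, with the only mild obstacle being a slick choice in the odd case that respects the bound $m+1$ on the denominators.

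For $m$ even, take $\xi_i = 1/m$ for every $1 \leq i \leq m$. Then $\xi_i^{-1} = m$ is an even integer with $m \leq m+1$, and $\sum_{i=1}^m \xi_i = m \cdot (1/m) = 1$, as required.

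For $m$ odd, write $m = 2k+1$ with $k \geq 1$ and split the $m$ indices into two groups of sizes $k$ and $k+1$. On the first group assign $\xi_i = 1/(2k)$, and on the second assign $\xi_i = 1/(2(k+1))$. Then both reciprocals are even integers, and the larger one equals $2k+2 = m+1$, which meets the prescribed bound. The verification of the sum is then
\begin{align}
    \sum_{i=1}^m \xi_i \ = \ \frac{k}{2k} + \frac{k+1}{2(k+1)} \ = \ \frac{1}{2} + \frac{1}{2} \ = \ 1.
\end{align}
The hardest step is really just spotting this $1/2 + 1/2$ decomposition in the odd case, which is what forces the bound $m+1$ to be tight; one could alternatively argue by induction on $m$ using the identity $1/d = 1/(2d) + 1/(2d)$ to increase the number of summands by one while at most doubling the largest denominator, but the direct construction above is cleaner and immediately gives the sharp bound $\xi_i^{-1} \leq m+1$ needed for later use in Lemma \ref{lemmaforcase4ofcombo}.
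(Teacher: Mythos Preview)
Your proof is correct and takes essentially the same approach as the paper: the paper's construction chooses $\lfloor m/2\rfloor$-many $\xi_i=(2\lfloor m/2\rfloor)^{-1}$ and $\lceil m/2\rceil$-many $\xi_i=(2\lceil m/2\rceil)^{-1}$, which is exactly your parity split written in unified floor/ceiling notation.
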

\begin{proof}
    Take $\lfloor m/2\rfloor$-many $\xi_i=(2\lfloor m/2\rfloor)^{-1}$ and $\lceil m/2\rceil$-many $\xi_i=(2\lceil m/2\rceil)^{-1}.$ 
\end{proof}

We induct on $\ell$, the number of parts in our partition. Our base case is $\ell = 1$ and is vacuous. Let there be $k \geq 2$ many $1$'s in our partition and $k'-k$ many $2$'s.

We add back in the primes for $p_i = p_j$ when $a_i + a_j > 2$. 
If there is only one part of $1$ remaining, we are in Case \ref{case3ofcombo} and if there are two parts of $1$ remaining, we have $\ell-1$ parts and we apply the inductive hypothesis. 
Hence, it suffices to bound
\begin{align}
    \nonsense{0} \hkq\mathop{\sum\cdots\sum}_{\substack{p_1,\ldots, p_\ell \nmid q\\ i,j > \ell - k \implies \\ p_i\neq p_j}} \prod_{i=1}^\ell \mcF(p_i, a_i).
\end{align}
Similar to the previous subsection, we apply Lemma \ref{bounding a=2} to each of the $(k'-k)$-many $a_i = 2$. As before, we first handle the all-integral terms which have a sum over at most $n-2$ primes using the inductive hypothesis. For each of the remaining sums, we denote by $\mcD(Q)$ the product of the sum of the not-all-integral terms from Lemma \ref{bounding a=2} and the sums with $a_i > 2$. Since $k<\ell$,
\begin{align}
    \mcD(Q) \ \ll \ \left(\frac{\log\log Q}{\log Q}\right)^{\mathbf{1}_{k'>k}} (\log Q)^{-\sum_{i=1}^{\ell-k'} a_i} \ \ll \ \frac{1}{(\log Q)^{1-\e}}.
\end{align} 
It thus suffices to bound
\begin{align}
    \mcS'\ &\coloneqq \ \nonsense{0} \hkq \mcD(Q) \mathop{\sum\cdots\sum}_{\substack{p_{\ell-k+1},\ldots, p_\ell \nmid q \\ p_i\neq p_j}} \prod_{i=\ell-k+1}^\ell \mcF(p_i, 1).
\end{align}
For a given $p_i$ and each $\ell - k < j \leq \ell$ and $j \neq i$, we add back the sum over $p_j = p_i$. Morally, to show the terms we are adding are neglible, it suffices to show the innermost sum when averaged is $O(1)$ since the $\mcD(Q)$ term gives us decay once we take absolute values. This is formalized in Lemma \ref{lemma:removingdistinctness}. From there, we bound the resulting sum in Lemma \ref{lemma:boundingwhenindep}.

\begin{lemma} \label{lemma:boundingwhenindep}
    For each $m\geq 2,$ we have
    \begin{align}
        \frac1{N(Q)}\sum_{q=1}^{aQ}  \Psi\left(\frac qQ\right) \hkq\mcD(Q) \mathop{\sum\cdots\sum}_{p_1,\ldots, p_m \nmid q} \prod_{i=1}^m \frac{\lambda_f(p_i)\log p_i}{\sqrt {p_i}\log q}\Phi\left(\frac{\log p}{\log q}\right)\ =\ O((\log Q)^{\e-1}). 
    \end{align}
\end{lemma}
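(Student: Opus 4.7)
The proof plan rests on a trivial factorization. Since no distinctness condition is imposed on the primes $p_1, \ldots, p_m$ in $\mcS''$, the inner $m$-fold sum splits completely: setting
\[
    T(f, q) \ \coloneqq \ \sum_{p \nmid q} \frac{\lambda_f(p)\log p}{\sqrt p\,\log q}\Phi\!\left(\frac{\log p}{\log q}\right),
\]
we have
\[
    \mathop{\sum \cdots \sum}_{p_1,\ldots,p_m \nmid q}\prod_{i=1}^m \frac{\lambda_f(p_i)\log p_i}{\sqrt{p_i}\log q}\Phi\!\left(\frac{\log p_i}{\log q}\right) \ = \ T(f,q)^m.
\]
The quantity $T(f,q)$ coincides with $\mathscr{P}(f;\Phi)$ from Section \ref{sec3} after the harmless substitution of $\Phi$ for $\widehat\Phi$. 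Consequently
\[
    \mcS'' \ = \ \mcD(Q) \cdot \frac{1}{N(Q)}\sum_q \Psi(q/Q) \hkq T(f,q)^m,
\]
and since $\mcD(Q) \ll (\log Q)^{-1+\e}$ it is enough to bound the finite-$Q$ $m$-th moment $M_m(Q) \coloneqq \frac{1}{N(Q)}\sum_q \Psi(q/Q) \hkq T(f,q)^m$ by $O(1)$.

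The cases $m = 0$ and $m = 1$ are immediate: the former gives $\mcS'' = \mcD(Q) = o(1)$ trivially, and the latter follows from Theorem 1.1 of \cite{BCL} (the base case $n = 1$ of Theorem \ref{onylallpairedoralldifferent}). For $m \geq 2$, invoke Lemma \ref{gimme xis but good this time} to obtain weights $\xi_1, \ldots, \xi_m > 0$ with $\sum \xi_i = 1$ and each $\xi_i^{-1}$ an even positive integer at most $m + 1$. Viewing $\Psi(q/Q)/N(Q)$ together with the non-negative Petersson harmonic weights as a non-negative measure on pairs $(q, f)$, the generalized H\"older inequality of Lemma \ref{holder} gives
\[
    \frac{1}{N(Q)}\sum_q \Psi(q/Q)\hkq |T(f,q)|^m \ \leq \ \prod_{i=1}^m\left(\frac{1}{N(Q)}\sum_q \Psi(q/Q)\hkq |T(f,q)|^{\xi_i^{-1}}\right)^{\xi_i}.
\]
Each $\xi_i^{-1}$ is even, so the absolute values on the right may be dropped, leaving a product of raw moments of $T$.

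In the setting of Case \ref{case4ofcombo}, the partition of $n$ contains at least one part of size $\geq 2$ in addition to the $m$ ones, forcing $m \leq n - 2$ and hence $\xi_i^{-1} \leq m + 1 \leq n - 1$. Since the support hypotheses \eqref{cond:smallsigma}--\eqref{cond:bipart} at level $n$ imply the analogous hypotheses at every smaller level, the inductive hypothesis of Theorem \ref{onylallpairedoralldifferent} applies at $n' = \xi_i^{-1}$ with all test functions equal to $\Phi$; combined with $\langle \mathscr{P}\rangle_* = O(1)$ from the $n = 1$ case and the binomial expansion of $T^{\xi_i^{-1}}$ about $\langle T\rangle$, this evaluates each raw moment $\frac{1}{N(Q)}\sum_q \Psi(q/Q)\hkq T(f,q)^{\xi_i^{-1}}$ to a bounded constant. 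Taking the product bounds $M_m(Q)$ by $O(1)$, and hence $\mcS'' = o(1)$.

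The one technical point worth highlighting is that the inductive hypothesis, stated as a $Q \to \infty$ limit, must also supply an $O(1)$ bound at finite $Q$; this is automatic from the proof of the theorem itself, which produces each moment up to a remainder vanishing as a power of $1/\log Q$. Beyond this, no serious obstacle arises, because the combinatorics of Case \ref{case4ofcombo} was arranged precisely so that $m \leq n - 2$, which is exactly the condition needed for all the H\"older exponents $\xi_i^{-1}$ to be strictly less than $n$ and thus accessible to the induction.
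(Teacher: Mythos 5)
Your proof is correct and follows essentially the same approach as the paper: split $\Psi(q/Q)/N(Q)$ using the weights $\xi_i$ from Lemma \ref{gimme xis but good this time}, pass the even reciprocal exponents $\xi_i^{-1}$ inside to eliminate absolute values, apply the generalized H\"older inequality of Lemma \ref{holder} over $(q,f)$, and invoke the inductive hypothesis at level $\xi_i^{-1} \leq m+1 < n$. One small superfluity: you do not need a binomial expansion of $T^{\xi_i^{-1}}$ about $\langle T\rangle$, since Theorem \ref{onylallpairedoralldifferent} directly furnishes the raw moment $\langle \mathscr{P}^{n'}\rangle_*$ for $n'<n$ (which, up to the constant $(-2)^{n'}$ and an $O(\log\log Q/\log Q)$ error, \emph{is} the $n'$-th centered moment of the one-level density), so the H\"older factors are controlled immediately without passing through the $n=1$ case.
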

\begin{proof}
    Let the sum be $\mcS''.$ Taking the $\xi_i$ as given by Lemma \ref{gimme xis but good this time}, we have 
\begin{align}
    \mcS''\ =\ \sum_{q=1}^{aQ}\hkq\mcD(Q) \mathop{\sum\cdots\sum}_{p_1,\ldots, p_m \nmid q} \prod_{i=1}^m \frac{\lambda_f(p_i)\log p_i}{\sqrt {p_i}\log q}\Phi\left(\frac{\log p}{\log q}\right) \left(N(Q)^{-1}\Psi\left(\frac qQ\right)\right)^{\xi_i}.
\end{align}
Taking absolute values yields
\begin{align}
    \mcS''\ &\leq \ \sum_{q=1}^{aQ}\hkq|\mcD(Q)| \prod_{i=1}^m \left|\sum_{p_i\nmid q}\frac{\lambda_f(p_i)\log p_i}{\sqrt {p_i}\log q}\Phi\left(\frac{\log p}{\log q}\right)\right|\left(N(Q)^{-1}\Psi\left(\frac qQ\right)\right)^{\xi_i}\nonumber\\
    &\ll \ \frac{1}{(\log Q)^{1-\e}}\sum_{q=1}^{aQ}\hkq\prod_{i=1}^m \left|\sum_{p_i\nmid q}\frac{\lambda_f(p_i)\log p_i}{\sqrt {p_i}\log q}\Phi\left(\frac{\log p}{\log q}\right)\right|\left(N(Q)^{-1}\Psi\left(\frac qQ\right)\right)^{\xi_i}.
\end{align}
We now raise the prime sums to the $\xi_i$ and $\xi_i^{-1} \in 2\zz^+$ powers. Since the $\lambda_f(p_i)$ are real we obtain
\begin{align}
    \mcS''\ &\ll\ \frac{1}{(\log Q)^{1-\e}}\sum_{q=1}^{aQ}\hkq\prod_{i=1}^m \left(\left(\sum_{p_i\nmid q}\frac{\lambda_f(p_i)\log p_i}{\sqrt {p_i}\log q}\Phi\left(\frac{\log p}{\log q}\right)\right)^{\xi_i^{-1}} N(Q)^{-1}\Psi\left(\frac qQ\right)\right)^{\xi_i}\nonumber\\
    &\ll\ \frac{1}{(\log Q)^{1-\e}}\sum_{q=1}^{aQ}\hkq\prod_{i=1}^m \left(\left|\sum_{p_i\nmid q}\frac{\lambda_f(p_i)\log p_i}{\sqrt {p_i}\log q}\Phi\left(\frac{\log p}{\log q}\right)\right|^{\xi_i^{-1}} N(Q)^{-1}\Psi\left(\frac qQ\right)\right)^{\xi_i}.
\end{align}
We now apply Lemma \ref{holder} to the sum over $q$ and $f$ to conclude that
\begin{align}
    \mcS''\ &\ll \ \frac{1}{(\log Q)^{1-\e}} \prod_{i=1}^m   \left(\sum_{q=1}^{aQ}\hkq\left(\sum_{p_i\nmid q}\frac{\lambda_f(p_i)\log p_i}{\sqrt {p_i}\log q}\Phi\left(\frac{\log p}{\log q}\right)\right)^{\xi_i^{-1}} N(Q)^{-1}\Psi\left(\frac qQ\right)\right)^{\xi_i}.
\end{align}
Now, each term in the product is a $\xi_i^{-1}\leq k+1 \leq (n-2)+1 < n$-th centered moment. By the inductive hypothesis, each of these is finite. Hence $\mcS'' = O( (\log Q)^{\e-1}).$
\end{proof}

\begin{lemma}\label{lemma:removingdistinctness}
    Let $IH_1(s)$ be the statement: for any $k \geq 0,$ removing the distinctness condition from 
    \begin{align}
        \mcS_5 \ \coloneqq \ \frac{1}{N(Q)} \sum_{q=1}^{aQ}\Psi\left(\frac qQ\right)\hkq\mcD(Q) \sum_{p_1,\ldots,p_k \nmid q} \sum_{\substack{p_{k+1},\lodts, p_{k+s} \nmid q\\ p_i\neq p_j}} \prod_{i=1}^{k+s} \mcF(p_i,1)
    \end{align}
    is $O((\log Q)^{\e-1})$. Let $IH_2(s)$ be the statement: for any $k \geq 0$ and $j\geq 1$ we have
    \begin{align}
        \mcS_6 \ \coloneqq \ \frac{1}{N(Q)} \sum_{q=1}^{aQ}\Psi\left(\frac qQ\right) \hkq\mcD(Q) \sum_{p_1,\ldots,p_k \nmid q} \sum_{p' \nmid q} \mcF(p',j) \sum_{\substack{p_{k+1},\lodts, p_{k+s} \nmid p'q\\ p_i\neq p_j}} \prod_{i=1}^{k+s} \mcF(p_i,1)
    \end{align}
    is $O((\log Q)^{\e-1})$. If $\mcD(Q) \ll (\log Q)^{\e-1}$ independent of $f \in \mcH_k(q)$, then $IH_1(s)$ and $IH_2(s)$ are true for all $s\geq 1.$
\end{lemma}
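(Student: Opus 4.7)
The plan is to prove $IH_1(s)$ and $IH_2(s)$ simultaneously by strong induction on $s\geq 1$, exploiting the uniform decay hypothesis $\mcD(Q)\to 0$. For the base case $s=1$, $IH_1(1)$ is vacuous since the condition $p_i\neq p_j$ for $i,j\in\{k+1\}$ is empty. For $IH_2(1)$, write $\mcS_6=\mcS_6^{\mathrm{unres}}-\mcS_6^{p_{k+1}=p'}$, where $\mcS_6^{\mathrm{unres}}$ drops the condition $p_{k+1}\nmid p'$. The overlap merges into $\mcF(p',j+1)$ with $j+1\geq 2$, whose sum over $p'\nmid q$ is $O(1)$ uniformly in $f$ (by Lemma \ref{bounding a=2}---using $\lambda_f(p)^2=\lambda_f(p^2)+1$ together with Lemma \ref{lemma:boundingpsq}---or by Lemma \ref{bounding a>2}) and absorbs into a new uniform decay $\mcD'(Q)\to 0$, after which Lemma \ref{lemma:boundingwhenindep} gives $o(1)$. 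When $j\geq 2$ the same absorption reduces $\mcS_6^{\mathrm{unres}}$ to $\mcS''$ with $m=k+1$, and when $j=1$ the unrestricted sum is already of $\mcS''$ form with $m=k+2$, in both cases $o(1)$ by Lemma \ref{lemma:boundingwhenindep}.

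For the inductive step, fix $s\geq 2$ and assume $IH_1(s')$ and $IH_2(s')$ hold for every $s'<s$. For $IH_1(s)$, inclusion-exclusion over partitions of $\{k+1,\ldots,k+s\}$ writes the removed overlap as a finite linear combination of merged terms, one per non-trivial partition $\pi$. For each such $\pi$ with block sizes $b_1,\ldots,b_m$, every block of size $\geq 2$ contributes a factor $\mcF(\cdot,b_\alpha)$ whose sum over its prime variable is $O(1)$ uniformly in $f$, and therefore absorbs into a new uniform decay $\mcD'(Q)\to 0$. After absorbing every such factor, what remains is an $\mcS_5$-type sum with $k$ unrestricted primes and $s'=\#\{i:b_i=1\}<s$ distinct primes; splitting this sum into its unrestricted part (bounded by Lemma \ref{lemma:boundingwhenindep}) and its own overlap (bounded by $IH_1(s')$) shows it is $o(1)$. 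For $IH_2(s)$, remove both the distinctness in $\{p_{k+1},\ldots,p_{k+s}\}$ and the conditions $p_i\neq p'$, and apply inclusion-exclusion over partitions $\pi$ of $\{p',p_{k+1},\ldots,p_{k+s}\}$. The unrestricted term reduces, after absorbing $\sum_{p'}\mcF(p',j)=O(1)$ into $\mcD'(Q)$ when $j\geq 2$, to an instance of $\mcS''$ and is $o(1)$ by Lemma \ref{lemma:boundingwhenindep}; when $j=1$ no absorption is needed and the sum is already of $\mcS''$ form. Each non-trivial $\pi$ merges primes into $p'$'s block (producing a new $\mcF(p',j')$ with $j'\geq j$, and $j'\geq 2$ whenever $p'$'s block has size $>1$) while the remaining non-singleton blocks absorb into $\mcD'(Q)$, reducing the merged sum to an $\mcS_6$-type sum at $s'<s$ and exponent $j'\geq 1$, to which $IH_2(s')$ applies.

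The main obstacle is the bookkeeping required to recognize each merged overlap term as a valid lower-$s$ instance of $\mcS_5$ or $\mcS_6$ under a modified but still uniform decay. This rests on the crucial fact that $\sum_{p\nmid q}\mcF(p,a)=O(1)$ uniformly in $f$ for every $a\geq 2$; the case $a=2$ is the most delicate, relying on the identity $\lambda_f(p)^2=\lambda_f(p^2)+1$ together with Lemma \ref{lemma:boundingpsq}, which is itself a GRH-conditional consequence of the analytic properties of $L(s,\mathrm{sym}^2 f)$. Because the number of partitions is bounded in terms of $s$ alone, the combinatorial multiplicity contributes only a finite constant and does not interfere with the propagation of decay through the induction, so the mutual induction closes cleanly at every step.
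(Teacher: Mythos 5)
Your base case matches the paper's argument. The inductive step, however, has a genuine gap in the absorption step. You write that for each non-trivial partition $\pi$, ``every block of size $\geq 2$ contributes a factor $\mcF(\cdot,b_\alpha)$ whose sum over its prime variable is $O(1)$ uniformly in $f$, and therefore absorbs into $\mcD'(Q)$.'' For a block sum to factor out of the remaining prime sums and absorb into $\mcD'(Q)$, that block variable must not be constrained by the other blocks; this is only the case if you use the full M\"obius inversion on the partition lattice, where each term $U_\pi$ is unconstrained. But in that case the singleton blocks of $\pi$ are also unconstrained, so the claim that ``what remains is an $\mcS_5$-type sum with $s'$ distinct primes'' is false: every $U_\pi$ is already directly of the form $\mcS''$ and is handled by Lemma~\ref{lemma:boundingwhenindep}, making the induction on $s$ superfluous. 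If instead you intend the exact-$\pi$ decomposition (tuples with equality pattern exactly $\pi$), then the cross-block distinctness constraints persist, the non-singleton block sums do not factor out, and the absorption step is not justified. As written the argument is internally inconsistent; the same conflation occurs in your treatment of $IH_2(s)$, where you absorb the non-$p'$ blocks of size $\geq 2$ while still treating the singleton blocks as constrained to differ from $p'$.

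The paper sidesteps this by removing distinctness one variable at a time. Liberating $p_{k+1}$ from $\mcS_5$ leaves a term of the form $\mcS_5$ with parameters $(k+1,s-1)$ (handled by $IH_1(s-1)$), minus a single overlap term in which $p_{k+1}=p_{k+2}$ produces $\mcF(p_{k+1},2)$ and $p_{k+3},\ldots,p_{k+s}$ are still required to avoid $p_{k+1}$; this residual constraint is precisely what the auxiliary hypothesis $IH_2$ tracks, via the extra prime $p'$ with exponent $j\geq 2$. Interleaving $IH_1$ and $IH_2$ step by step guarantees that at most one cross-constraint prime appears at any stage, so the bookkeeping closes. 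To fix your version you would need either a correct M\"obius-lattice decomposition (in which case Lemma~\ref{lemma:boundingwhenindep} alone suffices and the induction on $s$ can be dropped), or a generalization of $IH_2$ that tracks several constraint primes $p'_1,\ldots,p'_r$ simultaneously.
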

\begin{proof}
The statement $IH_1(1)$ is trivially true. Next, to prove $IH_2(1)$, we observe
    \begin{align}
        \mcS_6 \ &=\ \frac{1}{N(Q)} \sum_{q=1}^{aQ}\Psi\left(\frac qQ\right) \hkq\mcD(Q) \sum_{p_1,\ldots,p_k \nmid q} \sum_{p' \nmid q} \mcF(p',j) \sum_{p_{k+1}\nmid q} \prod_{i=1}^{k+1} \mcF(p_i,1)\nonumber\\
        &\hspace{.5cm} - \frac{1}{N(Q)} \sum_{q=1}^{aQ}\Psi\left(\frac qQ\right) \hkq\mcD(Q) \sum_{p_1,\ldots,p_k \nmid q} \sum_{p' \nmid q} \mcF(p',j+1) \prod_{i=1}^{k} \mcF(p_i,1).
    \end{align}
Since $j+1\geq2,$ by Lemma \ref{bounding a>2} or \ref{bounding a=2}, we have that $\sum_{p'\nmid q}\mcF(p',j+1) = O(1)$ independent of $f$, and thus Lemma \ref{lemma:boundingwhenindep} bounds the second sum. If $j=1,$ then the first sum is $o(1)$ by Lemma \ref{lemma:boundingwhenindep} with $k+2$. Otherwise, if $j>1,$ then the sum over $p'$ is $O(1)$, so we incorporate it into $\mcD(Q)$ and apply Lemma \ref{lemma:boundingwhenindep}. We thus have shown $IH_2(1).$

We assume $IH_1$ and $IH_2$ for all $1,2,\ldots, s-1.$ We first prove $IH_1(s).$ By the same idea as \eqref{notdoingxinonsense}, we observe that 
\begin{align}
    \mcS_5\ &=\ \frac{1}{N(Q)} \sum_{q=1}^{aQ}\Psi\left(\frac qQ\right)\hkq\mcD(Q) \sum_{p_1,\ldots,p_k \nmid q}\  \sum_{p_{k+1}\nmid q}\  \sum_{\substack{p_{k+2},\lodts, p_{k+s} \nmid q\\ p_i\neq p_j}} \prod_{i=1}^{k+s} \mcF(p_i,1)\\
    &\hspace{.5cm} - (k-1) \frac{1}{N(Q)} \sum_{q=1}^{aQ}\Psi\left(\frac qQ\right)\hkq\mcD(Q) \sum_{p_1,\ldots,p_k \nmid q} 
    \sum_{\substack{p_{k+1}\nmid q\\ p_{k+1}=p_{k+2}} }
    \sum_{\substack{p_{k+3}, \lodts, p_{k+s} \nmid p_{k+1}q\\ p_i\neq p_j}} \prod_{i=1}^{k+s} \mcF(p_i,1)\nonumber.
\end{align}
Applying $IH_1(s-1)$ to the first sum removes the distinctness condition and applying $IH_2(s-2)$ with $j=2, \ p'=p_{k+1}$ shows the second is $o(1)$ (note that $IH_2(0)$ follows from Lemma \ref{lemma:boundingwhenindep}). Hence
\begin{align}
    \mcS_5\ =\ \frac{1}{N(Q)} \sum_{q=1}^{aQ}\Psi\left(\frac qQ\right)\hkq\mcD(Q) \sum_{p_1,\ldots,p_{k+1} \nmid q}  \sum_{\substack{p_{k+2},\lodts, p_{k+s} \nmid q}} \prod_{i=1}^{k+s} \mcF(p_i,1) + o(1)
\end{align}
and this is $O( (\log Q)^{\e-1})$ by Lemma \ref{lemma:boundingwhenindep}.

We now prove $IH_2(s)$, using the same idea as \eqref{notdoingxinonsense}. We add back terms where $p_\ell = p'$ for some $\ell \geq k + 1$. By renaming $\ell$ to $k+1,$ it suffices to bound
\begin{align}
    \frac{1}{N(Q)} \sum_{q=1}^{aQ}\Psi\left(\frac qQ\right) \hkq\mcD(Q) \sum_{p_1,\ldots,p_k \nmid q} \sum_{p' \nmid q} \mcF(p',j+1) \sum_{\substack{p_{k+2},\lodts, p_{k+s} \nmid p'q\\ p_i\neq p_j}} \ \prod_{\substack{i=1\\ i\neq k+1}}^{k+s} \mcF(p_i,1), 
\end{align}
which is $O((\log Q)^{\e-1})$ by $IH_2(s-1)$. It now suffices to bound
\begin{align}
    \mcS_6' \ = \ \frac{1}{N(Q)} \sum_{q=1}^{aQ}\Psi\left(\frac qQ\right) \hkq\mcD(Q) \sum_{p_1,\ldots,p_k \nmid q} \sum_{p' \nmid q} \mcF(p',j) \sum_{\substack{p_{k+1},\ldots, p_{k+s} \nmid q\\ p_i\neq p_j}} \prod_{i=1}^{k+s} \mcF(p_i,1).
\end{align}
If $j=1,$ then this is $O((\log Q)^{\e-1})$ by $IH_1(s)$ and Lemma \ref{lemma:boundingwhenindep}, and if $j>1,$ then $\sum_{p'\nmid q}\mcF(p',j) = O(1)$ and we incorporate it into the $\mcD(Q)$ term. The resulting sum is $O((\log Q)^{\e-1})$ by the same reasoning.
\end{proof}

\section{Proof of Results in Section \ref{sec:EisensteinSeries}}\label{app:8}

\subsection{Proof of Proposition \texorpdfstring{\ref{prop:8alldivide}}{} -- Contribution from the primes \texorpdfstring{$p_i\mid N$}{}}\label{appen b1}
We now prove Proposition \ref{prop:8alldivide}.
After using the parameterization given by Lemma \ref{lemma:their8.1}, we take absolute values to obtain
    \begin{align}
        \mathcal{CTN}_{\fn \mid  N} \ &\ll \ \sum_{c\geq 1}\sum_{b\mid N}\amod \mathop{\sum \cdots \sum}_{\substack{p_i \mid  N \\ p_i \neq p_j}} \prod_i \frac{\log p_i}{\sqrt{p_i}}
        \int_{-\infty}^\infty \frac{\sqrt{e^2 \prod p_i}}{\cosh(\pi t)}|\overline{\varphi}_\fc(e^2,t)||\varphi_\fc(\fn, t)||h_+(t)|\,dt \label{eqn:CTNp|Nexp}.
    \end{align}
We use \eqref{eqn:kygeneral} to bound the Fourier coefficients $\ol\varphi_\fc(e^2,t)$ and $\varphi_\fc(\fn, t)$ and obtain that
    \begin{align}
        \overline{\varphi}_\fc(e^2, t) \ &\ll \ \frac{\sqrt{\pi}e^{-1}}{|\Gamma(1/2 + it)|}\frac{\sqrt{(b, N/b)}}{\sqrt{Nb}}\frac{b'}{(b, N/b)}\left|\eightKloos{m}\right|\nonumber \\
        &\hspace{+0.5cm} \times \sum_{\substack{d \mid  m \\ (d, N/b) = 1}}\chinonsenseabs{m}{\chi}{d}.
    \end{align}
We bound the Kloosterman sum as $\leq b_0$ and use the bounds in \eqref{eqn:8manybounds} on the $L$-function and the Gauss sum. Recalling that $b = b'b_0$ and $m \leq e^2$ so that the sum over $d\mid m$ occurs $\ll \tau(m) \ll \tau(e^2) \ll e^\e$ times, we obtain that
    \begin{align}
        \overline{\varphi}_\fc(e^2, t) \ &\ll\ \frac{\sqrt{\pi} e^{-1}}{|\Gamma(1/2 - it)|}\frac{\sqrt{(b,N/b)}}{\sqrt{Nb}}\frac{b'}{(b,N/b)}b_0e^\e\sqrt{(b,N/b)} (N(1+|t|))^\e \nonumber \\
        \ &\ll\ Q^\e\frac{\sqrt{\pi}e^{-1}}{|\Gamma(1/2-it)|}\frac{b}{\sqrt{Nb}}(N(1+|t|))^\e.
    \end{align}
Using \eqref{eqn:8manybounds}, we analogously obtain
the bound above on $\varphi_\fc(\fn,t)$ with $e$ replaced with $\sqrt{\fn}$. Substituting into \eqref{eqn:CTNp|Nexp}, and using that $|\Gamma(1/2 - it)|^2 = \pi/\cosh(\pi t)$, we obtain
    \begin{align}
        \mathcal{CTN}_{\fn\mid N} \ &\ll \ \ \sum_{c\geq 1} \mathop{\sum \cdots \sum}_{\substack{p_i \mid  N \\ p_i \neq p_j}} \prod_i \frac{\log p_i}{\sqrt{p_i}}Q^\e \sum_{b \mid  N}\sum_{a \text{ mod } (b, N/b)}\mkern -33mu ^* \mkern 22mu \int_{-\infty}^\infty \frac{b}{N} (N(1+|t|))^\e|h_+(t)|\,dt. 
    \end{align}
Using that $\sum_{a\text{ mod } {(b,N/b)}}\mkern -90mu ^* \mkern 83mu 1 \leq N/b$, $\tau(N) \ll N^\e$, and dropping $p_i\neq p_j$ by positivity, we have that
    \begin{align}
        \mathcal{CTN}_{\fn \mid  N} \ &\ll \ \sum_{c\geq 1} \mathop{\sum \cdots \sum}_{p_i \mid  N} \prod_i \frac{\log p_i}{\sqrt{p_i}} (QN)^\e \int_{-\infty}^\infty (1+|t|)^\e|h_+(t)|\,dt.
    \end{align}
Since each $p_i\mid  N$, we have that
    \begin{align}
        \mathcal{CTN}_{\fn \mid  N} \ \ll \ \sum_{c \geq 1} (QN)^\e \int_{-\infty}^\infty (1+|t|)^\e|h_+(t)|\,dt.
    \end{align}
We use the bound on $h_+(t)$ from Lemma \ref{lemma:their3.3} and recall from \eqref{eqn:Xdef} that $X = \frac{4\pi L_1L_2 \sqrt{\prod_i P_i}}{cQ}$ and $N = cL_1rd$ where $L_1rd \ll Q$. We use the same argument as in \eqref{sqrt X garbage} to obtain
    \begin{align}
        \mathcal{CTN}_{\fn \mid N}\ &\ll\ Q^\e\sum_{c \geq 1}N^\e\min\left\{X^{k-1}, \frac{1}{\sqrt{X}}\right\}\int_{-\infty}^\infty (1 + |t|)^\e\frac{(1 + |\log X|)}{F^{1-\e}}\left(\frac{F}{1+|t|}\right)^2\,dt\nonumber \\
        \ &\ll \ (1 + |u|)^2 Q^\e \frac{\sqrt{\prod P_i}}{Q},
    \end{align}
where $F$ is such that $F < (1 + |u|)(1 + X)$. This completes the proof of Proposition \ref{prop:8alldivide}.

\subsection{Proof of Lemma \ref{lemma:8their8.7}}\label{appen b2}
\begin{proof}
We proceed by induction on $\kappa$, the number of $\alpha.$ Our base case is $\kappa = 1,$ and this is simply \cite[Lemma 8.7]{BCL}.
We assume the lemma for $\kappa$ and prove the lemma for $\kappa+1$. We have that 
\begin{align}
    \mathop{\sum\cdots \sum}_{\substack{p_1,\ldots, p_{\kappa+1} \nmid N\\ p_\alpha \neq p_\beta}}
    &\prod_{\alpha=1}^{\kappa+1} \frac{\log p_\alpha}{p_\alpha^{1/2\pm it}}V\left( \frac{p_\alpha}{P_\alpha} \right)\mathrm{e}\left(v_\alpha \frac{p_\alpha}{P_\alpha}\right)\nonumber\\
    =\ \sum_{p_1 \nmid N} &\frac{\log p_1}{p_1^{1/2\pm it}}V\left( \frac{p_1}{P_1} \right)\mathrm{e}\left(v_1 \frac{p_1}{P_1}\right) \mathop{\sum\cdots \sum}_{\substack{p_2,\ldots, p_{\kappa+1} \nmid Np_1\\ p_\alpha \neq p_\beta}}\prod_{\alpha=2}^{\kappa+1} \frac{\log p_\alpha}{p_\alpha^{1/2\pm it}}V\left( \frac{p_\alpha}{P_\alpha} \right)\mathrm{e}\left(v_\alpha \frac{p_\alpha}{P_\alpha}\right).
\end{align}
The sum over $p_2,\ldots,p_{\kappa + 1}$ is equal to
\begin{align}
    &=\  \prod_{\alpha=2}^{\kappa +1} \left(P_\alpha^{1/2\mp it}\widetilde V_\alpha\left(\frac{1}{2}\mp it\right)+ C'_\alpha P_\alpha^\e(1+|v_\alpha|)^2\log (Np_1)\right)\nonumber\\
    &=\  \prod_{\alpha=2}^{\kappa +1} \left(P_\alpha^{1/2\mp it}\widetilde V_\alpha\left(\frac{1}{2}\mp it\right)+ C_\alpha P_\alpha^\e(1+|v_\alpha|)^2\log N \log(P_1+3)\right),
\end{align}
where we have applied the inductive hypothesis with $Np_1$ to the sum from $\alpha = 2$ to $\kappa + 1$, and 
\begin{align}
    C_\alpha\ =\ C_{\alpha, p_1, P_2,\ldots, P_{\kappa+1}, t, v_2,\ldots, v_{\kappa+1}, \widetilde V_2,\ldots, \widetilde V_{\kappa+1}}
\end{align}
is a sequence of constants, where $||C'_\alpha||_\infty\leq  M$ is uniformly bounded independent of all variables that $C_\alpha$ is dependent on. Further, we have $||C_\alpha||_\infty \leq ||C'_\alpha||_\infty.$
We then expand this product, and split into cases based on whether the resulting term in the sum of products contains a factor of $C_\alpha$. We first bound
\begin{align}
    \mcE_{\kappa+1}\ &= \ \sum_{p_1 \nmid N} \frac{\log p_1}{p_1^{1/2\pm it}}V\left( \frac{p_1}{P_1} \right)\mathrm{e}\left(v_1 \frac{p_1}{P_1}\right) \prod_{\alpha=2}^{\kappa +1} \left(P_\alpha^{1/2\mp it}\widetilde V_\alpha\left(\frac{1}{2}\mp it\right)\right)\nonumber \\
    &= \ \prod_{\alpha=2}^{\kappa +1} \left(P_\alpha^{1/2\mp it}\widetilde V_\alpha\left(\frac{1}{2}\mp it\right)\right)\sum_{p_1 \nmid N} \frac{\log p_1}{p_1^{1/2\pm it}}V\left( \frac{p_1}{P_1} \right)\mathrm{e}\left(v_1 \frac{p_1}{P_1}\right),
\end{align}
and by \cite[Lemma 8.7]{BCL} this is
\begin{align}
    \mcE_{\kappa+1}\ = \ \left(P_1^{1/2\mp it}\widetilde V_1\left(\frac{1}{2}\mp it\right)+O(P_1^\e(1+|v_1|)^2\log N)\right)\prod_{\alpha=2}^{\kappa +1} \left(P_\alpha^{1/2\mp it}\widetilde V_\alpha\left(\frac{1}{2}\mp it\right)\right).\label{first big O}
\end{align}
For each $2\leq \alpha\leq \kappa+1$ define $\Xi_\alpha\colon\rr\to\rr$ to be a smooth, uniformly bounded by $M$ function with
\begin{align}\nonumber
    \Xi_\alpha(p_1)\ =\ C_{\alpha, p_1, P_2,\ldots, P_{\kappa+1}, t, v_2,\ldots, v_{\kappa+1}, \tilde V_2,\ldots, \tilde V_{\kappa+1}}
\end{align}
and whose derivatives up to $A$ are all bounded by $M'M$ for some constant $M' > 1$ independent of all the variables which $C_\alpha$ depends on. Without loss of generality, we assume that we are bounding
\begin{align}
    \mcE_\beta\ &=\ \sum_{p_1 \nmid N} \frac{\log p_1}{p_1^{1/2\pm it}}V\left( \frac{p_1}{P_1} \right)\mathrm{e}\left(v_1 \frac{p_1}{P_1}\right) \prod_{\alpha=2}^{\beta} P_\alpha^{1/2\mp it} \nonumber \\
    &\hspace{.5cm} \times 
    \widetilde V_\alpha\left(\frac{1}{2}\mp it\right)\prod_{\alpha=\beta+1}^{\kappa+1} \Xi_\alpha(p_1) P_\alpha^\e(1+|v_\alpha|)^2\log N \log(P_1+3)
\end{align}
for some $1\leq \beta \leq \kappa.$
Define 
\begin{align}
    V_1(x)\ =\ V\left( \frac{x}{P_1} \right)\mathrm{e}\left(v_1 \frac{x}{P_1}\right) \prod_{\alpha=\beta+1}^{\kappa+1} \Xi_\alpha(x) \log(P_1+3).
\end{align}
We observe that 
\begin{align}
    V_1(x)^{(l)}\ \ll\  (1+|v_1|)^{l}\log(P_1+3)^{\kappa+1},
\end{align}
with $\ll$ based only on $V$ and $l,$ and thus conclude that
\begin{align}
    \widetilde V_1(s)\ \ll\ P_1^\e \left(\frac{1+|v_1|}{1+|s|} \right)^A\ \ll\ Q^\e \left(\frac{1+|v_1|}{1+|s|} \right)^A.
\end{align}
Further, by the proof of \cite[Lemma 8.7]{BCL}, using our definition of $V_1(x)$, we conclude that 
\begin{align}
    \mcE_\beta\ &= \  \prod_{\alpha=2}^{\beta} P_\alpha^{1/2\mp it}\widetilde V_\alpha\left(\frac{1}{2}\mp it\right)\prod_{\alpha=\beta+1}^{\kappa+1}  P_\alpha^\e(1+|v_\alpha|)^2\log (N) (MM')^{\kappa+1}\nonumber \\
    &\hspace{.5cm} \times \left(P_1^{1/2\mp it}\widetilde V_1\left(\frac{1}{2}\mp it\right)+O(P_1^\e(1+|v_1|)^2\log N)\right).\label{second big O}
\end{align}
Summing over all $\beta$ (including $\mcE_{\kappa+1}$), we conclude that 
\begin{align}
    \Bigg|&\mathop{\sum\cdots \sum}_{\substack{p_1,\ldots, p_\kappa \nmid N\\ p_\alpha \neq p_\beta}}
    \prod_{\alpha=1}^\kappa \frac{\log p_\alpha}{p_\alpha^{1/2\pm it}}V\left( \frac{p_\alpha}{P_\alpha} \right)e\left(v_\alpha \frac{p_\alpha}{P_\alpha}\right)-\prod_{\alpha=1}^\kappa P_\alpha^{1/2\mp it}\widetilde V_\alpha\left(\frac{1}{2}\mp it\right)\Bigg|\nonumber  \\
    &\leq\ M''(MM')^{\kappa+1}\sum_{\substack{e_\alpha \in \{0,1\}\\ 1\leq \alpha\leq \kappa+1\\ \exists \alpha\colon e_\alpha = 0}} \prod_{\alpha=1}^{\kappa+1} \left(P_\alpha^{1/2\mp it}\widetilde V_\alpha\left(\frac{1}{2}\mp it\right)\right)^{e_\alpha} \Bigg(P_\alpha^\e(1+|v_\alpha|)^2\log (N)\Bigg)^{1-e_\alpha},
\end{align}
where $M''$ is the supremum of all of the big $O$ given in \eqref{first big O} and \eqref{second big O}.
Hence, the constants in the big $O$ are all bounded by $M''(MM')^{\kappa+1},$ and this completes the induction.
\end{proof}

\subsection{Proof of Lemma \ref{lemma:8nonprinc} -- The non-principal characters}\label{appen b3}
We now prove Lemma \ref{lemma:8nonprinc}.
The goal of this subsection is to bound the following sum:
\begin{align}
    E_{non-princ}\ &= \ \sum_{c\geq 1}\bsum\amod\mathop{\sum_{\fp}\sum_{\fo}\sum_{\fq}}_{p_i\neq p_j}\eightlog{i} \eightKloos{m}
    \nonumber\\
    &\hspace{.5cm} \times \int_{-\infty}^\infty e^{-2it}\left( \prod_\fp \fp \prod_\fo \fo \prod_\fq \fq \right)^{it} \left( \prod_\fp \fp \right)\eightKloos{\prod_\fo \fo \prod_\fq \fq} \frac{1}{Nb_0} \nonumber \\
    &\hspace{.5cm} \times \conjdnonsense{m}{\chi}  \\
    &\hspace{.5cm} \times \chinonsensenonprinc{\prod_\fo \fo \prod_\fq \fq }{\psi}{1} h_+(t)dt. \nonumber \label{enonprincdefn}
\end{align}
Observe that each prime in $\fq$ is coprime to $N$ and thus is also coprime to $b_0$. Hence,
\begin{align}
    \left(\prod_\fo \fo \prod_\fq \fq, (b,N/b)\right)\ = \ \left(\prod_\fo \fo, (b,N/b)\right).
\end{align}
This allows us to bring the sum over $\fq$ inside the integral and past the sum over characters. Hence,
\begin{align}
    E_{non-princ}\ &= \ \sum_{c\geq 1}\bsum\amod\mathop{\sum_{\fp}\sum_{\fo}}_{p_i\neq p_j}\eightlog{i} \frac{1}{Nb_0}\int_{-\infty}^\infty e^{-2it} \left( \prod_\fp \fp \prod_\fo \fo \right)^{it}
    \nonumber\\
    &\hspace{-.5cm} \times  \left( \prod_\fp \fp \right) \sum_{\substack{d\mid m\\ (d,N/b)=1}}d^{2it}  
     \conjchinonsense{m}{\chi}{d} \nonumber \\
    &\hspace{-.5cm} \times \eightKloos{\prod_\fo \fo }
    \frac{1}{\varphi\left(\frac{(b,N/b)}{(\prod_\fo \fo ,(b,N/b))}\right)}
    \sum_{\substack{\psi \bmod \frac{(b,N/b)}{( \prod_\fo \fo ,(b,N/b))}\\ \\ \psi \neq \psi_0}}\frac{ \psi (-\frac{\prod_\fo \fo}{( \prod_\fo \fo ,(b,N/b))} \ol{b_0} a)\tau(\ol {\psi})}{L(1+2it, \ol{{ \psi }^2}{\chi}_0)} \nonumber\\
    &\hspace{-.5cm} \times  h_+(t) \eightKloos{m} \sum_{\substack{\fq\\ p_i\neq p_j}} \prod_i \frac{\psi(p_i) \log p_i}{p_i^{1/2\pm it}} V\left(\frac{p_i}{P_i}\right)e\left(v_i\frac{p_i}{P_i}\right)
    dt.
\end{align}
\begin{remark}
Note that Lemma \ref{lemma:mostbeautifullemmaintheworld} works with the $\pm it$ independent of the $\fq$ sum.
\end{remark}
Since the primes in $\mathfrak{q}$ do not divide $N$ and the primes in $\mathfrak{p}, \mathfrak{o}$ divide $N$, the primes in $\mathfrak{q}$ are necessarily distinct from the primes in $\mathfrak{p}, \mathfrak{o}$ and consequently these two sets of sums are independent from one another. Thus, we can apply Lemma \ref{lemma:mostbeautifullemmaintheworld} to the sum over primes $\mathfrak{q}$. By Remark \ref{rem:evalsums}, the function $\Psi_i(x) = V\left(cx/X\right)\mathrm{e}\left(v_ix/X\right)$ where $V$ is supported on $[0,c]$ and $X_i = cP_i$ satisfies the conditions of Lemma \ref{lemma:mostbeautifullemmaintheworld} with $A_i \ll (1 + |v_i|)^3$. Taking absolute values, and after dropping the condition on $\sum_{b|N}$ by positivity, we conclude that 
\begin{align}
    E_{non-princ}\ &\ll \ \sum_{c\geq 1}\sum_{b\mid N}\amod\mathop{\sum_{\fp}\sum_{\fo}}_{p_i\neq p_j} \prod_ i\frac{\log p_i}{\sqrt{p_i}} \frac{1}{Nb_0} \int_{-\infty}^\infty \left( \prod_\fp \fp \right)
    \left|\eightKloos{\prod_\fo \fo } \right|
    \nonumber\\
    &\hspace{.5cm} \times \left|\eightKloos{m} \right|\nonumber \mkern-1mu\conjdnonsenseabs{m}{\chi}\nonumber \\
    &\hspace{.5cm} \times
    \frac{1}{\varphi\left(\frac{(b,N/b)}{(\prod_\fo \fo ,(b,N/b))}\right)}
    \sum_{\substack{\psi \bmod \frac{(b,N/b)}{( \prod_\fo \fo ,(b,N/b))}\\ \\ \psi \neq \psi_0}}\frac{ \left|\tau(\ol {\psi})\right|}{\left|L( 1+2it, \ol{{ \psi }^2}{\chi}_0)\right|} \left|h_+(t)\right|\\
    &\hspace{.5cm} \times \prod_{i=\ell+1}^n\Bigg[(\log(P_i + 2))^{1+\e}\log\left(\frac{(b,N/b)}{( \prod_\fo \fo ,(b,N/b))} + |t|\right) (1+\left|v_i\right|)^3 + \log\left(NP_i\right)\Bigg] \nonumber dt.
\end{align}
The second Kloosterman sum is $\ll m\ll e^2,$ and the sum over $d$ occurs at most $\tau(m)\ll m^\e\ll e^\e$ many times and further simplify using \eqref{eqn:8manybounds}, we conclude that
\begin{align}
    E_{non-princ}\ &\ll \ \sum_{c\geq 1}\sum_{b\mid N}\amod\mathop{\sum_{\fp}\sum_{\fo}}_{p_i\neq p_j}\frac{\log p_i}{\sqrt{p_i}} \frac{e^{2+\e} (NQ)^\e}{Nb_0}\int_{-\infty}^\infty  \prod_\fp \fp \nonumber \\
    &\hspace{.5cm} \times \left|\eightKloos{\prod_\fo \fo } \right|(b,N/b) (1+|t|)^\e \left|h_+(t)\right| \prod_{i=\ell+1}^n(1+|v_i|)^3 dt.
\end{align}
Each prime in the sum over $\fp$ is $\ll e \ll Q^\e,$ and the $\fo$ sums occur $\ll \tau(N) \ll N^\e$ many times. Similarly, the sum of $a$ occurs at most $(b,N/b)$ many times, and thus we obtain
\begin{align}
    E_{non-princ}\ &\ll \ \sum_{c\geq 1} \sum_{b\mid N} \frac{ (NQ)^\e}{N}\int_{-\infty}^\infty (b,N/b)^2 (1+|t|)^\e \prod_{i=\ell+1}^n (1+|v_i|)^3 \left|h_+(t)\right|\,dt \nonumber \\
    &\ll \ Q^\e \prod_{i=1}^n (1+|v_i|)^3 \sum_{c\geq 1} N^\e \int_{-\infty}^\infty  (1+|t|)^\e  \left|h_+(t)\right|\,dt,
\end{align}
where we use $(b,N/b)^2 \leq b\frac{N}b = N$. From \cite[p.~30]{BCL}, we conclude that 
\begin{align}
    E_{non-princ}\ &\ll \ Q^\e (1+|u|)^2 \frac{\sqrt{\prod P_i}}{Q} \prod_{i=1}^n (1+|v_i|)^3 .
\end{align}

\printbibliography
\end{document}